\newtheorem{thm}{Theorem}[section]
\newtheorem{lem}[thm]{Lemma}
\theoremstyle{definition}
\theoremstyle{remark}
\newtheorem{rem}[thm]{Remark}
\theoremstyle{conclusion}
\theoremstyle{question}
\numberwithin{equation}{section}
\begin{document}
\title[Critical order Hardy-H\'{e}non equations in $R^n$]{Liouville type theorems, a priori estimates and existence of solutions for critical order Hardy-H\'{e}non equations in $\mathbb{R}^{n}$}

\author{Wenxiong Chen, Wei Dai, Guolin Qin}

\address{Department of Mathematics, Yeshiva University, New York, NY, USA}
\email{wchen@yu.edu}

\address{School of Mathematics and Systems Science, Beihang University (BUAA), Beijing 100083, P. R. China, and LAGA, Universit\'{e} Paris 13 (UMR 7539), Paris, France}
\email{weidai@buaa.edu.cn}

\address{Institute of Applied Mathematics, Chinese Academy of Sciences, Beijing 100190, and University of Chinese Academy of Sciences, Beijing 100049, P. R. China}
\email{qinguolin18@mails.ucas.ac.cn}

\thanks{The first author is partially supported by the Simons Foundation Collaboration Grant for Mathematicians 245486. The second author is supported by the NNSF of China (No. 11501021), the Fundamental Research Funds for the Central Universities and the State Scholarship Fund of China (No. 201806025011).}

\begin{abstract}
In this paper, we consider the critical order Hardy-H\'{e}non equations
\begin{equation*}
  (-\Delta)^{\frac{n}{2}}u(x)=\frac{u^{p}(x)}{|x|^{a}}, \,\,\,\,\,\,\,\,\,\,\, x \, \in \,\, \mathbb{R}^{n},
\end{equation*}
where $n\geq4$ is even, $-\infty<a<n$, and $1<p<+\infty$. We first prove a Liouville theorem (Theorem \ref{Thm0}), that is, the unique nonnegative solution to this equation is $u\equiv0$. Then as an immediate application, we derive a priori estimates and hence existence of positive solutions to critical order Lane-Emden equations in bounded domains (Theorem \ref{Thm1} and \ref{Thm2}). Our results seem to be the first Liouville theorem, a priori estimates, and existence on the critical order equations in higher dimensions ($n\geq3$). Extensions to super-critical order Hardy-H\'{e}non equations and inequalities will also be included (Theorem \ref{Thm0-sc} and \ref{Thm1-sc}).
\end{abstract}
\maketitle {\small {\bf Keywords:} Critical order, Hardy-H\'{e}non equations, Liouville theorems, nonnegative solutions, super poly-harmonic properties, method of moving planes in a local way, blowing-up and re-scaling, a priori estimates, existence of solutions.  \\

{\bf 2010 MSC} Primary: 35B53; Secondary: 35B45, 35A01, 35J91.}

\section{Introduction}

In this paper, we first investigate the uniqueness of nonnegative solutions to the following critical order Hardy-H\'{e}non equations
\begin{equation}\label{PDE}\\\begin{cases}
(-\Delta)^{\frac{n}{2}}u(x)=\frac{u^{p}(x)}{|x|^{a}} \,\,\,\,\,\,\,\,\,\, \text{in} \,\,\, \mathbb{R}^{n}, \\
u(x)\geq0, \,\,\,\,\,\,\,\, x\in\mathbb{R}^{n},
\end{cases}\end{equation}
where $u\in C^{n}(\mathbb{R}^{n})$ if $-\infty<a\leq0$, $u\in C^{n}(\mathbb{R}^{n}\setminus\{0\})\cap C^{n-2}(\mathbb{R}^{n})$ if $0<a<n$, $n\geq4$ is even, and $1<p<+\infty$.

For $0<\alpha<+\infty$, PDEs of the form
\begin{equation}\label{GPDE}
  (-\Delta)^{\frac{\alpha}{2}}u(x)=\frac{u^{p}(x)}{|x|^{a}}
\end{equation}
are called the fractional order or higher order Hardy, Lane-Emden, H\'{e}non equations for $a>0$, $a=0$, $a<0$, respectively. These equations have numerous important applications in conformal geometry and Sobolev inequalities. In particular, in the case $a=0$, (\ref{GPDE}) becomes the well-known Lane-Emden equation, which models many phenomena in mathematical physics and in astrophysics.

We say that equation \eqref{GPDE} is in critical order if $\alpha=n$, is in sub-critical order if $0<\alpha<n$ and is in super-critical order if $n<\alpha<+\infty$. Being essentially different from the sub-critical order operators, in the critical order case, the fundamental solution of $(-\Delta)^{\frac{n}{2}}$ is $c_{n}\ln\frac{1}{|x-y|}$,  which changes signs. Hence the integral representation in terms of the fundamental solution cannot be deduced directly from the super poly-harmonic properties of the solutions. Liouville type theorems for equations \eqref{GPDE} (i.e., nonexistence of nontrivial nonnegative solutions) in the whole space $\mathbb{R}^n$ and in the half space $\mathbb{R}^n_+$ have been extensively studied (see \cite{BG,CD,CFY,CL,CL1,DQ1,DQ,GS,Lin,MP,P,PS,WX} and the references therein). These Liouville theorems, in conjunction with the blowing up and re-scaling arguments, are crucial in establishing a priori estimates and hence existence of positive solutions to non-variational boundary value problems for a class of elliptic equations on bounded domains or on Riemannian manifolds with boundaries (see \cite{BM,CL3,CL4,GS1,PQS}).
\medskip

{\em (i)  Subcritical Order}
\smallskip

The results concerning equation (\ref{GPDE}) in sub-critical order are too numerous, here we only list some of them.
\smallskip

{\em (a) Subcritical nonlinearity}.

When $p<\frac{n+\alpha-2a}{n-\alpha}$, we say that the nonlinearity on the right hand side of (\ref{GPDE}) is subcritical. In this case, many nonexistence results were obtained.

For $a=0$,  $\alpha=2$,  and $1<p<\frac{n+2}{n-2}$ ($:=\infty$ if $n=2$), Liouville type theorem was established by Gidas and Spruck in their celebrated article \cite{GS}. Later, the proof was remarkably simplified by Chen and Li in \cite{CL} using the Kelvin transform and the method of moving planes (see also \cite{CL1}). For $n>\alpha=4$ and $1<p<\frac{n+4}{n-4}$, Lin \cite{Lin} proved the Liouville type theorem for all the nonnegative $C^{4}(\mathbb{R}^{n})$ smooth solutions of \eqref{GPDE}. Wei and Xu \cite{WX} generalized Lin's results to the cases when $\alpha$ is an even integer between $0$ and $n$.

For general $a\neq0$, $0<\alpha<n$, $0<p<\frac{n+\alpha-2a}{n-\alpha}$ ($1<p<+\infty$ if $\alpha=n=2$), there are also lots of literatures on Liouville type theorems for fractional order or higher order Hardy-H\'{e}non equations \eqref{GPDE}, for instance, Bidaut-V\'{e}ron and Giacomini \cite{BG}, Chen and Fang \cite{CF}, Dai and Qin \cite{DQ1}, Gidas and Spruck \cite{GS}, Mitidieri and Pohozaev \cite{MP}, Phan \cite{P}, Phan and Souplet \cite{PS}, and many others. For Liouville type theorems on systems of PDEs of type \eqref{GPDE} with respect to various types of solutions (e.g., stable, radial, nonnegative, sign-changing, $\cdots$), please refer to \cite{BG,DQ,FG,M,P,PQS,S,SZ} and the references therein.
\smallskip

{\em (b) Critical nonlinearity.}

For the critical nonlinearity cases $p=\frac{n+\alpha}{n-\alpha}$ with $a=0$ and $0<\alpha<n$, the quantitative and qualitative properties of solutions to fractional order or higher order equations \eqref{GPDE} have also been widely studied.

For integer order equations, in the special case $n>\alpha=2$, equation \eqref{GPDE} becomes the well-known Yamabe problem (for related results, please see Gidas, Ni and Nirenberg \cite{GNN1,GNN}, Caffarelli, Gidas and Spruck \cite{CGS} and the references therein). For $n>\alpha=4$, Lin \cite{Lin} classified all the positive $C^{4}$ smooth solutions of \eqref{GPDE}. In \cite{WX}, among other things, Wei and Xu proved the classification results for all the positive $C^{\alpha}$ smooth solutions of \eqref{GPDE} when $\alpha\in(0,n)$ is an even integer. For $n>\alpha=3$, Dai and Qin \cite{DQ1} classified the positive $C^{3,\epsilon}_{loc}\cap\mathcal{L}_{1}$ classical solutions of \eqref{GPDE}.

For fractional order equations, in \cite{CLO}, by developing the method of moving planes in integral forms, Chen, Li, and Ou classified all the positive $L^{\frac{2n}{n-\alpha}}_{loc}$ solutions to the equivalent integral equation of  PDE \eqref{GPDE} for general $\alpha\in(0,n)$, and as a consequence, they obtained classifications for positive weak solutions to PDE \eqref{GPDE}. Subsequently, Chen, Li, and Li \cite{CLL} developed a direct method of moving planes for fractional Laplacians $(-\Delta)^{\frac{\alpha}{2}}$ with $0<\alpha<2$,  and as an immediate application, they classified all the $C^{1,1}_{loc}\cap\mathcal{L}_{\alpha}$ positive solutions to the PDE \eqref{GPDE}, where
\begin{equation}\label{0-0}
  \mathcal{L}_{\alpha}(\mathbb{R}^{n}):=\left\{f: \mathbb{R}^{n}\rightarrow\mathbb{R}\,\Big|\,\int_{\mathbb{R}^{n}}\frac{|f(x)|}{1+|x|^{n+\alpha}}dx<\infty\right\}.
\end{equation}

{\em (ii) Critical Order}
\smallskip

In the critical order case $n=\alpha$, there have been some results on the classification of the solutions when the nonlinearity is of the form $e^{nu}$.

In \cite{CL1}, Chen and Li  classified all the $C^{2}$ smooth solutions  of the equation
\begin{equation}\label{0-1}
-\Delta u=e^{2u}, \,\,\,\,\,\,\,\, x\in\mathbb{R}^{2},
\end{equation}
with finite total volume
$$\int_{\mathbb{R}^{2}}e^{2u}dx<\infty.$$

In \cite{CY}, for general integer $n$, Chang and Yang  classified the smooth solutions to the critical order equations
\begin{equation}\label{0-3}
  (-\Delta)^{\frac{n}{2}}u=(n-1)!e^{nu}
\end{equation}
under decay conditions near infinity
$$ u(x) = \log \frac{2}{1+|x|^2} + w(\xi(x))$$
for some smooth function $w$ defined on $\mathbb{S}^n$.

When $n=4$, under weaker assumptions
$$\int_{\mathbb{R}^{4}}e^{4u}dx<\infty, \,\,\,\,\,\, u(x)=o(|x|^{2}) \,\,\,\, \text{as} \,\,\,\, |x|\rightarrow\infty,$$
Lin \cite{Lin} proved the classification results for all the $C^{4}$ smooth solutions of
$$
\Delta^{2}u=6e^{4u}, \,\,\,\,\,\,\,\, x\in\mathbb{R}^{4}. \\
$$

Then Wei and Xu \cite{WX} extended Lin's results to the cases when $n$ is an even integer.

For more literatures on the quantitative and qualitative properties of solutions to fractional order or higher order conformally invariant PDE and IE problems, please refer to \cite{CD,CL1,DFHQW,DFQ,Zhu} and the references therein.
\smallskip

{\em As for equation (\ref{GPDE}) in critical order when $\alpha =n$,
one should observe that, so far there have not seen any results.}

In this paper, we establish Liouville type theorem for nonnegative classical solutions of \eqref{PDE} in critical order cases. Our theorem seems to be the first result on this problem.

\begin{thm}\label{Thm0}
Assume $n\geq4$ is even, $-\infty<a<n$, $1<p<+\infty$ and $u$ is a nonnegative solution of \eqref{PDE}. If one of the following two assumptions
\begin{equation*}
  -\infty<a\leq2+2p \,\,\,\,\,\,\,\,\,\,\,\, \text{or} \,\,\,\,\,\,\,\,\,\,\,\, u(x)=o(|x|^{2}) \,\,\,\, \text{as} \,\, |x|\rightarrow+\infty
\end{equation*}
holds, then $u\equiv0$ in $\mathbb{R}^{n}$.
\end{thm}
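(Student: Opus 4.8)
The plan is to sidestep the integral representation in terms of the sign-changing fundamental solution $c_n\ln\frac1{|x-y|}$ of $(-\Delta)^{n/2}$, and instead combine three ingredients: the super poly-harmonic property of $u$, an iterated pointwise lower bound for $u$ built from the Green's functions of balls, and a finiteness estimate coming from $u$ being superharmonic; once these are in hand, $u\equiv0$ drops out of a one-line comparison of growth rates at infinity. Throughout write $v_k:=(-\Delta)^ku$ for $0\le k\le\frac n2$, so $v_0=u\ge0$ and $v_{n/2}=u^p/|x|^a\ge0$, and let $\overline v_k(r)$ denote the average of $v_k$ over the sphere $\partial B_r$.

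\textbf{Step 1: the super poly-harmonic property $v_k\ge0$ on $\mathbb R^n$ for $1\le k\le\frac n2-1$.} I would argue by contradiction, descending from $k=\frac n2-1$ down to $k=1$ so that when level $k$ is treated all higher levels are already known to be nonnegative (in particular $-\Delta v_k=v_{k+1}\ge0$). From the radial identities $-(r^{n-1}\overline v_k')'=r^{n-1}\overline v_{k+1}$ and Jensen's inequality $\overline v_{n/2}(r)\ge r^{-a}\,\overline u(r)^p$ one checks that if $v_k(x_0)<0$ somewhere then $r^{n-1}\overline v_k'(r)=-\int_0^r s^{n-1}\overline v_{k+1}(s)\,ds\le0$, so $\overline v_k$ is nonincreasing and $\overline v_k(r)\le-\delta<0$ for all large $r$; cascading this bound down through $k$ successive double integrations then forces $\overline u(r)\gtrsim r^{2k}$ with sign $(-1)^{k+1}$. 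If $k$ is even this sign is negative, contradicting $u\ge0$. If $k$ is odd, then under the hypothesis $u(x)=o(|x|^2)$ the estimate $\overline u(r)\gtrsim r^{2k}$ with $2k\ge2$ already contradicts $\overline u(r)=o(r^2)$; while under $a\le2+2p$ one bootstraps: $\overline u(r)\gtrsim r^{2k}$ fed back through $\overline v_{n/2}(r)\ge r^{-a}\overline u(r)^p$ into the top equation yields $\overline v_{n/2-1}(r)\lesssim-r^{2kp-a+2}$, and cascading this down either produces $\overline u$ with a negative sign (again contradicting $u\ge0$) or gives $\overline u(r)\gtrsim r^{2kp-a+n}$, a strictly larger power since $a\le2+2p\le 2+2kp$ for $k\ge1$; iterating the latter alternative makes $\overline u$ grow faster than every polynomial, which is incompatible with the equation. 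This step is the one I expect to be the main obstacle: in the critical order the cascade lands exactly on the borderline power $r^{n-2}$, so that ruling out a ``bad'' level genuinely requires the hypothesis $a\le2+2p$ (or the growth assumption), which is where the precise constant $2+2p$ enters; carrying out the bootstrap carefully and closing the ``faster than every polynomial'' contradiction is the delicate part.

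\textbf{Step 2: an iterated lower bound.} For $1\le k\le\frac n2-1$ we now have $-\Delta v_k=v_{k+1}\ge0$ with $v_k\ge0$. The Green--Poisson representation of $v_k$ on $B_R$ together with nonnegativity of the Poisson kernel gives $v_k(x)\ge\int_{B_R}G_R(x,y)\,v_{k+1}(y)\,dy$; letting $R\to\infty$, so that $G_R(x,y)\uparrow c_n|x-y|^{2-n}$, monotone convergence yields $v_k(x)\ge c_n\int_{\mathbb R^n}\frac{v_{k+1}(y)}{|x-y|^{n-2}}\,dy$, with $v_{n/2}=u^p/|x|^a$. If $u\not\equiv0$, continuity gives $u\ge m_0>0$ on some ball $B_\rho(z_0)$, hence $v_{n/2-1}(x)\ge c_n\int_{B_\rho(z_0)}\frac{u^p(y)}{|y|^a|x-y|^{n-2}}\,dy\gtrsim|x|^{2-n}$ for $|x|$ large; feeding this into the previous inequality and integrating over the annulus $\{R_0<|y|<|x|/2\}$ gives $v_{n/2-2}(x)\gtrsim|x|^{4-n}$, and by induction $v_{n/2-j}(x)\gtrsim|x|^{2j-n}$, so in particular $v_1(x)\gtrsim|x|^{-2}$ for $|x|$ large.

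\textbf{Step 3: conclusion.} Since $u\ge0$ is superharmonic ($-\Delta u=v_1\ge0$), $\overline u(r)$ is nonincreasing, and integrating $-(r^{n-1}\overline u'(r))'=r^{n-1}\overline v_1(r)$ twice while using $0\le\overline u(r)\le\overline u(0)=u(0)<\infty$ gives $\int_0^\infty s\,\overline v_1(s)\,ds<\infty$, i.e. $\int_{\mathbb R^n}\frac{v_1(y)}{|y|^{n-2}}\,dy<\infty$. But $v_1(y)\gtrsim|y|^{-2}$ for large $|y|$ forces $\int_{\mathbb R^n}\frac{v_1(y)}{|y|^{n-2}}\,dy\gtrsim\int_{|y|>R}|y|^{-n}\,dy=+\infty$, a contradiction. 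Hence $u\equiv0$. (When $0<a<n$ one only needs that $u\in C^{n-2}$ near the origin makes $u$ and $v_1=-\Delta u$ finite and continuous there, and that $s^{n-1}\overline v_{n/2}(s)\sim s^{n-1-a}$ is integrable at $0$ since $a<n$, so the radial identities and the finiteness estimate remain valid.) An alternative to Steps 2--3, closer to the tools advertised in the introduction, would be to apply the method of moving planes to the system $(v_0,\dots,v_{n/2-1})$ in a local, bounded-region fashion; the route above seems more economical.
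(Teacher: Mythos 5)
Your Step 1 is where the theorem's real difficulty sits, and it has a genuine gap precisely in the hardest case. When the negative level is the top one, $k=\frac n2-1$, and the parity is bad (i.e. $\frac n2$ even, so $n\equiv 0\ (\mathrm{mod}\ 4)$, which includes $n=4$ and the model case $a=0$), your cascade only yields $\overline u(r)\geq c_j r^{\gamma_j}$ \emph{for $r$ larger than radii $R_j$ that you do not control}, with constants $c_j$ that degrade at each feedback step because every integration must start at a radius where the previous bound is valid and must absorb unknown boundary terms $\overline v_i(R_j)$, $\overline v_i{}'(R_j)$ of the lower, not-yet-signed levels. From such bounds, ``$\overline u$ grows faster than every polynomial'' is simply not a contradiction: nothing in the equation forbids super-polynomial growth of a nonnegative solution, and no finiteness statement is available at this stage (you cannot invoke your Step 3, since $-\Delta u\geq0$ is exactly what the lemma is trying to prove). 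This is why the paper's proof of its Lemma 2.1 does not run the iteration at a single center: it re-centers $\frac n2$ times so that \emph{all} intermediate averages are monotone with strict signs from $r=0$ (hence every integration starts at $0$ with no unknown boundary data), uses the scaling invariance $u_\lambda(x)=\lambda^{\frac{n-a}{p-1}}u(\lambda x)$ to make the starting constant $l_0=\widetilde u(0)$ as large as needed, and then tracks the recursion $l_{k+1}=C_0 l_k^p/(2\alpha_kp)^n$, $\alpha_{k+1}=2\alpha_kp$ to force $\widetilde u(1)\to\infty$ at the \emph{fixed} radius $r=1$. Your proposal acknowledges this is ``the delicate part'' but supplies none of these devices, so the super poly-harmonic property is not established in the case that matters most. (For the intermediate levels $1\le k<\frac n2-1$ with odd $k$ your argument is repairable and you do not need the problematic iteration at all: feeding $\overline u\gtrsim r^{2k}$ into the top equation gives $\overline v_{\frac n2-1}(r)\to-\infty$ when $a\le 2+2p\le 2+2kp$ — logarithmically in the borderline case — which already contradicts the nonnegativity of $v_{\frac n2-1}$ known from your descending induction; this is exactly the paper's use of the hypothesis $a\le 2+2p$.)

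By contrast, your Steps 2 and 3 are an attractive and essentially correct alternative to the second half of the paper's proof: instead of deriving the exact representation $(-\Delta)^{\frac n2-1}u=R_{2,n}|x|^{2-n}\ast(u^p|x|^{-a})$, composing Riesz potentials and extracting the logarithmic divergence (the paper's Case i), and running a separate ODE argument at the origin for $a\ge2$ (Case ii), you iterate one-sided Green-potential bounds to get $-\Delta u\gtrsim |x|^{-2}$ at infinity and contradict the finiteness of $\int_0^\infty s\,\overline{(-\Delta u)}(s)\,ds$ coming from superharmonicity and boundedness of $\overline u$ at a single center — a cleaner, unified treatment of all $a<n$. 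Two small repairs are needed there: when $2\le a<n$ the Green potential of the full density $u^p|x|^{-a}$ is infinite at the origin, so the comparison should be done with the density truncated to a ball $B_\rho(z_0)$ away from $0$ (which is all you use), and the removable-singularity/maximum-principle step should be stated for that truncated potential, as in the paper's Lemma 2.2. But these are cosmetic; the decisive missing piece is the quantitative blow-up argument in Step 1.
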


\begin{rem}\label{remark0}
In Theorem \ref{Thm0}, the smoothness assumption on $u$ at $x=0$ is necessary. Equation \eqref{PDE} admits a distributional singular solution of the form $u(x)=C|x|^{-\sigma}$ with $\sigma=\frac{n-a}{p-1}>0$.
\end{rem}

\begin{rem}\label{remark1}
It is clear from the proof of Lemma \ref{lemma0} that, under the same assumptions, the super poly-harmonic properties in Lemma \ref{lemma0} also hold for nonnegative classical solutions to the following critical order Hardy-H\'{e}non type inequalities:
\begin{equation}\label{Inequality}
(-\Delta)^{\frac{n}{2}}u(x)\geq\frac{u^{p}(x)}{|x|^{a}}.
\end{equation}
Based on the super poly-harmonic properties, one can verify the proof of Liouville properties in Theorem \ref{Thm0} still work for the critical order inequalities \eqref{Inequality} (see Section 2). Thus the Liouville type results in Theorem \ref{Thm0} also hold for inequalities \eqref{Inequality} under the same assumptions.
\end{rem}

We also consider the following critical order Navier problem
\begin{equation}\label{tNavier}\\\begin{cases}
(-\Delta)^{\frac{n}{2}}u(x)=u^{p}(x)+t \,\,\,\,\,\,\,\,\,\, \text{in} \,\,\, \Omega, \\
u(x)=\Delta u(x)=\cdots=\Delta^{\frac{n}{2}-1}u(x)=0 \,\,\,\,\,\,\,\, \text{on} \,\,\, \partial\Omega,
\end{cases}\end{equation}
where $\Omega\subset\mathbb{R}^{n}$ is a bounded domain with $C^{n-2}$ boundary $\partial\Omega$, and $t$ is any nonnegative real number.

As an application of the Liouville theorem (Theorem \ref{Thm0}), we establish the following a priori estimates for all positive solutions $u$ to \eqref{tNavier} via the method of moving planes in a local way and blowing-up arguments (for related literatures on these methods, please see \cite{BC,BM,CL3,CL4,CY0,Li,SZ1}).
\begin{thm}\label{Thm1}
Assume one of the following two assumptions
\begin{equation*}
  \text{i)} \,\,\, \Omega \,\, \text{is strictly convex}, \,\, 1<p<\infty, \quad\quad\quad\, \text{or} \quad\quad\quad\, \text{ii)} \,\,\, 1<p\leq\frac{n+2}{n-2}
\end{equation*}
holds. Then, for any positive solution $u\in C^{n}(\Omega)\cap C^{n-2}(\overline{\Omega})$ to the critical order Navier problem \eqref{tNavier}, we have
\begin{equation*}
  \|u\|_{L^{\infty}(\overline{\Omega})}\leq C(n,p,t,\lambda_{1},\Omega),
\end{equation*}
where $\lambda_{1}$ is the first eigenvalue for $(-\Delta)^{\frac{n}{2}}$ in $\Omega$ with Navier boundary conditions.
\end{thm}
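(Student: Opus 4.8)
The plan is a proof by contradiction via a blowing-up and re-scaling argument whose limiting profile is ruled out by the Liouville theorem (Theorem \ref{Thm0}); note that in the limit the weight disappears, so the relevant instance of Theorem \ref{Thm0} is the critical order Lane--Emden case $a=0$, for which $-\infty<a=0\le 2+2p$ always holds. Suppose the conclusion fails. Then there are positive solutions $u_k\in C^{n}(\Omega)\cap C^{n-2}(\overline\Omega)$ of \eqref{tNavier} (with the given $t$) and points $x_k\in\Omega$ with $M_k:=\|u_k\|_{L^\infty(\overline\Omega)}=u_k(x_k)\to+\infty$; the $x_k$ are interior points since $u_k=0$ on $\partial\Omega$. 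Throughout it is convenient to record that \eqref{tNavier} is equivalent to the cooperative second order system
\[
-\Delta u_k^{(j)}=u_k^{(j+1)}\quad(1\le j\le \tfrac n2-1),\qquad -\Delta u_k^{(n/2)}=u_k^{p}+t\quad\text{in }\Omega,
\]
with $u_k^{(j)}=0$ on $\partial\Omega$ and $u_k^{(1)}=u_k$, where $u_k^{(j)}:=(-\Delta)^{j-1}u_k$; by the maximum principle $u_k^{(j)}\ge 0$ for every $j$.

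The first step is a uniform estimate near $\partial\Omega$: there are $\delta>0$ and $C_0>0$, depending only on $n,p,t,\Omega$, such that $u_k\le C_0$ on $\Omega_\delta:=\{x\in\Omega:\operatorname{dist}(x,\partial\Omega)<\delta\}$ for all $k$. Granting this, $M_k>C_0$ for large $k$ forces $d_k:=\operatorname{dist}(x_k,\partial\Omega)\ge\delta$, so the blow-up is interior. In case (i), when $\Omega$ is strictly convex, one obtains the boundary bound by the method of moving planes applied to the cooperative system above, sliding hyperplanes in from outside $\Omega$: strict convexity keeps the reflected caps inside $\Omega$, and the narrow-region and decay maximum principles for cooperative systems yield the needed monotonicity of each $u_k^{(j)}$ near $\partial\Omega$, as in \cite{BM,CL3,CL4}. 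In case (ii), when $1<p\le\frac{n+2}{n-2}$, one instead localises near a boundary point, flattens a piece of $\partial\Omega$ (or performs a Kelvin transform centred there) and runs the method of moving planes ``in a local way''; the Sobolev restriction $1<p\le\frac{n+2}{n-2}$ is precisely what makes this local argument close (it excludes a nontrivial rescaled limit carrying boundary data).

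Now I perform the interior blow-up. Set $\mu_k:=M_k^{-(p-1)/n}\to 0$ and $v_k(y):=M_k^{-1}u_k(x_k+\mu_k y)$ for $y\in B_{R_k}(0)$ with $R_k:=d_k\mu_k^{-1}\to+\infty$. A direct computation gives
\[
(-\Delta)^{\frac n2}v_k=v_k^{p}+M_k^{-p}t\quad\text{in }B_{R_k}(0),\qquad 0\le v_k\le v_k(0)=1.
\]
Because $(-\Delta)^{n/2}$ is a constant coefficient elliptic operator of order $n$, interior $L^q$ estimates give, for every fixed $\rho>0$ and every $q<\infty$, a bound $\|v_k\|_{W^{n,q}(B_\rho)}\le C(\rho,q,t)$ once $k$ is large (using $0\le v_k\le 1$ and $M_k^{-p}t\le t$); hence, by Sobolev embedding, compactness and a diagonal extraction, $v_k\to v$ in $C^{n-1}_{loc}(\mathbb R^n)$ along a subsequence, where $v\ge 0$, $v(0)=1$, and $(-\Delta)^{n/2}v=v^{p}$ in $\mathbb R^n$ (with $v\in C^n(\mathbb R^n)$ after an elliptic bootstrap). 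Thus $v$ is a nonnegative solution of \eqref{PDE} with $a=0$, and since $-\infty<0\le 2+2p$, Theorem \ref{Thm0} yields $v\equiv 0$, contradicting $v(0)=1$. Therefore $\{M_k\}$ is bounded, and a routine repackaging of this contradiction argument produces the asserted uniform bound $\|u\|_{L^\infty(\overline\Omega)}\le C(n,p,t,\lambda_1,\Omega)$.

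The main obstacle is the boundary estimate, especially case (ii): running the method of moving planes ``in a local way'' near a possibly non-convex boundary for the critical order operator, which requires controlling the error terms produced by the flattening (or Kelvin transform) and establishing the comparison principles for the associated second order cooperative system in narrow regions. By contrast, the interior blow-up and the compactness of the rescaled family are routine, using only interior elliptic regularity for the polyharmonic operator; in particular the sign change of the fundamental solution $c_n\ln\frac{1}{|x-y|}$, which is the source of the difficulty in Theorem \ref{Thm0}, plays no role here.
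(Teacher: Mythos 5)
Your overall architecture coincides with the paper's: a uniform boundary-layer bound, then an interior blow-up with the rescaling $\mu_k=M_k^{-(p-1)/n}$ whose limit solves $(-\Delta)^{n/2}v=v^p$ in $\mathbb{R}^n$ and is killed by Theorem \ref{Thm0} with $a=0$; your interior step is essentially the paper's and is fine. The genuine gap is in the boundary-layer estimate, which is the heart of the proof. First, moving-plane monotonicity of $u$ (and of the $u^{(j)}$) along inner normals near $\partial\Omega$ does \emph{not} by itself give any bound on the boundary strip: it only transfers the supremum over the strip to an inner parallel set, and the bound there could still depend on the solution. The paper closes this by Kaplan's eigenfunction estimate (its Lemma \ref{lemma2}: multiplying \eqref{tNavier} by the first Navier eigenfunction $\phi$ gives $\int_\Omega u^p\phi\,dx\le\lambda_1^{p'}|\Omega|$) combined with a cone argument: monotonicity in a fixed-size cone $\overline{V_x}$ of directions gives $u(x)=\min_{\overline{V_x}}u$, so $u^p(x)\,|\overline{V_x}|\min\phi\le\int_{\overline{V_x}}u^p\phi\le C(\lambda_1,p,|\Omega|)$. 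This is exactly where $\lambda_1$ enters the constant, and it is absent from your plan; without it your claimed $C_0$ depending only on $n,p,t,\Omega$ is unsubstantiated. (A related subtlety you gloss over: the starting width $\delta$ of the moving planes depends on $\|u\|_{L^\infty(\overline\Omega)}$, and one must argue, as the paper does, that the planes nevertheless keep moving as long as the reflected cap stays in $\Omega$, so the final monotonicity region depends only on the geometry.)

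Second, your mechanism for case ii) is misattributed and, as stated, would not close. You invoke the restriction $1<p\le\frac{n+2}{n-2}$ to ``exclude a nontrivial rescaled limit carrying boundary data''; that route would require a Liouville theorem in the half space $\mathbb{R}^n_+$ with Navier conditions for the critical order operator, which neither the paper nor your proposal provides (the paper's only Liouville theorem is the whole-space Theorem \ref{Thm0}). Moreover, ``flattening'' $\partial\Omega$ destroys the exact reflection symmetry that the moving-plane comparison uses. What the paper actually does is a Kelvin transform centered at an \emph{exterior} tangent point $\overline{x^0}$ (with $\overline{B_{R_0}(\overline{x^0})}\cap\overline{\Omega}=\{x^0\}$), which renders a piece of $\partial\Omega^\ast$ strictly convex so the convex-case moving-plane argument applies locally; the Sobolev restriction enters only through $\tau:=n+2-p(n-2)\ge 0$, which guarantees that the transformed weight $|x^\ast-\overline{x^0}|^{-\tau}$ has the right monotonicity under reflection (using $|x^\ast_\lambda-\overline{x^0}|<|x^\ast-\overline{x^0}|$), and then the same cone-plus-eigenfunction step, transplanted through the Kelvin transform, yields the boundary bound. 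As written, your boundary step in both cases needs these ingredients supplied before the blow-up argument can be launched.
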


As a consequence of the a priori estimates (Theorem \ref{Thm1}), by applying the Leray-Schauder fixed point theorem, we derive the existence of positive solutions to the following Navier problem for critical order Lane-Emden equations
\begin{equation}\label{Navier}\\\begin{cases}
(-\Delta)^{\frac{n}{2}}u(x)=u^{p}(x) \,\,\,\,\,\,\,\,\,\, \text{in} \,\,\, \Omega, \\
u(x)=\Delta u(x)=\cdots=\Delta^{\frac{n}{2}-1}u(x)=0 \,\,\,\,\,\,\,\, \text{on} \,\,\, \partial\Omega,
\end{cases}\end{equation}
where $\Omega\subset\mathbb{R}^{n}$ is a bounded domain with $C^{n-2}$ boundary $\partial\Omega$. This seems to be the first existence result on the critical order Lane-Emden equations.
\begin{thm}\label{Thm2}
Assume one of the following two assumptions
\begin{equation*}
  \text{i)} \,\,\, \Omega \,\, \text{is strictly convex}, \,\, 1<p<\infty, \quad\quad\quad\, \text{or} \quad\quad\quad\, \text{ii)} \,\,\, 1<p\leq\frac{n+2}{n-2}
\end{equation*}
holds. Then, the critical order Navier problem \eqref{Navier} possesses at least one positive solution $u\in C^{n}(\Omega)\cap C^{n-2}(\overline{\Omega})$. Moreover, the positive solution $u$ satisfies
\begin{equation}\label{lower-bound}
  \|u\|_{L^{\infty}(\overline{\Omega})}\geq\left(\frac{\sqrt{2n}}{diam\,\Omega}\right)^{\frac{n}{p-1}}.
\end{equation}
\end{thm}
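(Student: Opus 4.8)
The plan is to derive Theorem \ref{Thm2} from Theorem \ref{Thm1} via a standard degree-theoretic / Leray--Schauder continuation argument, treating the parameter $t\geq 0$ in problem \eqref{tNavier} as a homotopy parameter. First I would recast \eqref{tNavier} as a fixed point problem. Since $\Omega$ has $C^{n-2}$ boundary and we are imposing Navier boundary conditions, the operator $(-\Delta)^{n/2}$ under these conditions factors as an $\frac n2$-fold composition of the Dirichlet Laplacian; hence its inverse $\mathcal{G}$ (the Navier Green operator) is a positive, compact, self-adjoint operator on $L^q(\Omega)$ for suitable $q$, mapping into $C^{n-2}(\overline\Omega)$ by elliptic regularity. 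Define $T_t(u) := \mathcal{G}\big((u^+)^p + t\big)$; then positive solutions of \eqref{tNavier} correspond exactly to fixed points of $T_t$ in the cone of nonnegative functions in a Banach space $X$ (e.g. $X=C(\overline\Omega)$ or $X=C^{2}(\overline\Omega)$), and by the maximum principle any such nonzero fixed point is strictly positive inside $\Omega$. The map $(t,u)\mapsto T_t(u)$ is completely continuous.

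Next I would run the continuation. For $t=0$, $u\equiv 0$ is a fixed point of $T_0$; the key is to show it is the \emph{only} small fixed point and has Leray--Schauder index $0$, while for large $t$ there are no fixed points at all, forcing a nontrivial fixed point for intermediate $t$ by degree theory, and then a limiting argument back to $t=0$. Concretely: (1) \textbf{A priori bound}: by Theorem \ref{Thm1}, all positive solutions of \eqref{tNavier} satisfy $\|u\|_{L^\infty}\leq C$ uniformly for $t$ in a compact range; combined with elliptic estimates this gives a uniform bound in $X$, so the degree $\deg(I-T_t, B_R, 0)$ is well-defined and constant in $t$ for $R$ large. (2) \textbf{Non-existence for large $t$}: pairing the equation with the first Navier eigenfunction $\varphi_1>0$ of $(-\Delta)^{n/2}$ gives $\lambda_1\int u\varphi_1 = \int u^p\varphi_1 + t\int\varphi_1$; since $u^p - \lambda_1 u$ is bounded below on $[0,\infty)$ by some constant $-M$, we get $t\int\varphi_1 \leq M\int\varphi_1$, so there are no solutions once $t > M$; hence $\deg(I - T_t, B_R,0)=0$ for $t$ large, and therefore for all $t\geq 0$ in the range, in particular for the homotopy used below. (3) \textbf{Index of $0$ at $t=0$}: near $u=0$, $T_0(u)=\mathcal G((u^+)^p)$ with $p>1$ is $o(\|u\|)$, so $0$ is the only fixed point in a small ball $B_\rho$ and its index equals $\deg(I, B_\rho, 0)=1$. (4) \textbf{Conclusion}: using the homotopy $h(s,u)=T_{st_0}(u)$ for $s\in[0,1]$ with $t_0$ large, together with the a priori bound excluding fixed points on $\partial B_R$ for all $s$, excision gives $\deg(I-T_0, B_R\setminus \overline{B_\rho},0)=\deg(I-T_0,B_R,0)-\deg(I-T_0,B_\rho,0)=0-1=-1\neq 0$; hence $T_0$ has a fixed point in $B_R\setminus\overline{B_\rho}$, which is a nonzero, hence positive, solution of \eqref{Navier}.

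For the lower bound \eqref{lower-bound}, I would not use degree theory but a direct comparison argument exploiting that $(-\Delta)^{n/2}u = u^p \geq 0$ and the super poly-harmonic structure established in Lemma \ref{lemma0}. Setting $v := (-\Delta)^{n/2-1}u$, the super poly-harmonic property gives $v\geq 0$ (and likewise the intermediate poly-harmonic quantities are nonnegative), while $-\Delta v = u^p$ with $v=0$ on $\partial\Omega$. Let $M:=\|u\|_{L^\infty(\overline\Omega)}$ and fix $x_0\in\Omega$ achieving (or nearly achieving) the maximum. One compares $u$ with the solution of $-\Delta w = $ (a bound for $u^p$) on a ball $B_R(x_0)\subset\Omega$ with $R$ up to $\tfrac12\,\mathrm{diam}\,\Omega$: iterating the Laplacian $\frac n2$ times against the torsion function of the ball yields an inequality of the form $M \leq c_n R^{?} M^p$, and optimizing $R$ produces the stated constant $(\sqrt{2n}/\mathrm{diam}\,\Omega)^{n/(p-1)}$; the precise numerology ($\sqrt{2n}$, exponent $n/(p-1)$) comes from the explicit radial super-solution $(R^2-|x-x_0|^2)$-type barriers whose iterated Laplacians contribute factors of $2n$. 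I expect the routine but slightly delicate bookkeeping here to be the tracking of constants through the $\frac n2$-fold iteration so as to land exactly on $\sqrt{2n}$.

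The main obstacle I anticipate is step (2)/(3) of the degree argument only if the functional-analytic setup is not arranged carefully — specifically, ensuring that $T_t$ maps a single fixed Banach space $X$ compactly into itself with the truncation $u^+$ not introducing spurious fixed points (handled because any fixed point is nonnegative by positivity of $\mathcal G$ and then positive by the maximum principle, so the truncation is invisible on the solution set), and ensuring the a priori bound of Theorem \ref{Thm1} upgrades from $L^\infty$ to the norm of $X$ uniformly along the homotopy (handled by elliptic $L^p$ and Schauder estimates applied $\frac n2$ times, using the $C^{n-2}$ regularity of $\partial\Omega$). Once these are in place the degree computation is formal. The genuinely substantive input is Theorem \ref{Thm1}, which is where the Liouville theorem does its work; the rest is the classical Gidas--Spruck / de Figueiredo--Lions--Nussbaum machinery.
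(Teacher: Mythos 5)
Your existence argument is essentially the paper's: the paper invokes a packaged Leray--Schauder-type cone fixed point theorem (its Theorem \ref{L-S}) whose hypothesis (i) plays the role of your index-at-zero computation and whose hypothesis (ii), with the ray $t\eta$ where $(-\Delta)^{\frac n2}\eta=1$, encodes exactly your ``no solutions of \eqref{tNavier} for large $t$ plus uniform a priori bound along the homotopy'' step; the paper also carries out the point you flag only in passing, namely that the constant in Theorem \ref{Thm1} must be made uniform for $t$ in the bounded range $[0,C_2)$, which it does by re-running the blow-up argument. So for existence your route and the paper's are the same up to whether the degree theory is spelled out or quoted.

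The genuine gap is in your proof of the lower bound \eqref{lower-bound}. First, the comparison you propose on a ball $B_R(x_0)\subset\Omega$ with $R$ up to $\tfrac12\,diam\,\Omega$ cannot close: on $\partial B_R(x_0)$ neither $u$ nor the intermediate quantities $(-\Delta)^iu$ vanish, so each barrier comparison on the sub-ball produces an uncontrolled boundary term of size up to $M=\|u\|_{L^\infty}$ and you only get $M\leq c\,R^2(\cdots)+M$-type inequalities, which are circular; moreover $\Omega$ need not contain any ball of radius $\tfrac12\,diam\,\Omega$ (a thin slab has large diameter and tiny inradius), so a sub-ball argument could at best give a bound in terms of the inradius, not the diameter. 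Second, Lemma \ref{lemma0} is stated for entire solutions in $\mathbb{R}^n$; on $\Omega$ the nonnegativity of $(-\Delta)^iu$ comes simply from the maximum principle iterated with the Navier boundary data. The correct (and the paper's) computation works globally on $\Omega$, where the boundary data do vanish: writing the solution of \eqref{Navier} as the $\frac n2$-fold composition of the Dirichlet Green operator applied to $u^p$, one bounds $h(x):=\int_\Omega G_2(x,y)\,dy$ by comparison with
\begin{equation*}
\zeta(x)=\frac{(diam\,\Omega)^2}{2n}\left(1-\frac{|x-x^0|^2}{(diam\,\Omega)^2}\right),
\end{equation*}
which satisfies $-\Delta\zeta=1$ in $\Omega$ and $\zeta\geq0$ on $\partial\Omega$, giving $\|h\|_{L^\infty}\leq\frac{(diam\,\Omega)^2}{2n}$; iterating $\frac n2$ times yields $M\leq M^p\big(\frac{(diam\,\Omega)^2}{2n}\big)^{\frac n2}$ and hence \eqref{lower-bound}. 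In the paper this estimate is not an afterthought but is built into hypothesis (i) of the fixed point theorem through the explicit choice $\rho=\big(\frac{\sqrt{2n}}{diam\,\Omega}\big)^{\frac n{p-1}}$, so the constructed fixed point automatically lies outside $B_\rho(0)$; note also that your qualitative index-at-zero step (``$T_0(u)=o(\|u\|)$ on a small ball'') gives no explicit radius, so it cannot by itself produce the stated constant.
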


\begin{rem}\label{remark2}
The lower bounds \eqref{lower-bound} on the $L^{\infty}$ norm of positive solutions $u$ indicate that, if $diam\,\Omega<\sqrt{2n}$, then a uniform priori estimate does not exist and blow-up may occur when $p\rightarrow1+$.
\end{rem}

It's well known that the super poly-harmonic properties of solutions are crucial in establishing Liouville type theorems and the representation formulae for higher order or fractional order PDEs (see e.g. \cite{CF,CL2,WX}). In Section 2, we will first prove the super poly-harmonic properties of solutions by using ``re-centers and iteration" arguments (see Lemma \ref{lemma0}). Nevertheless, being different from the subcritical order equations, the integral representation in terms of the fundamental solution of $(-\Delta)^{\frac{n}{2}}$ can't be deduced directly from the super poly-harmonic properties, since the fundamental solution $c_{n}\ln\frac{1}{|x-y|}$ changes its signs in $\mathbb{R}^{n}$. Fortunately, based on Lemma \ref{lemma0}, we can derive instead the following integral inequality (see \eqref{formula})
\begin{equation*}
  +\infty>u(0)\geq\int_{\mathbb{R}^{n}}\frac{R_{2,n}}{|y^{\frac{n}{2}}|^{n-2}}\int_{\mathbb{R}^{n}}\frac{R_{2,n}}{|y^{\frac{n}{2}}-y^{\frac{n}{2}-1}|^{n-2}}\cdots
  \int_{\mathbb{R}^{n}}\frac{R_{2,n}}{|y^{2}-y^{1}|^{n-2}}\frac{u^{p}(y^{1})}{|y^{1}|^{a}}dy^{1}\cdots dy^{\frac{n}{2}}
\end{equation*}
for $-\infty<a<2$, where the Riesz potential's constants $R_{2,n}:=\frac{\Gamma\big(\frac{n-2}{2}\big)}{4\pi^{\frac{n}{2}}}$. This integral inequality will lead to a contradiction on integrability unless $u\equiv0$. In the case $a\geq2$, we can also obtain a contradiction using the integral estimates arguments if $u$ is not identically zero. As a consequence, Theorem \ref{Thm0} is proved.

Using similar arguments as in the proof of the above critical order problems, we will also study uniqueness of nonnegative solutions to the following super-critical order Hardy-H\'{e}non inequalities
\begin{equation}\label{Inequality-sc}\\\begin{cases}
(-\Delta)^{m}u(x)\geq\frac{u^{p}(x)}{|x|^{a}} \,\,\,\,\,\,\,\,\,\, \text{in} \,\,\, \mathbb{R}^{n}, \\
u(x)\geq0, \,\,\,\,\,\,\,\, x\in\mathbb{R}^{n},
\end{cases}\end{equation}
where $\frac{n}{2}<m<+\infty$, $n\geq2$, $-\infty<a<n$ and $1<p<+\infty$. We assume $u\in C^{2m}(\mathbb{R}^{n})$ if $-\infty<a\leq0$, $u\in C^{2m}(\mathbb{R}^{n}\setminus\{0\})\cap C^{2m-2}(\mathbb{R}^{n})$ if $0<a<n$.

It is clear from the proof of Lemma \ref{lemma0} that, under the same assumptions, the super poly-harmonic properties in Lemma \ref{lemma0} also hold for nonnegative classical solutions to super-critical order Hardy-H\'{e}non inequalities \eqref{Inequality-sc}. Based on the super poly-harmonic properties, we can derive the following Liouville type theorem on \eqref{Inequality-sc}.
\begin{thm}\label{Thm0-sc}
Under the same assumptions, the Liouville type results in Theorem \ref{Thm0} also hold for super-critical order inequalities \eqref{Inequality-sc}.
\end{thm}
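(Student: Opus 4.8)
The plan is to reduce Theorem \ref{Thm0-sc} to the machinery already set up for the critical order case. First I would invoke the remark preceding the statement: since the super poly-harmonic properties of Lemma \ref{lemma0} carry over verbatim to nonnegative classical solutions of the super-critical order inequalities \eqref{Inequality-sc} (the proof of Lemma \ref{lemma0} only uses averaging over spheres, the iteration/re-centering scheme, and the sign of $(-\Delta)^m u$, none of which distinguishes the inequality from the equation, nor $m=\frac n2$ from $m>\frac n2$), we may assume at the outset that for a nonnegative solution $u$ of \eqref{Inequality-sc} one has $(-\Delta)^j u \geq 0$ in $\mathbb{R}^n$ for all $j=1,\dots,m-1$ (after possibly discarding trivial cases), together with the finiteness of the relevant spherical averages. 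The key point is that this is exactly the input that drives the argument sketched after Theorem \ref{Thm2}.

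Next I would split into the two ranges of $a$ exactly as in the proof of Theorem \ref{Thm0}. For $-\infty<a<2$ (or more generally in the range where the log kernel is not an obstruction), I would use the super poly-harmonic inequalities together with the fact that $(-\Delta)^m = (-\Delta)\circ(-\Delta)^{m-1}$ decomposes into $m$ iterated applications of $-\Delta$, whose fundamental solution on $\mathbb{R}^n$ (for $n\geq 3$) is the positive Riesz kernel $\frac{R_{2,n}}{|x-y|^{n-2}}$. Iterating the representation $v(x)\geq \int \frac{R_{2,n}}{|x-y|^{n-2}}(-\Delta)v(y)\,dy$ for each nonnegative layer $v=(-\Delta)^{j}u$, I obtain an integral inequality of the form
\begin{equation*}
+\infty > u(0) \geq \int_{\mathbb{R}^n}\frac{R_{2,n}}{|y^m|^{n-2}}\cdots\int_{\mathbb{R}^n}\frac{R_{2,n}}{|y^2-y^1|^{n-2}}\frac{u^p(y^1)}{|y^1|^a}\,dy^1\cdots dy^m,
\end{equation*}
the direct analogue of \eqref{formula} with $\frac n2$ layers replaced by $m$ layers. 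Then a contradiction on integrability (unless $u\equiv 0$) follows: if $u\not\equiv 0$, then $u$ is bounded below by a positive constant on some ball, the innermost $m$-fold convolution of positive Riesz kernels forces a polynomial growth/decay lower bound on the intermediate functions, and comparing the decay rate $\frac{n-a}{p-1}$ implied by the nonlinear term against what the iterated kernels permit yields $+\infty$ on the right-hand side. For the complementary range $2\leq a<n$, the same device fails at the first convolution (the kernel would need to be the log one only when $n=2$, which is excluded here since $n\geq 2$ and we are in the super-critical regime $m>\frac n2$, so actually $n\geq 2$ but we still have the positive kernel for $n\geq 3$; for $n=2$ one instead has $m>1$ and the outermost operator contributes a log, handled separately); here I would instead run the integral-estimate/ODE-for-spherical-averages argument used in the $a\geq 2$ case of Theorem \ref{Thm0}, testing against suitable cutoffs or estimating $\overline{u}(r)=\fint_{\partial B_r}u$ and its iterated Laplacians to again reach a contradiction with the finiteness of $u(0)$ unless $u\equiv 0$.

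The only genuinely new bookkeeping relative to Theorem \ref{Thm0} is that $m$ is now an arbitrary real or half-integer exceeding $\frac n2$ rather than exactly $\frac n2$, so $2m-n$ can be any positive number; one must check that the ``excess" smoothing $2m>n$ does not create a convergent integral that would kill the contradiction. In fact the extra positive power only makes the iterated kernel larger near infinity, which strengthens the divergence, so this is harmless — but it should be stated carefully. I expect the main obstacle to be precisely this verification that the layer-by-layer Riesz iteration remains valid when $m$ is not a half-integer (so that $(-\Delta)^m$ is a genuinely nonlocal operator and the ``peel off one Laplacian at a time'' decomposition must be justified via the super poly-harmonic structure rather than by a naive factorization); once the intermediate nonnegative functions $w_j:=(-\Delta)^j u$ are available with their finite spherical averages, the remaining estimates are a routine transcription of the critical order proof, and I would simply remark that ``the proof is identical to that of Theorem \ref{Thm0}, with $\frac n2$ replaced by $m$ throughout, modulo the observation above.''
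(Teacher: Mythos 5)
Your overall strategy (extend the super poly-harmonic property to \eqref{Inequality-sc}, then an iterated Riesz-potential inequality plus an integrability contradiction for $-\infty<a<2$, and the spherical-average ODE argument of Case ii) for $a\geq2$) is the same as the paper's, but the central step for $a<2$ contains a genuine gap: you propose to peel off \emph{all} $m$ Laplacians and arrive at an $m$-layer inequality with $u(0)$ on the left. That iteration cannot be carried out. Composing $k$ Riesz kernels of order $2$ is only legitimate while $2k<n$ (the formula \eqref{2c26} requires $\alpha_{1}+\alpha_{2}<n$); once the total order reaches $n$ the composition integral itself diverges at infinity, and this divergence \emph{is} the contradiction. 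Concretely, after $\lceil \frac n2\rceil$ peels the inner $(\lceil \frac n2\rceil-1)$-fold composed kernel has order $n-2$ (order $n-1$ if $n$ is odd), and the next convolution against $u^{p}/|y|^{a}$ is already $+\infty$ unless $u\equiv0$, exactly as in \eqref{contradiction}--\eqref{final}. Hence for $u\not\equiv0$ you can never write layers $\lceil\frac n2\rceil+1,\dots,m$: the functions $f_{k}(u)$ you would need to feed into the ball Green-function/maximum-principle step at those stages are identically infinite. The paper (Remark \ref{remark5}) stops precisely there: the derived inequality has only $\lceil\frac n2\rceil$ layers, and its finite left-hand side is $(-\Delta)^{m-\lceil\frac n2\rceil}u(0)$, not $u(0)$. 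Your assertion that ``the extra positive power only makes the iterated kernel larger near infinity, which strengthens the divergence'' rests on a wrong picture --- there is no well-defined iterated kernel of total order $>n$ on $\mathbb{R}^{n}$ --- and ``replace $\frac n2$ by $m$ throughout'' is not the correct transcription of the critical-order proof.

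Relatedly, the obstacle you single out as the main difficulty --- that $m$ might be non-integer, so $(-\Delta)^{m}$ is nonlocal and the layer-by-layer factorization needs separate justification --- does not arise in the paper's setting: solutions of \eqref{Inequality-sc} are assumed $C^{2m}$ (away from the origin when $0<a<n$) and the statements involve $\Delta^{m-1}u$, so $m$ is a positive integer and $(-\Delta)^{m}$ is the classical polyharmonic operator. The genuinely new bookkeeping is the point above: where the peeling must stop and which finite quantity sits on the left. (Your parenthetical treatment of $n=2$ is also muddled, though the paper glosses over that case too; there the kernel $|x-y|^{2-n}$ degenerates and one should argue directly, e.g.\ via the fact that a nonnegative superharmonic function on $\mathbb{R}^{2}$ is constant.) The $a\geq2$ part of your plan is fine and matches the paper, with $u_{\frac n2-1}$ replaced by $u_{m-1}$.
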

\begin{rem}\label{remark5}
Based on the super poly-harmonic properties, one can verify the methods in the proof of Liouville properties in Theorem \ref{Thm0} still work for the super-critical order inequalities \eqref{Inequality-sc} (see Section 2). We only need to mention that, instead of \eqref{formula}, one can derive the following integral inequality
\begin{eqnarray*}
  +\infty&>&(-\Delta)^{m-\lceil\frac{n}{2}\rceil}u(0) \\
  &\geq&\int_{\mathbb{R}^{n}}\frac{R_{2,n}}{|y^{\lceil\frac{n}{2}\rceil}|^{n-2}}\int_{\mathbb{R}^{n}}
  \frac{R_{2,n}}{|y^{\lceil\frac{n}{2}\rceil}-y^{\lceil\frac{n}{2}\rceil-1}|^{n-2}}\cdots
  \int_{\mathbb{R}^{n}}\frac{R_{2,n}}{|y^{2}-y^{1}|^{n-2}}\frac{u^{p}(y^{1})}{|y^{1}|^{a}}dy^{1}\cdots dy^{\lceil\frac{n}{2}\rceil}
\end{eqnarray*}
for $-\infty<a<2$, where $\lceil x\rceil$ denotes the least integer $\geq x$. This integral inequality will lead to a contradiction on integrability unless $u\equiv0$. The rest of the proof are entirely similar to that of Theorem \ref{Thm0}. Thus we omit the details in the proof of Theorem \ref{Thm0-sc}.
\end{rem}

We also consider the following super-critical order Navier problem
\begin{equation}\label{tNavier-sc}\\\begin{cases}
(-\Delta)^{m}u(x)=u^{p}(x)+t \,\,\,\,\,\,\,\,\,\, \text{in} \,\,\, \Omega, \\
u(x)=\Delta u(x)=\cdots=\Delta^{m-1}u(x)=0 \,\,\,\,\,\,\,\, \text{on} \,\,\, \partial\Omega,
\end{cases}\end{equation}
where $\frac{n}{2}<m<+\infty$, $n\geq2$, $\Omega\subset\mathbb{R}^{n}$ is a bounded domain with $C^{2m-2}$ boundary $\partial\Omega$, $1<p<+\infty$ and $t$ is any nonnegative real number.

As an application of the Liouville theorem (Theorem \ref{Thm0-sc}), by using the method of moving planes in a local way and blowing-up arguments, we can establish the following a priori estimates for all positive solutions $u$ to \eqref{tNavier-sc}, and hence derive the existence of positive solutions to \eqref{tNavier-sc} with $t=0$ via the Leray-Schauder fixed point theorem.
\begin{thm}\label{Thm1-sc}
Under the same assumptions as in Theorem \ref{Thm1} and \ref{Thm2}, we have, for any positive solution $u\in C^{2m}(\Omega)\cap C^{2m-2}(\overline{\Omega})$ to \eqref{tNavier-sc},
\begin{equation*}
  \|u\|_{L^{\infty}(\overline{\Omega})}\leq C(n,m,p,t,\lambda_{1},\Omega),
\end{equation*}
where $\lambda_{1}$ is the first eigenvalue for $(-\Delta)^{m}$ in $\Omega$ with Navier boundary conditions. Furthermore, \eqref{tNavier-sc} with $t=0$ possesses at least one positive solution $u\in C^{2m}(\Omega)\cap C^{2m-2}(\overline{\Omega})$. Moreover, the positive solution $u$ satisfies
\begin{equation}\label{lower-bound-sc}
  \|u\|_{L^{\infty}(\overline{\Omega})}\geq\left(\frac{\sqrt{2n}}{diam\,\Omega}\right)^{\frac{2m}{p-1}}.
\end{equation}
\end{thm}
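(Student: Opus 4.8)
The plan is to mirror, step by step, the arguments used to establish Theorem \ref{Thm1} and Theorem \ref{Thm2}, replacing the operator $(-\Delta)^{\frac{n}{2}}$ by $(-\Delta)^{m}$ throughout and using Theorem \ref{Thm0-sc} in place of Theorem \ref{Thm0} at the crucial blow-up limit. First I would record the two decompositions that make the Navier problem tractable: writing $u_{0}:=u$, $u_{j}:=-\Delta u_{j-1}$ for $j=1,\dots,m-1$, the system $-\Delta u_{j}=u_{j+1}$ (with $u_{m-1}$ satisfying $-\Delta u_{m-1}=u^{p}+t\ge0$) together with the Navier conditions $u_{j}=0$ on $\partial\Omega$ forces, by the maximum principle applied iteratively, that each $u_{j}\ge0$ in $\Omega$; so $u>0$ implies every intermediate polyharmonic quantity is nonnegative. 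This is exactly the structure that allows one to reduce the a priori bound to an $L^{\infty}$ bound on $u$ alone, and it is also what guarantees, after rescaling, that the limiting entire object is a genuine nonnegative solution (or supersolution) of the model equation rather than something with sign-changing lower-order data.

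Next I would set up the blow-up and rescaling argument. Suppose, for contradiction, that there is a sequence of positive solutions $u_{k}$ to \eqref{tNavier-sc} with $M_{k}:=\|u_{k}\|_{L^{\infty}(\overline{\Omega})}=u_{k}(x_{k})\to+\infty$, and $x_{k}\to\bar{x}\in\overline{\Omega}$. Define $\lambda_{k}:=M_{k}^{-\frac{p-1}{2m}}\to0$ and $v_{k}(y):=M_{k}^{-1}u_{k}(x_{k}+\lambda_{k}y)$ on the dilated domains $\Omega_{k}:=\lambda_{k}^{-1}(\Omega-x_{k})$; then $0\le v_{k}\le1=v_{k}(0)$ and $(-\Delta)^{m}v_{k}=v_{k}^{p}+t\lambda_{k}^{2m}M_{k}^{-1}$ in $\Omega_{k}$, with Navier conditions on $\partial\Omega_{k}$. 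Two cases arise, exactly as in Theorem \ref{Thm1}: either $\mathrm{dist}(x_{k},\partial\Omega)/\lambda_{k}\to+\infty$, in which case $\Omega_{k}\to\mathbb{R}^{n}$ and elliptic estimates (applied to the second-order chain $-\Delta v_{k,j}=v_{k,j+1}$, using the uniform $L^{\infty}$ bounds and bootstrapping Schauder/$L^{p}$ estimates up through $C^{2m-2}_{loc}$ and then $C^{2m}_{loc}$ on compact sets) give a subsequential limit $v\ge0$, $v(0)=1$, solving $(-\Delta)^{m}v=v^{p}$ in $\mathbb{R}^{n}$ — contradicting Theorem \ref{Thm0-sc} (in the inequality form \eqref{Inequality-sc} with $a=0$, so the hypothesis $-\infty<a\le 2+2p$ is automatic); or $\mathrm{dist}(x_{k},\partial\Omega)/\lambda_{k}$ stays bounded, in which case the limiting domain is a half space $\mathbb{R}^{n}_{+}$ and one needs a Liouville theorem on the half space with Navier conditions. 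This boundary case is where assumption i) versus ii) enters: under ii), $1<p\le\frac{n+2}{n-2}$, one handles the half-space problem by a Kelvin transform together with the moving-plane method (reflecting the extended solution across hyperplanes), reducing it to the whole-space Liouville theorem; under i), strict convexity of $\Omega$, one runs the \emph{method of moving planes in a local way} directly on $u_{k}$ near $\partial\Omega$ to show that the maximum cannot escape to the boundary in the first place, so that the boundary case simply does not occur.

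The main obstacle is precisely the boundary blow-up analysis. In the convex case the technical heart is the local moving-plane argument: one must choose, for each point of $\partial\Omega$, a family of reflection hyperplanes adapted to the convex geometry, verify that the reflected Navier data satisfies the right differential inequalities so that the maximum principle for the cooperative second-order system $\{u_{j}\}$ applies on the (curved, moving) caps, and thereby obtain a uniform boundary strip in which $u_{k}$ is monotone — this forbids interior maxima from concentrating at $\partial\Omega$ and gives a uniform lower bound on $\mathrm{dist}(x_{k},\partial\Omega)$ in the rescaled picture; the arguments of \cite{CL3,CL4} adapt, but the polyharmonic order forces one to track all $m$ components simultaneously. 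In the subcritical case ii) the obstacle is instead establishing the half-space Liouville theorem for $(-\Delta)^{m}u=u^{p}$ with Navier boundary conditions: after the Kelvin transform the extended function is only in a weighted class and may have a singularity at the center of inversion, so one must combine the super-poly-harmonic bounds (Lemma \ref{lemma0}, valid for \eqref{Inequality-sc}) with a careful moving-plane scheme that controls this singularity — again following the scheme behind Theorem \ref{Thm0-sc}. Once the uniform bound $\|u\|_{L^{\infty}(\overline{\Omega})}\le C(n,m,p,t,\lambda_{1},\Omega)$ is in hand, the existence statement for $t=0$ follows verbatim from the Leray-Schauder fixed point theorem applied to the solution operator $u\mapsto (-\Delta)^{-m}(u_{+}^{p})$ (inverting $(-\Delta)^{m}$ with Navier data, which is an iterated Green operator and hence compact on $C(\overline{\Omega})$), using the a priori estimate to pin down the degree on a large ball and the first-eigenvalue comparison to exclude the trivial solution; and the lower bound \eqref{lower-bound-sc} follows by testing the equation against the first Navier eigenfunction (or by comparison with the torsion-type function on a ball containing $\Omega$), which yields $\|u\|_{\infty}^{p-1}\ge \lambda_{1}(B_{R})\gtrsim R^{-2m}$ with $R=\mathrm{diam}\,\Omega/\sqrt{2n}$ after the explicit ball-eigenvalue estimate — exactly as in Theorem \ref{Thm2}. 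I would therefore present the full details only for the genuinely new ingredient (tracking the $m$-component system through the local moving planes and the half-space Liouville step) and refer to Theorems \ref{Thm1}–\ref{Thm2} for the parts that carry over unchanged.
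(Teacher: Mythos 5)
Your overall plan (rescale, pass to a limit, invoke Theorem \ref{Thm0-sc}, then Leray--Schauder with the Green/torsion-function bound for the lower estimate \eqref{lower-bound-sc}) is the right skeleton and matches the paper's intent, but your treatment of the boundary under assumption ii) contains a genuine gap. You allow the blow-up dichotomy to produce a half-space limit and then claim to dispose of it by ``a Liouville theorem on the half space with Navier conditions,'' obtained via Kelvin transform/reflection and reduction to the whole-space result. No such half-space theorem is proved in the paper or cited, and the reduction you sketch does not work: the natural odd (Navier) reflection of the half-space solution across the hyperplane produces a \emph{sign-changing} entire solution of $(-\Delta)^{m}v=|v|^{p-1}v$, to which Theorem \ref{Thm0-sc} --- a statement about \emph{nonnegative} solutions (indeed only about the inequality for nonnegative $u$) --- does not apply; and the conformal-type Kelvin transform for $(-\Delta)^{m}$ with $2m>n$ does not carry the equation into itself in any form covered by the results of Section 2. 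So as written, the boundary case under assumption ii) is unresolved, and this is not a minor detail: it is exactly the step the paper's architecture is designed to avoid.

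The paper's route (Theorem \ref{Boundary}, carried over verbatim to $(-\Delta)^{m}$) never sees a half-space problem under either assumption. One first proves a \emph{boundary layer estimate}: under i), the method of moving planes in a local way gives monotonicity of $u$ along inward normals in a strip of geometric width, and then a cone argument combined with the eigenfunction integral bound (the analogue of Lemma \ref{lemma2} for $(-\Delta)^{m}$, where $t\geq0$ is harmlessly absorbed) converts monotonicity into a uniform $L^{\infty}$ bound on the strip; under ii), one applies the second-order Kelvin transform componentwise to the chain $-\Delta u_{j}=u_{j+1}$, centered at an exterior point $\overline{x^{0}}$ so that the image boundary is strictly convex near $(x^{0})^{\ast}$, and runs the same local moving-plane scheme on the transformed system --- this is precisely where $1<p\leq\frac{n+2}{n-2}$ enters, through the sign of the weight exponent $\tau=n+2-p(n-2)\geq0$ in \eqref{3-49}/\eqref{3-56}, not through any half-space Liouville theorem. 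With the boundary layer estimate in hand, every maximizing point of a putative unbounded sequence lies at distance $\geq\bar\delta$ from $\partial\Omega$, so after rescaling $\Omega_{k}\rightarrow\mathbb{R}^{n}$ and only the whole-space Theorem \ref{Thm0-sc} is needed. Your case i) discussion is essentially this (monotonicity preventing boundary concentration), but you should replace your case ii) half-space step by the Kelvin-transform boundary layer estimate; the remaining ingredients you list (diagonal compactness in the blow-up, the eigenvalue argument bounding $t$, Leray--Schauder with $\rho=(\sqrt{2n}/\mathrm{diam}\,\Omega)^{2m/(p-1)}$ coming from the iterated Green operator and the explicit bound $h\leq(\mathrm{diam}\,\Omega)^{2}/(2n)$, rather than a heuristic eigenvalue comparison on a ball) then go through exactly as in Sections 3--4.
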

\begin{rem}\label{remark4}
Theorem \ref{Thm1-sc} can be proved in a similar way as Theorem \ref{Thm1} and \ref{Thm2}, so we omit the details.
\end{rem}
\begin{rem}\label{remark3}
The lower bounds \eqref{lower-bound-sc} on the $L^{\infty}$ norm of positive solutions $u$ indicate that, if $diam\,\Omega<\sqrt{2n}$, then a uniform priori estimate does not exist and blow-up may occur when $p\rightarrow1+$.
\end{rem}

This paper is organized as follows. In Section 2, we will carry out our proof of Theorem \ref{Thm0}.

In Section 3, we will derive a priori estimates for any positive solutions to the critical order Naiver problem \eqref{tNavier} (Theorem \ref{Thm1}) by applying the method of moving planes in a local way and Kelvin transforms. We will first establish a boundary layer estimates (Theorem \ref{Boundary}), in which the properties of the boundary $\partial\Omega$ play a crucial role. The global a priori estimates follows from the boundary layer estimates, blowing-up analysis, and the Liouville theorem (Theorem \ref{Thm0}).

Section 4 is devoted to the proof of Theorem \ref{Thm2}. The existence of positive solutions to the critical order Lane-Emden equations \eqref{Navier} with Navier boundary conditions will be established via the a priori estimates (Theorem \ref{Thm1}) and the Leray-Schauder fixed point theorem (Theorem \ref{L-S}). We believe that the methods in this paper can be applied to study various higher order PDEs or Systems with general nonlinear terms.

\section{Proof of Theorem \ref{Thm0}}

In this section, we will prove Theorem \ref{Thm0} by using contradiction arguments. Suppose on the contrary that $u\geq0$ satisfies equation \eqref{PDE} but $u$ is not identically zero, then there exists some $\bar{x}\in\mathbb{R}^{n}$ such that $u(\bar{x})>0$.

In the following, we will use $C$ to denote a general positive constant that may depend on $n$, $a$, $p$ and $u$, and whose value may differ from line to line.

The super poly-harmonic properties of solutions are closely related to the representation formulae and Liouville type theorems (see \cite{CF,CL2,WX} and the references therein). Therefore, in order to prove Theorem \ref{Thm0}, we first establish the following lemma.

\begin{lem}\label{lemma0}(Super poly-harmonic properties). Assume $n\geq4$ is even, $-\infty<a<n$, $1<p<+\infty$ and $u$ is a nonnegative solution of \eqref{PDE}. If one of the following two assumptions
\begin{equation*}
  -\infty<a\leq2+2p \,\,\,\,\,\,\,\,\,\,\,\, \text{or} \,\,\,\,\,\,\,\,\,\,\,\, u(x)=o(|x|^{2}) \,\,\,\, \text{as} \,\, |x|\rightarrow+\infty
\end{equation*}
holds, then
\begin{equation*}
  (-\Delta)^{i}u(x)\geq0
\end{equation*}
for every $i=1,2,\cdots,\frac{n}{2}-1$ and all $x\in\mathbb{R}^{n}$.
\end{lem}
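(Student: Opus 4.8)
The plan is to argue by contradiction: if for some index $i_0\in\{1,\dots,\frac n2-1\}$ we had $(-\Delta)^{i_0}u(\bar x)<0$ at some point, we want to propagate this negativity through the iterated Laplacians and reach a contradiction with $u\ge 0$ (or with the growth hypothesis). The natural tool is spherical averaging: for a function $v$, write $\bar v(r;x_0)=\fint_{\partial B_r(x_0)}v\,d\sigma$, so that $(-\Delta)\bar v = \overline{(-\Delta)v}$ and, crucially, $\Delta_r\bar v(r) = \overline{\Delta v}(r)$ where $\Delta_r$ is the radial Laplacian. Set $u_0=u$ and $u_k=(-\Delta)^k u$ for $k=1,\dots,\frac n2-1$, and let $\bar u_k(r)$ denote their spherical averages around a fixed center (first the origin, then, following the ``re-centers and iteration'' strategy advertised in the introduction, around other points). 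The relation $-\Delta_r \bar u_{k-1} = \bar u_k$ together with $-\Delta_r\bar u_{\frac n2-1} = \overline{(-\Delta)^{n/2}u} = \overline{u^p/|x|^a}\ge 0$ gives a chain of ODE inequalities that I would exploit.

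The key steps, in order: (1) Establish the averaging identities and note $\bar u_{\frac n2-1}$ is superharmonic in the radial variable, so $r^{n-1}\bar u_{\frac n2-1}'(r)$ is nonincreasing; hence if $\bar u_{\frac n2-1}$ is ever negative, or if its derivative is ever negative, it must decay to $-\infty$ at a definite polynomial rate, forcing $\bar u_{\frac n2-2}$ to grow, and so on up the chain, eventually forcing $\bar u_0=\bar u$ to become negative --- contradiction. So one first shows $\bar u_{\frac n2-1}\ge 0$ and similarly, descending, $\bar u_k\ge 0$ for all $k$, i.e. the spherical averages of all the intermediate poly-Laplacians are nonnegative. (2) Upgrade ``average nonnegative'' to ``pointwise nonnegative'': here is where re-centering enters --- since the conclusion $\bar u_k(r;x_0)\ge 0$ holds for \emph{every} center $x_0$, and $\bar u_k(0;x_0)=u_k(x_0)$, letting $r\to 0$ would give $u_k(x_0)\ge 0$ pointwise, provided one can run step (1) with an arbitrary center. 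The point $x=0$ is delicate when $0<a<n$ because of the singular weight, which is exactly why the hypotheses only require $u\in C^{n-2}$ near the origin; one handles this by noting $u^p/|x|^a\in L^1_{loc}$ as long as $a<n$ and $p$ is controlled, so the averaged equation still makes sense and $\bar u_{\frac n2-1}$ is well-defined and continuous. (3) The role of the dichotomy $a\le 2+2p$ versus $u(x)=o(|x|^2)$: in the chain-of-ODEs argument, to rule out the ``bad'' growth of some $\bar u_k$ one needs either an a priori growth bound on $u$ (the $o(|x|^2)$ hypothesis directly controls $\bar u_0$ and feeds back through the chain), or, when $a\le 2+2p$, one instead derives a self-improving integral bound: the negativity of some $u_k$ combined with the ODE forces $\int \bar u^p r^{-a} r^{n-1}\,dr$ to diverge in a way incompatible with $u\ge 0$, and the condition $a\le 2+2p$ is what makes the relevant exponents line up (roughly, it guarantees the forced growth rate of $\bar u$ from below beats the integrability threshold coming from the weight). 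I would treat the two cases in parallel branches at the step where the contradiction is extracted.

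The main obstacle I anticipate is step (2)–(3) near $x=0$ in the Hardy range $0<a<n$: making the spherical-average ODE analysis rigorous across the singularity of $|x|^{-a}$, ensuring the ``re-center'' argument survives when the new center makes the singularity land off-center, and tracking constants carefully enough to see that $a\le 2+2p$ is precisely the borderline. A secondary technical point is justifying the integrations by parts / monotonicity of $r^{n-1}\bar u_k'(r)$ without assuming more regularity than $C^{n-2}(\mathbb R^n)$ at the origin --- this should follow from the equation in the distributional sense plus interior elliptic regularity away from $0$, but it needs to be stated carefully. Everything else (the descending induction on $k$, the elementary ODE comparison lemmas for $-\Delta_r v\ge 0$) is routine.
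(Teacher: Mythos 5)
Your overall framework (spherical averages about re-chosen centers, the chain $-\Delta_r\overline{u_{k-1}}=\overline{u_k}$ with $-\Delta_r\overline{u_{\frac n2-1}}\ge 0$ via Jensen, contradiction with $u\ge 0$) is the same skeleton the paper uses, but the central mechanism you rely on fails in exactly the hard case. Propagating negativity up the chain produces \emph{alternating} signs: $u_{\frac n2-1}<0$ somewhere forces (after re-centering) $\widetilde{u_{\frac n2-2}}>0$ and growing, then $\widetilde{u_{\frac n2-3}}<0$, and so on. Your claimed contradiction "eventually forcing $\bar u_0=\bar u$ to become negative" only materializes when the number of alternations is odd (e.g.\ $\frac n2$ odd in the first step, or $\frac n2-i+1$ odd in the descending induction). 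When the parity is even, the chain terminates with $\widetilde u(r)\ge\widetilde u(0)>0$ and growing at least quadratically, which is perfectly compatible with $u\ge 0$: the linear ODE comparison alone gives no contradiction. This even-parity case is the heart of the paper's Step 1, and it is resolved there by a genuinely nonlinear bootstrap that your proposal does not contain: one places the last re-center far away so that $\widetilde u(0)$ is quantitatively large (see \eqref{2-100}--\eqref{2-102}), uses the scaling invariance $u_\lambda(x)=\lambda^{\frac{n-a}{p-1}}u(\lambda x)$ to absorb the constants coming from the Hardy weight, and then iterates the full chain of $\frac n2$ double integrations feeding $\widetilde u(r)\ge l_k r^{\alpha_k}$ through the nonlinearity $u^p/|x|^a$ to get $\widetilde u(r)\ge l_{k+1}r^{\alpha_{k+1}}$ with $l_k\to+\infty$ (equations \eqref{2-35}--\eqref{2-40}), contradicting the finiteness of $\widetilde u(1)$. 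Without this (or an equivalent device), your Step (1) does not prove $u_{\frac n2-1}\ge 0$, and everything downstream collapses.

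Your account of where the dichotomy enters is also misplaced. In the paper, the hypothesis ($a\le 2+2p$ or $u=o(|x|^2)$) is \emph{not} used to establish $u_{\frac n2-1}\ge 0$ (Step 1 works for all $a<n$ via the bootstrap above); it is used only in the descending induction (Step 2), where the even-parity case yields $-\Delta\widetilde u\le-\widetilde c<0$, hence $\widetilde u(r)\gtrsim r^2$, which either contradicts $u=o(|x|^2)$ directly or, via $-\Delta\widetilde{u_{\frac n2-1}}\gtrsim r^{2p-a}$ and two integrations, drives $\widetilde{u_{\frac n2-1}}(r)\to-\infty$ when $a\le 2+2p$ — contradicting the nonnegativity of $u_{\frac n2-1}$ \emph{already proved in Step 1}, not $u\ge 0$ as you suggest. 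So the "self-improving integral bound" you sketch for $a\le 2+2p$ has no previously established sign to contradict at the first stage and cannot substitute for the bootstrap. (Your Step (2), deducing pointwise nonnegativity by letting $r\to 0$ in averages about arbitrary centers, is fine but unnecessary once one argues, as the paper does, by contradiction at a point; and the minor claim that a negative value forces $\overline{u_{\frac n2-1}}$ to decay to $-\infty$ is inaccurate — superharmonicity only gives $\overline{u_{\frac n2-1}}(r)\le\overline{u_{\frac n2-1}}(0)<0$, which is however all that is needed there.)
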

\begin{proof}
Let $u_{i}:=(- \Delta)^{i}u$. We want to show that $u_{i}\geq0$ for $i=1,2,\cdots,\frac{n}{2}-1$. Our proof will be divided into two steps.

\textbf{\emph{Step 1.}} We first show that
\begin{equation}\label{2-1}
u_{\frac{n}{2}-1}=(-\Delta)^{\frac{n}{2}- 1}u\geq0.
\end{equation}
If not, then there exists $0\neq x^{1}\in\mathbb{R}^n$, such that
\begin{equation}\label{2-2}
  u_{\frac{n}{2}-1}(x^{1})<0.
\end{equation}

Now, let
\begin{equation}\label{2-3}
  \bar{f}(r)=\bar{f}\big(|x-x^1|\big):=\frac{1}{|\partial B_{r}(x^{1})|}\int_{\partial B_{r}(x^{1})}f(x)d\sigma
\end{equation}
be the spherical average of $f$ with respect to the center $x^1$. Then, by the well-known property $\overline{\Delta u}=\Delta\bar{u}$ and $-\infty<a<n$, we have, for any $r\geq0$ and $r\neq|x^{1}|$,
\begin{equation}\label{2-4}
\left\{{\begin{array}{l} {-\Delta\overline{u_{\frac{n}{2}-1}}(r)=\overline{\frac{u^{p}(x)}{|x|^{a}}}(r)}, \\  {} \\ {-\Delta\overline{u_{\frac{n}{2}-2}}(r)=\overline{u_{\frac{n}{2}-1}}(r)}, \\ \cdots\cdots \\ {-\Delta\overline u(r)=\overline{u_1}(r)}. \\ \end{array}}\right.
\end{equation}
From the first equation in \eqref{2-4}, by Jensen's inequality, we get, for any $r\geq0$ and $r\neq|x^{1}|$,
\begin{align}\label{2-5}
-\Delta\overline{u_{\frac{n}{2}-1}}(r)&=\frac{1}{{| {\partial
B_{r}({x^{1}})}| }}\int_{\partial B_{r}(
{x^{1}})}\frac{{u^{p}(x)}}{|x|
^{a}}d\sigma\nonumber\\
& \geq({r+| {x^{1}}| })^{-a}\frac
{1}{{| {\partial B_{r}({x^{1}})}| }}
\int_{\partial B_{r}({x^{1}})}{u^{p}(  x)}d\sigma\\
&  \geq({r+| {x^{1}}| })^{-a}\left(
{\frac{1}{{| {\partial B_{r}({x^{1}})}| }
}\int_{\partial B_{r}({x^{1}})}{u(x)
}d\sigma}\right)^{p}\nonumber\\
& =(r+|x^{1}|)^{-a}\bar{u}^{p}(r)\geq0 \quad\quad \text{if} \,\,\, 0\leq a<n,\nonumber
\end{align}
and
\begin{equation}\label{2-5'}
  -\Delta\overline{u_{\frac{n}{2}-1}}(r)\geq\big|r-|x^{1}|\big|^{-a}\bar{u}^{p}(r)\geq0 \quad\quad \text{if} \,\, -\infty<a<0.
\end{equation}
From \eqref{2-5} and \eqref{2-5'}, one has
\begin{equation}\label{2-6}
  -\frac{1}{r^{n-1}}\Big(r^{n-1}\overline{u_{\frac{n}{2}-1}}\,'(r)\Big)'\geq0.
\end{equation}
Since $-\infty<a<n$, we can integrate both sides of \eqref{2-6} from $0$ to $r$ and derive
\begin{equation}\label{2-7}
\overline{u_{\frac{n}{2}-1}}\,'(r)\leq0, \,\,\,\,\,\, \overline{u_{\frac{n}{2}-1}}(r)\leq\overline{u_{\frac{n}{2}-1}}(0)=u_{\frac{n}{2}-1}(x^{1})=:-c_{0}<0
\end{equation}
for any $r\geq0$. From the second equation in \eqref{2-4}, we deduce that
\begin{equation}\label{2-8}
-\frac{1}{{r^{n-1}}}\Big({r^{n-1}\overline{u_{\frac{n}{2}-2}}\,'}(r)\Big)'=\overline{u_{\frac{n}{2}-1}}(r)\leq-c_{0}, \,\,\,\,\,\, \forall \,\, r\geq0,
\end{equation}
integrating from $0$ to $r$ yields
\begin{equation}\label{2-9}
\overline{u_{\frac{n}{2} - 2}}\,'(r)\geq\frac{c_{0}}{n}r, \,\,\,\,\,\, \overline{u_{\frac{n}{2}-2}}(r)\geq\overline{u_{\frac{n}{2}-2}}(0)+\frac{c_{0}}{2n}r^{2}, \,\,\,\,\,\, \forall \,\, r\geq0.
\end{equation}
Hence, there exists $r_{1} > 0$ such that
\begin{equation}\label{2-10}
  \overline{u_{\frac{n}{2}-2}}(r_{1})>0.
\end{equation}
Next, take a point $x^{2}$ with $|x^{2}-x^{1}|=r_{1}$ as the new center, and make average of $\bar{f}$ at the new center $x^{2}$, i.e.,
\begin{equation}\label{2-11}
\overline{\overline{f}}(r)=\overline{\overline{f}}\big(|x-x^{2}|\big):=\frac{1}{|\partial B_{r}(x^{2})|}\int_{\partial B_{r}(x^{2})}\bar f(x)d\sigma.
\end{equation}
One can easily verify that
\begin{equation}\label{2-12}
\overline{\overline{u_{\frac{n}{2}-2}}}(0)=\overline{u _{\frac{n}{2} - 2}}(x^{2})=:c_{1}>0.
\end{equation}
Then, from \eqref{2-5} and Jensen's inequality, we deduce that $(\overline{\overline{u}},\overline{\overline{u_{1}}},\cdots,\overline{\overline{u_{\frac{n}{2}-1}}})$ satisfies
\begin{equation}\label{2-13}
\left\{{\begin{array}{l} {-\Delta\overline{\overline{u_{\frac{n}{2}-1}}}(r)=\overline{\overline{\frac{u^{p}(x)}{|x|^{a}}}}(r)\geq0}, \\
{} \\
{-\Delta\overline{\overline{u_{\frac{n}{2}-2}}}(r)=\overline{\overline{u_{\frac{n}{2}-1}}}(r)}, \\ \cdots\cdots \\ {-\Delta\overline{\overline{u}}(r)=\overline{\overline{u _1}}(r)} \\ \end{array}}\right.
\end{equation}
for any $r\geq0$. Using the same method as in obtaining the estimate \eqref{2-9}, we conclude that
\begin{equation}\label{2-14}
  \overline{\overline{u_{\frac{n}{2}-2}}}(r)\geq\overline{\overline{u_{\frac{n}{2}-2}}}(0)+\frac{{c_{0}}}{{2n}}r^{2}, \,\,\,\,\,\, \forall \,\, r\geq0.
\end{equation}
Thus we infer from \eqref{2-7}, \eqref{2-12}, \eqref{2-13} and \eqref{2-14} that
\begin{equation}\label{2-15}
\overline{\overline{u_{\frac{n}{2}-1}}}(r)\leq\overline{\overline{u_{\frac{n}{2}-1}}}(0)<0, \,\,\,\,\,\,\,\,\, \overline{\overline{u_{\frac{n}{2}-2}}}(r)\geq\overline{\overline{u_{\frac{n}{2}-2}}}(0)>0, \,\,\,\,\,\, \forall \,\, r\geq0.
\end{equation}
From the third equation in \eqref{2-13} and integrating, we derive that
\begin{equation}\label{2-16}
  \overline{\overline{u_{\frac{n}{2}-3}}}\,'(r)\leq-\frac{c_{1}}{n}r \,\,\,\,\,\, \text{and} \,\,\,\,\,\, \overline{\overline{u_{\frac{n}{2}-3}}}(r)\leq\overline{\overline{u_{\frac{n}{2}-3}}}(0)-\frac{{c_{1}}}{{2n}}r^{2}, \,\,\,\,\,\, \forall \,\, r\geq0.
\end{equation}
Hence, there exists $r_{2}>0$ such that
\begin{equation}\label{2-17}
  \overline{\overline{u_{\frac{n}{2}-3}}}(r_{2})<0.
\end{equation}
Next, we take a point $x^{3}$ with $|x^{3}-x^{2}|=r_{2}$ as the new center and make average of $\bar{\bar{f}}$ at the new center $x^{3}$, i.e.,
\begin{equation}\label{2-18}
\overline{\overline{\overline{f}}}(r)=\overline{\overline{\overline{f}}}\big(|x-x^{3}|\big):=\frac{1}{|\partial B_{r}(x^{3})|}\int_{\partial B_{r}(x^{3})}\overline{\overline{f}}(x)d\sigma.
\end{equation}
It follows that
\begin{equation}\label{2-19}
\overline{\overline{\overline{u_{\frac{n}{2}-3}}}}(0)=\overline{\overline{u _{\frac{n}{2}-3}}}(x^{3})=:-c_{2}<0.
\end{equation}
One can easily verify that $\overline{\overline{\overline{u}}}$ and $\overline{\overline{\overline{u_{i}}}}$ ($i=1,\cdots,\frac{n}{2}-1$) satisfy entirely similar equations as $(\overline{\overline{u}},\overline{\overline{u_{1}}},\cdots,\overline{\overline{u_{\frac{n}{2}-1}}})$ (see \eqref{2-13}). Using the same method in as deriving \eqref{2-15}, we arrive at
\begin{equation}\label{2-20}
\overline{\overline{\overline{u_{\frac{n}{2}-1}}}}(r)\leq\overline{\overline{\overline{u_{\frac{n}{2}-1}}}}(0)<0, \,\,\,\,\,\,\, \overline{\overline{\overline{u_{\frac{n}{2}-2}}}}(r)\geq\overline{\overline{\overline{u_{\frac{n}{2}-2}}}}(0)>0, \,\,\,\,\,\,\,
\overline{\overline{\overline{u_{\frac{n}{2}-3}}}}(r)\leq\overline{\overline{\overline{u_{\frac{n}{2}-3}}}}(0)<0
\end{equation}
for any $r\geq0$. Continuing this way, after $\frac{n}{2}$ steps of re-centers (denotes the centers by $x^{1},x^{2},\cdots,x^{\frac{n}{2}}$, the $\frac{n}{2}$ times averages of $f$ by $\widetilde{f}$ and the resulting functions coming from taking $\frac{n}{2}$ times averages by $\widetilde{u}$ and $\widetilde{u_{i}}$ for $i=1,2,\cdots,\frac{n}{2}-1$), we finally obtain that
\begin{equation}\label{2-21}
-\Delta\widetilde{u_{\frac{n}{2}-1}}(r)\geq\widetilde{\frac{u^{p}(x)}{|x|^{a}}}(r)\geq0,
\end{equation}
and for every $i=1,\cdots,\frac{n}{2}-1$,
\begin{equation}\label{2-22}
(-1)^{i}\widetilde{u_{\frac{n}{2}-i}}(r)\geq(-1)^{i}\widetilde{u_{\frac{n}{2}-i}}(0)>0, \,\,\, \,\,\, (-1)^{\frac{n}{2}}\widetilde{u}(r)\geq(-1)^{\frac{n}{2}}\widetilde{u}(0)>0, \,\,\,\,\,\, \forall \,\, r\geq0.
\end{equation}
Moreover, in the above process, we may choose $|x^{\frac{n}{2}}|$ sufficiently large, such that
\begin{equation}\label{2-100}
  |x^{\frac{n}{2}}-x^{\frac{n}{2}-1}|\geq|x^{\frac{n}{2}-1}-x^{\frac{n}{2}-2}|+\cdots+|x^{2}-x^{1}|+|x^{1}|+2.
\end{equation}

Now, if $\frac{n}{2}$ is odd, estimate \eqref{2-22} implies immediately that
\begin{equation}\label{2-23}
  \widetilde{u}(r)\leq\widetilde{u}(0)<0,
\end{equation}
which contradicts the fact that $u\geq0$. Therefore, we only need to deal with the cases that $\frac{n}{2}$ is an even integer hereafter.

Since $\frac{n}{2}$ is even, we have $\widetilde{u}(r)\geq\widetilde{u}(0)>0$ for any $r\geq0$, furthermore, one can actually observe from the above ``re-centers and iteration" process that
\begin{equation}\label{2-101}
  \widetilde{u}(0)\geq\frac{c}{2n}|x^{\frac{n}{2}}-x^{\frac{n}{2}-1}|^{2}
\end{equation}
for some constant $c>0$. Thus we may choose $|x^{\frac{n}{2}}|$ larger, such that both \eqref{2-100} and the following
\begin{equation}\label{2-102}
  \widetilde{u}(0)\geq(2p)^{\frac{np}{(p-1)^{2}}}\left(1+\frac{2n}{p}\right)^{\frac{n}{p-1}}
\end{equation}
hold.

For arbitrary $\lambda>0$, define the re-scaling of $u$ by
\begin{equation}\label{2-24}
  u_{\lambda}(x):=\lambda^{\frac{n-a}{p-1}}u(\lambda x).
\end{equation}
Then one can easily verify that equation \eqref{PDE} is invariant under this re-scaling. After $\frac{n}{2}$ steps of re-centers for $u_{\lambda}$, we denote the centers for $u_{\lambda}$ by $x_{\lambda}^{1},x_{\lambda}^{2},\cdots,x_{\lambda}^{\frac{n}{2}}$ and the resulting function coming from taking $\frac{n}{2}$ times averages by $\widetilde{u_{\lambda}}$ and $\widetilde{u_{\lambda,i}}$ for $i=1,2,\cdots,\frac{n}{2}-1$. Then \eqref{2-21} and \eqref{2-22} still hold for $(\widetilde{u_{\lambda}},\widetilde{u_{\lambda,1}},\cdots,\widetilde{u_{\lambda,\frac{n}{2}-1}})$ and $x_{\lambda}^{k}=\frac{1}{\lambda}x_{k}$ for $k=1,\cdots,\frac{n}{2}$, thus one has the following estimate
\begin{equation}\label{2-25}
  |x_{\lambda}^{\frac{n}{2}}-x_{\lambda}^{\frac{n}{2}-1}|+\cdots+|x_{\lambda}^{2}-x_{\lambda}^{1}|+|x_{\lambda}^{1}|\leq
|x^{\frac{n}{2}}-x^{\frac{n}{2}-1}|+\cdots+|x^{2}-x^{1}|+|x^{1}|=:M
\end{equation}
holds uniformly for every $\lambda\geq1$.

Since we have \eqref{2-22} and $\frac{n}{2}$ is even, it follows that
\begin{equation}\label{2-26}
  \widetilde{u}(r)\geq\widetilde{u}(0)\geq(2p)^{\frac{np}{(p-1)^{2}}}\left(1+\frac{2n}{p}\right)^{\frac{n}{p-1}}>0, \,\,\,\,\,\, \forall \,\, r\geq0,
\end{equation}
and hence
\begin{equation}\label{2-28}
\widetilde{u_{\lambda}}(r)\geq\widetilde{u_{\lambda}}(0)=\lambda^{\frac{n-a}{p-1}}\widetilde{u}(0)
\geq\lambda^{\frac{n-a}{p-1}}(2p)^{\frac{np}{(p-1)^{2}}}\left(1+\frac{2n}{p}\right)^{\frac{n}{p-1}}>0, \,\,\,\,\,\,\,\,\, \forall \,\, r\geq0.
\end{equation}
For $0\leq a<n$, by the estimate \eqref{2-28}, we may assume that, we already have
\begin{equation}\label{2-28'}
\widetilde{u}(0)\geq(1+M)^{\frac{a}{p-1}}(2p)^{\frac{np}{(p-1)^{2}}}\left(1+\frac{2n}{p}\right)^{\frac{n}{p-1}},
\end{equation}
or else we may replace $u$ by $u_{\lambda}$ with $\lambda=(1+M)^{\frac{a}{n-a}}$ (still denoted by $u$).

For any $0\leq r\leq1$, we have
\begin{equation}\label{2-27}
  \widetilde{u}(r)\geq\widetilde{u}(0)\geq l_{0}\,r^{\alpha_{0}},
\end{equation}
where
\begin{equation}\label{2-29}
  l_{0}:=\widetilde{u}(0)\geq\max\left\{(1+M)^{\frac{a}{p-1}},1\right\}(2p)^{\frac{np}{(p-1)^{2}}}\alpha_{0}^{\frac{n}{p-1}}, \,\quad\, \alpha_{0}:=\max\Big\{1,\frac{2n}{p}\Big\}\geq1.
\end{equation}
As a consequence, we infer from \eqref{2-21}, \eqref{2-100}, \eqref{2-25} and \eqref{2-27} that, for any $0\leq r\leq1$,
\begin{align}\label{2-30}
-\Delta\widetilde{u_{\frac{n}{2}-1}}(r)&\geq\Big(r+|x^{\frac{n}{2}}-x^{\frac{n}{2}-1}|+\cdots+|x^{2}-x^{1}|+|x^{1}|\Big)^{-a}\widetilde{u}^{p}(r) \nonumber \\
&\geq\big(1+M\big)^{-a}\,l_{0}^{p}\,r^{\alpha_{0}p}\\
&\geq C_{0}\,l_{0}^{p}\,r^{\alpha_{0}p} \,\,\,\,\,\,\,\,\,\,\,\,\quad \text{if} \,\, 0\leq a<n,  \nonumber
\end{align}
and
\begin{align}\label{2-30'}
-\Delta\widetilde{u_{\frac{n}{2}-1}}(r)&\geq\Big(|x^{\frac{n}{2}}-x^{\frac{n}{2}-1}|-|x^{\frac{n}{2}-1}-x^{\frac{n}{2}-2}|-\cdots-|x^{2}-x^{1}|-|x^{1}|-r\Big)^{-a}
\widetilde{u}^{p}(r) \nonumber \\
&\geq l_{0}^{p}\,r^{\alpha_{0}p}\\
&\geq C_{0}\,l_{0}^{p}\,r^{\alpha_{0}p}, \,\,\,\,\,\,\,\,\,\,\,\,\quad \text{if} \,\, -\infty<a<0,  \nonumber
\end{align}
where
\begin{equation}\label{2-99}
  C_{0}:=\min\left\{(1+M)^{-a},1\right\}\in(0,1].
\end{equation}
Integrating both sides of \eqref{2-30} and \eqref{2-30'} from $0$ to $r$ twice and taking into account of \eqref{2-22} yield
\begin{equation}\label{2-31}
  \widetilde{u_{\frac{n}{2}-1}}(r)<-\frac{C_{0}l_{0}^{p}}{(\alpha_{0}p+n)(\alpha_{0}p+2)}r^{\alpha_{0}p+2}, \,\,\,\,\,\, \forall \,\, 0\leq r\leq1.
\end{equation}
This implies
\begin{equation}\label{2-32}
  -\frac{1}{r^{n-1}}\left(r^{n-1}\widetilde{u_{\frac{n}{2}-2}}\,'(r)\right)'<-\frac{C_{0}l_{0}^{p}}{(\alpha_{0}p+n)(\alpha_{0}p+2)}r^{\alpha_{0}p+2},
\end{equation}
and consequently,
\begin{equation}\label{2-33}
  \widetilde{u_{\frac{n}{2}-2}}(r)>\frac{C_{0}l_{0}^{p}}{(\alpha_{0}p+n)(\alpha_{0}p+2)(\alpha_{0}p+n+2)(\alpha_{0}p+4)}r^{\alpha_{0}p+4}, \,\,\,\,\,\, \forall \,\, 0\leq r\leq1.
\end{equation}
Continuing this way, since $\frac{n}{2}$ is an even integer, by iteration, we can finally arrive at
\begin{equation}\label{2-34}
  \widetilde{u}(r)>\frac{C_{0}l_{0}^{p}}{(\alpha_{0}p+2n)^{n}}r^{\alpha_{0}p+n}, \,\,\,\,\,\, \forall \,\, 0\leq r\leq1.
\end{equation}
Now, define
\begin{equation}\label{2-35}
  \alpha_{k+1}:=2\alpha_{k}p\geq\alpha_{k}p+2n \,\,\,\,\,\, \text{and} \,\,\,\,\,\, l_{k+1}:=\frac{C_{0}l_{k}^{p}}{(2\alpha_{k}p)^{n}}
\end{equation}
for $k=0,1,\cdots$. Then \eqref{2-34} implies
\begin{equation}\label{2-36}
\widetilde{u}(r)>\frac{C_{0}l_{0}^{p}}{(2\alpha_{0}p)^{n}}r^{2\alpha_{0}p}=l_{1}r^{\alpha_{1}}, \,\,\,\,\, \forall \,\, r\in[0,1].
\end{equation}
Suppose we have $\widetilde{u}(r)\geq l_{k}r^{\alpha_{k}}$, then go through the entire process as above, we can derive $\widetilde{u}(r)\geq l_{k+1}r^{\alpha_{k+1}}$ for any $0\leq r\leq1$. Therefore, one can prove by induction that
\begin{equation}\label{2-37}
  \widetilde{u}(r)\geq l_{k}r^{\alpha_{k}}, \,\,\,\,\,\, \forall \,\, r\in[0,1], \,\,\,\,\,\, \forall \,\, k\in\mathbb{N}.
\end{equation}
Through direct calculations, we have
\begin{eqnarray}\label{2-38}
% \nonumber to remove numbering (before each equation)
  l_{k}&=&\frac{C_{0}^{\frac{p^{k}-1}{p-1}}l_{0}^{p^{k}}}{(2p)^{n(k+(k-1)p+(k-2)p^{2}+\cdots+p^{k-1})}\alpha_{0}^{\frac{n(p^{k}-1)}{p-1}}} \\
 \nonumber &=& \frac{C_{0}^{\frac{p^{k}-1}{p-1}}l_{0}^{p^{k}}(2p)^{\frac{nk}{p-1}}}{(2p)^{\frac{n(p^{k+1}-p)}{(p-1)^{2}}}\alpha_{0}^{\frac{n(p^{k}-1)}{p-1}}}
\geq (2p)^{\frac{nk}{p-1}}\left(\frac{C_{0}^{\frac{1}{p-1}}l_{0}}{(2p)^{\frac{np}{(p-1)^{2}}}\alpha_{0}^{\frac{n}{p-1}}}\right)^{p^{k}}
\end{eqnarray}
for $k=0,1,2,\cdots$. From \eqref{2-29}, \eqref{2-99}, \eqref{2-37} and \eqref{2-38}, we deduce that
\begin{equation}\label{2-40}
  \widetilde{u}(1)\geq(2p)^{\frac{nk}{p-1}}\rightarrow+\infty, \,\,\,\,\,\, \text{as} \,\, k\rightarrow\infty.
\end{equation}
This is absurd. Therefore, \eqref{2-1} must hold, that is, $u_{\frac{n}{2}-1}=(-\Delta)^{\frac{n}{2}- 1}u\geq0$.

\textbf{\emph{Step 2.}} Next, we will show that all the other $u_{i}$ ($i=1,\cdots,\frac{n}{2}-2$) must be nonnegative, that is,
\begin{equation}\label{2-41}
u_{\frac{n}{2}-i}(x)\geq0, \,\,\,\,\,\,\,\,\,\,\, \forall \,\, i=2,3,\cdots,\frac{n}{2}-1, \,\,\,\,\,\, \forall \,\, x\in\mathbb{R}^{n}.
\end{equation}
Suppose on the contrary that, there exists some $2\leq i\leq\frac{n}{2}-1$ and some $x^{0}\in\mathbb{R}^{n}$ such that
\begin{equation}\label{2-42}
  u_{\frac{n}{2}-1}(x)\geq0, \,\,\,\,\, u_{\frac{n}{2}-2}(x)\geq0, \,\,\,\, \cdots, \,\,\,\, u_{\frac{n}{2}-i+1}(x)\geq0, \,\,\,\,\,\, \forall \,\, x\in\mathbb{R}^{n},
\end{equation}
\begin{equation}\label{2-43}
  u_{\frac{n}{2}-i}(x^{0})<0.
\end{equation}
Then, repeating the similar ``re-centers and iteration" arguments as in Step 1, after $\frac{n}{2}-i+1$ steps of re-centers (denotes the centers by $\bar{x}^{1},\bar{x}^{2},\cdots,\bar{x}^{\frac{n}{2}-i+1}$), the signs of the resulting functions $\widetilde{u_{\frac{n}{2}-j}}$ ($j=i,\cdots,\frac{n}{2}-1$) and $\widetilde{u}$ satisfy
\begin{equation}\label{2-44}
  (-1)^{j-i+1}\widetilde{u_{\frac{n}{2}-j}}(r)\geq(-1)^{j-i+1}\widetilde{u_{\frac{n}{2}-j}}(0)>0, \,\,\,\,\,\,
(-1)^{\frac{n}{2}-i+1}\widetilde{u}(r)\geq(-1)^{\frac{n}{2}-i+1}\widetilde{u}(0)>0
\end{equation}
for any $r\geq0$. Since $u\geq0$, it follows immediately from \eqref{2-44} that $\frac{n}{2}-i+1$ is even and
\begin{equation}\label{2-45}
  \widetilde{u}(r)\geq\widetilde{u}(0)>0, \,\,\,\,\,\, \forall \,\, r\geq0.
\end{equation}
Furthermore, since $\frac{n}{2}-i$ is odd, we infer from \eqref{2-44} that
\begin{equation}\label{2-48}
  -\Delta\widetilde{u}(r)=\widetilde{u_{1}}(r)\leq\widetilde{u_{1}}(0)=:-\widetilde{c}<0, \,\,\,\,\,\, \forall \,\, r\geq0,
\end{equation}
and hence, by integrating, one has
\begin{equation}\label{2-49}
  \widetilde{u}(r)\geq\widetilde{u}(0)+\frac{\widetilde{c}}{2n}r^{2}>\frac{\widetilde{c}}{2n}r^{2}, \,\,\,\,\,\, \forall \,\, r\geq0.
\end{equation}
Therefore, if we assume that $u(x)=o(|x|^{2})$ as $|x|\rightarrow+\infty$, we will get a contradiction from \eqref{2-49}.

Or, if we assume that $-\infty<a\leq2+2p$, combining \eqref{2-49} with the estimate \eqref{2-21}, we get that, for $r\geq r_{0}$ sufficiently large,
\begin{eqnarray}\label{2-46}
% \nonumber to remove numbering (before each equation)
  -\Delta\widetilde{u_{\frac{n}{2}-1}}(r)&\geq&\left(r+|\bar{x}^{\frac{n}{2}-i+1}-\bar{x}^{\frac{n}{2}-i}|+\cdots+|\bar{x}^{2}-\bar{x}^{1}|+|\bar{x}^{1}|
\right)^{-a}\widetilde{u}^{p}(r) \\
  \nonumber &\geq&\left(\frac{\widetilde{c}}{4n}\right)^{p}r^{2p-a} \quad\quad\quad \text{if} \,\, 0\leq a\leq2+2p,
\end{eqnarray}
and
\begin{eqnarray}\label{2-46'}
% \nonumber to remove numbering (before each equation)
  -\Delta\widetilde{u_{\frac{n}{2}-1}}(r)&\geq&\left(r-|\bar{x}^{\frac{n}{2}-i+1}-\bar{x}^{\frac{n}{2}-i}|-\cdots-|\bar{x}^{2}-\bar{x}^{1}|-|\bar{x}^{1}|
\right)^{-a}\widetilde{u}^{p}(r) \\
  \nonumber &\geq&\left(\frac{\widetilde{c}}{4n}\right)^{p}r^{2p-a} \quad\quad\quad \text{if} \,\, -\infty<a<0.
\end{eqnarray}
Now, by a direct integration on \eqref{2-46} and \eqref{2-46'}, we get, if $-\infty<a<2+2p$, then
\begin{equation}\label{2-47}
\widetilde{u_{\frac{n}{2}-1}}(r)\leq\widetilde{u_{\frac{n}{2}-1}}(r_{0})-\left(\frac{\widetilde{c}}{4n}\right)^{p}
\frac{r^{2+2p-a}-r_{0}^{2+2p-a}}{(n+2p-a)(2+2p-a)}\rightarrow-\infty, \,\,\,\,\,\, \text{as} \,\,\, r\rightarrow\infty;
\end{equation}
if $a=2+2p$, then
\begin{equation}\label{2-47'}
\widetilde{u_{\frac{n}{2}-1}}(r)\leq\widetilde{u_{\frac{n}{2}-1}}(r_{0})-\left(\frac{\widetilde{c}}{4n}\right)^{p}\frac{\ln r-\ln r_{0}}{n-2}\rightarrow-\infty, \,\,\,\,\,\, \text{as} \,\,\, r\rightarrow\infty.
\end{equation}
This contradicts $u_{\frac{n}{2}-1}\geq0$ and thus \eqref{2-41} must hold. This concludes the proof of Lemma \ref{lemma0}.
\end{proof}

In the following, we will continue carrying out our proof under the same assumptions as Lemma \ref{lemma0}.

By Lemma \ref{lemma0}, we can deduce from $-\Delta u\geq0$, $u\geq0$, $u(\bar{x})>0$ and maximum principle that
\begin{equation}\label{2-50}
  u(x)>0, \,\,\,\,\,\,\, \forall \,\, x\in\mathbb{R}^{n}.
\end{equation}
Then, by maximum principle, Lemma 2.1 from Chen and Lin \cite{CLin} and induction, we can also infer further from $(-\Delta)^{i} u\geq0$ ($i=1,\cdots,\frac{n}{2}-1$), $u>0$ and equation \eqref{PDE} that
\begin{equation}\label{2-51}
  (-\Delta)^{i}u(x)>0, \,\,\,\,\,\,\,\, \forall \,\, i=1,\cdots,\frac{n}{2}-1, \,\,\,\, \forall \,\, x\in\mathbb{R}^{n}.
\end{equation}

Next, we will try to obtain contradictions by discussing two different cases $-\infty<a<2$ and $a\geq2$ separately.

\emph{Case i)} $-\infty<a<2$. We will also need the following lemma concerning the removable singularity.
\begin{lem}\label{lemma1}
Suppose $u$ is harmonic in $B_{R}(0)\setminus\{0\}$ and satisfies
\begin{equation*}
  u(x)=\left\{
         \begin{array}{ll}
           o(\ln|x|), \,\,\,\,\,\,  n=2,  \\
           \\
           o(|x|^{2-n}), \,\,\,\,\,\, n\geq3,
         \end{array}
       \right. \,\,\,\,\,\,\, \text{as} \,\,\, |x|\rightarrow0.
\end{equation*}
Then $u$ can be defined at $0$ so that it is $C^{2}$ and harmonic in $B_{R}(0)$.
\end{lem}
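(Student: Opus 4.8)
The plan is to reduce the statement to the classical comparison argument with the fundamental solution of the Laplacian as a barrier. First I would fix a radius $0<r_{0}<R$ and let $v$ be the Poisson integral of $u|_{\partial B_{r_{0}}(0)}$, so that $v$ is harmonic (indeed real-analytic) in $B_{r_{0}}(0)$, continuous on $\overline{B_{r_{0}}(0)}$, and $v=u$ on $\partial B_{r_{0}}(0)$. Set $w:=u-v$ on $B_{r_{0}}(0)\setminus\{0\}$; then $w$ is harmonic there, $w=0$ on $\partial B_{r_{0}}(0)$, and, since $v$ is bounded near the origin, $w$ inherits the growth bound $w(x)=o(\ln|x|)$ if $n=2$ and $w(x)=o(|x|^{2-n})$ if $n\geq3$ as $|x|\to0$. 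It therefore suffices to prove $w\equiv0$: then $u=v$ on $B_{r_{0}}(0)\setminus\{0\}$, and defining $u(0):=v(0)$ makes $u$ harmonic (hence $C^{\infty}$, in particular $C^{2}$) on $B_{r_{0}}(0)$; combined with harmonicity on $B_{R}(0)\setminus\{0\}$ this yields harmonicity on all of $B_{R}(0)$.

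To show $w\equiv0$, let $\Phi$ be the radial harmonic function $\Phi(x)=|x|^{2-n}$ for $n\geq3$ and $\Phi(x)=-\ln|x|$ for $n=2$, which is positive on $B_{r_{0}}(0)\setminus\{0\}$ with $\Phi(x)\to+\infty$ as $|x|\to0$, and set $\Phi_{0}:=\Phi|_{\partial B_{r_{0}}(0)}$ (a constant). Fix $\eps>0$. Since $\Phi(x)-\Phi_{0}\geq\tfrac12\Phi(x)$ for $|x|$ small and $w(x)=o(\Phi(x))$ as $|x|\to0$, there is $\rho_{0}\in(0,r_{0})$ with $|w(x)|\leq2\eps(\Phi(x)-\Phi_{0})$ whenever $|x|=\rho$ and $0<\rho\leq\rho_{0}$. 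On the annulus $A_{\rho}:=\{\,\rho\leq|x|\leq r_{0}\,\}$ the function $\pm w-2\eps(\Phi-\Phi_{0})$ is harmonic, is $\leq0$ on $\partial B_{r_{0}}(0)$ (where $w=0$ and $\Phi-\Phi_{0}=0$) and is $\leq0$ on $\partial B_{\rho}(0)$; the maximum principle on the bounded domain $A_{\rho}$ gives $|w(x)|\leq2\eps(\Phi(x)-\Phi_{0})$ throughout $A_{\rho}$. Fixing $x\neq0$ and letting $\rho\to0$, then $\eps\to0$, forces $w(x)=0$, as desired.

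I do not expect a serious obstacle here: the argument is the standard removable-singularity theorem for harmonic functions (see e.g. Axler--Bourdon--Ramey, or Gilbarg--Trudinger). The only points needing a little care are the bookkeeping that turns the $o(\cdot)$ hypothesis into the clean inequality $|w|\leq2\eps(\Phi-\Phi_{0})$ on small inner spheres, and checking that the maximum principle is applied to a genuine annulus with $w$ continuous up to the boundary. The $C^{2}$ (in fact $C^{\infty}$) regularity at the origin is then automatic, since $u$ coincides near $0$ with the harmonic function $v$.
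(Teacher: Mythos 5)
Your proof is correct and follows exactly the route the paper indicates (the paper omits the argument, stating only that the lemma ``can be proved directly by using the Poisson integral formula and maximum principles''): you subtract the Poisson integral of the boundary data and kill the difference by comparison with the fundamental solution on shrinking annuli. The only cosmetic point is that for $n=2$ the barrier $-\ln|x|$ need not be positive on all of $B_{r_{0}}(0)$ if $r_{0}\geq1$, but your argument never actually uses positivity of $\Phi$ itself, only that $\Phi-\Phi_{0}\geq0$ on the annulus and $\Phi\to+\infty$ at the origin, so nothing needs repair.
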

Lemma \ref{lemma1} can be proved directly by using the Poisson integral formula and maximum principles, so we omit the details.

Now we will first show that $(-\Delta)^{\frac{n}{2}-1}u$ satisfies the following integral equation
\begin{equation}\label{2c1}
  (-\Delta)^{\frac{n}{2}-1}u(x)=\int_{\mathbb{R}^{n}}\frac{R_{2,n}}{|x-y|^{n-2}}\frac{u^{p}(y)}{|y|^{a}}dy, \,\,\,\,\,\,\,\,\,\, \forall \,\, x\in\mathbb{R}^{n},
\end{equation}
where the Riesz potential's constants $R_{\alpha,n}:=\frac{\Gamma(\frac{n-\alpha}{2})}{\pi^{\frac{n}{2}}2^{\alpha}\Gamma(\frac{\alpha}{2})}$ for $0<\alpha<n$.

To this end, for arbitrary $R>0$, let $f_{1}(u)(x):=\frac{u^{p}(x)}{|x|^{a}}$ and
\begin{equation}\label{2c2}
v_{1}^{R}(x):=\int_{B_R(0)}G_R(x,y)f_{1}(u)(y)dy,
\end{equation}
where the Green's function for $-\Delta$ on $B_R(0)$ is given by
\begin{equation}\label{Green}
  G_R(x,y)=R_{2,n}\bigg[\frac{1}{|x-y|^{n-2}}-\frac{1}{\big(|x|\cdot\big|\frac{Rx}{|x|^{2}}-\frac{y}{R}\big|\big)^{n-2}}\bigg], \,\,\,\, \text{if} \,\, x,y\in B_{R}(0),
\end{equation}
and $G_{R}(x,y)=0$ if $x$ or $y\in\mathbb{R}^{n}\setminus B_{R}(0)$.

Then, since $-\infty<a<2$, we can derive that $v_{1}^{R}\in C^{2}(\mathbb{R}^{n}\setminus\{0\})\cap C(\mathbb{R}^{n})$ and satisfies
\begin{equation}\label{2c3}\\\begin{cases}
-\Delta v_{1}^{R}(x)=\frac{u^{p}(x)}{|x|^{a}},\ \ x\in B_R(0)\setminus\{0\},\\
v_{1}^{R}(x)=0,\ \ \ \ \ \ \ x\in \mathbb{R}^{n}\setminus B_R(0).
\end{cases}\end{equation}
Let $w_{1}^R(x):=(-\Delta)^{\frac{n}{2}-1}u(x)-v_{1}^R(x)\in C^{2}(\mathbb{R}^{n}\setminus\{0\})\cap C(\mathbb{R}^{n})$. By Lemma \ref{lemma0}, Lemma \ref{lemma1}, \eqref{PDE} and \eqref{2c3}, we have $w_{1}^R\in C^{2}(\mathbb{R}^{n})$ and satisfies
\begin{equation}\label{2c4}\\\begin{cases}
-\Delta w_{1}^R(x)=0,\ \ \ \ x\in B_R(0),\\
w_{1}^{R}(x)>0, \,\,\,\,\, x\in \mathbb{R}^{n}\setminus B_R(0).
\end{cases}\end{equation}
By maximum principle, we deduce that for any $R>0$,
\begin{equation}\label{2c5}
  w_{1}^R(x)=(-\Delta)^{\frac{n}{2}-1}u(x)-v_{1}^{R}(x)>0, \,\,\,\,\,\,\, \forall \,\, x\in\mathbb{R}^{n}.
\end{equation}
Now, for each fixed $x\in\mathbb{R}^{n}$, letting $R\rightarrow\infty$ in \eqref{2c5}, we have
\begin{equation}\label{2c6}
(-\Delta)^{\frac{n}{2}-1}u(x)\geq\int_{\mathbb{R}^{n}}\frac{R_{2,n}}{|x-y|^{n-2}}f_{1}(u)(y)dy=:v_{1}(x)>0.
\end{equation}
Take $x=0$ in \eqref{2c6}, we get
\begin{equation}\label{2c7}
  \int_{\mathbb{R}^{n}}\frac{u^{p}(y)}{|y|^{n-2+a}}dy<+\infty.
\end{equation}
One can easily observe that $v_{1}\in C^{2}(\mathbb{R}^{n}\setminus\{0\})\cap C(\mathbb{R}^{n})$ is a solution of
\begin{equation}\label{2c8}
-\Delta v_{1}(x)=\frac{u^{p}(x)}{|x|^{a}},  \,\,\,\,\,\,\, x\in \mathbb{R}^n\setminus\{0\}.
\end{equation}
Define $w_{1}(x):=(-\Delta)^{\frac{n}{2}-1}u(x)-v_{1}(x)\in C^{2}(\mathbb{R}^{n}\setminus\{0\})\cap C(\mathbb{R}^{n})$. Then, by Lemma \ref{lemma1}, \eqref{PDE}, \eqref{2c6} and \eqref{2c8}, we have $w_{1}\in C^{2}(\mathbb{R}^{n})$ and satisfies
\begin{equation}\label{2c9}\\\begin{cases}
-\Delta w_{1}(x)=0, \,\,\,\,\,  x\in \mathbb{R}^n,\\
w_{1}(x)\geq0 \,\,\,\,\,\,  x\in \mathbb{R}^n.
\end{cases}\end{equation}
From Liouville theorem for harmonic functions, we can deduce that
\begin{equation}\label{2c10}
   w_{1}(x)=(-\Delta)^{\frac{n}{2}-1}u(x)-v_{1}(x)\equiv C_{1}\geq0.
\end{equation}
Therefore, we have
\begin{equation}\label{2c11}
  (-\Delta)^{\frac{n}{2}-1}u(x)=\int_{\mathbb{R}^{n}}\frac{R_{2,n}}{|x-y|^{n-2}}\frac{u^{p}(y)}{|y|^{a}}dy+C_{1}=:f_{2}(u)(x)>C_{1}\geq0.
\end{equation}

Next, for arbitrary $R>0$, let
\begin{equation}\label{2c12}
v_{2}^R(x):=\int_{B_R(0)}G_R(x,y)f_{2}(u)(y)dy.
\end{equation}
Then, we can get
\begin{equation}\label{2c13}\\\begin{cases}
-\Delta v_2^R(x)=f_{2}(u)(x),\ \ x\in B_R(0),\\
v_2^R(x)=0,\ \ \ \ \ \ \ x\in \mathbb{R}^{n}\setminus B_R(0).
\end{cases}\end{equation}
Let $w_2^R(x):=(-\Delta)^{\frac{n}{2}-2}u(x)-v_2^R(x)$. By Lemma \ref{lemma0}, \eqref{2c11} and \eqref{2c13}, we have
\begin{equation}\label{2c14}\\\begin{cases}
-\Delta w_2^R(x)=0,\ \ \ \ x\in B_R(0),\\
w_2^R(x)>0, \,\,\,\,\, x\in \mathbb{R}^{n}\setminus B_R(0).
\end{cases}\end{equation}
By maximum principle, we deduce that for any $R>0$,
\begin{equation}\label{2c15}
  w_2^R(x)=(-\Delta)^{\frac{n}{2}-2}u(x)-v_2^{R}(x)>0, \,\,\,\,\,\,\, \forall \,\, x\in\mathbb{R}^{n}.
\end{equation}
Now, for each fixed $x\in\mathbb{R}^{n}$, letting $R\rightarrow\infty$ in \eqref{2c15}, we have
\begin{equation}\label{2c16}
(-\Delta)^{\frac{n}{2}-2}u(x)\geq\int_{\mathbb{R}^{n}}\frac{R_{2,n}}{|x-y|^{n-2}}f_{2}(u)(y)dy=:v_{2}(x)>0.
\end{equation}
Take $x=0$ in \eqref{2c16}, we get
\begin{equation}\label{2c17}
  \int_{\mathbb{R}^{n}}\frac{C_{1}}{|y|^{n-2}}dy\leq\int_{\mathbb{R}^{n}}\frac{f_{2}(u)(y)}{|y|^{n-2}}dy<+\infty,
\end{equation}
it follows easily that $C_{1}=0$, and hence we have proved \eqref{2c1}, that is,
\begin{equation}\label{2c18}
  (-\Delta)^{\frac{n}{2}-1}u(x)=f_{2}(u)(x)=\int_{\mathbb{R}^{n}}\frac{R_{2,n}}{|x-y|^{n-2}}\frac{u^{p}(y)}{|y|^{a}}dy.
\end{equation}
One can easily observe that $v_{2}$ is a solution of
\begin{equation}\label{2c19}
-\Delta v_{2}(x)=f_{2}(u)(x),  \,\,\,\,\, x\in \mathbb{R}^n.
\end{equation}
Define $w_{2}(x):=(-\Delta)^{\frac{n}{2}-2}u(x)-v_{2}(x)$, then it satisfies
\begin{equation}\label{2c20}\\\begin{cases}
-\Delta w_{2}(x)=0, \,\,\,\,\,  x\in \mathbb{R}^n,\\
w_{2}(x)\geq0 \,\,\,\,\,\,  x\in\mathbb{R}^n.
\end{cases}\end{equation}
From Liouville theorem for harmonic functions, we can deduce that
\begin{equation}\label{2c21}
   w_{2}(x)=(-\Delta)^{\frac{n}{2}-2}u(x)-v_{2}(x)\equiv C_{2}\geq0.
\end{equation}
Therefore, we have proved that
\begin{equation}\label{2c22}
  (-\Delta)^{\frac{n}{2}-2}u(x)=\int_{\mathbb{R}^{n}}\frac{R_{2,n}}{|x-y|^{n-2}}f_{2}(u)(y)dy+C_{2}=:f_{3}(u)(x)>C_{2}\geq0.
\end{equation}
Through the same methods as above, we can prove that $C_{2}=0$, and hence
\begin{equation}\label{2c23}
  (-\Delta)^{\frac{n}{2}-2}u(x)=f_{3}(u)(x)=\int_{\mathbb{R}^{n}}\frac{R_{2,n}}{|x-y|^{n-2}}f_{2}(u)(y)dy.
\end{equation}
Continuing this way, defining
\begin{equation}\label{2c24}
  f_{k+1}(u)(x):=\int_{\mathbb{R}^{n}}\frac{R_{2,n}}{|x-y|^{n-2}}f_{k}(u)(y)dy
\end{equation}
for $k=1,2,\cdots,\frac{n}{2}$, then by Lemma \ref{lemma0} and induction, we have
\begin{equation}\label{2c25}
  (-\Delta)^{\frac{n}{2}-k}u(x)=f_{k+1}(u)(x)=\int_{\mathbb{R}^{n}}\frac{R_{2,n}}{|x-y|^{n-2}}f_{k}(u)(y)dy
\end{equation}
for $k=1,2,\cdots,\frac{n}{2}-1$, and
\begin{equation}\label{2c50}
  u(x)\geq f_{\frac{n}{2}+1}(u)(x)=\int_{\mathbb{R}^{n}}\frac{R_{2,n}}{|x-y|^{n-2}}f_{\frac{n}{2}}(u)(y)dy.
\end{equation}
In particular, it follows from \eqref{2c25} and \eqref{2c50} that
\begin{eqnarray}\label{2c51}
% \nonumber to remove numbering (before each equation)
  && +\infty>(-\Delta)^{\frac{n}{2}-k}u(0)=\int_{\mathbb{R}^{n}}\frac{R_{2,n}}{|y|^{n-2}}f_{k}(u)(y)dy \\
 \nonumber &\geq& \int_{\mathbb{R}^{n}}\frac{R_{2,n}}{|y^{k}|^{n-2}}\int_{\mathbb{R}^{n}}\frac{R_{2,n}}{|y^{k}-y^{k-1}|^{n-2}}\cdots
  \int_{\mathbb{R}^{n}}\frac{R_{2,n}}{|y^{2}-y^{1}|^{n-2}}\frac{u^{p}(y^{1})}{|y^{1}|^{a}}dy^{1}dy^{2}\cdots dy^{k}
\end{eqnarray}
for $k=1,2,\cdots,\frac{n}{2}-1$, and
\begin{eqnarray}\label{formula}
% \nonumber to remove numbering (before each equation)
  && +\infty>u(0)\geq\int_{\mathbb{R}^{n}}\frac{R_{2,n}}{|y|^{n-2}}f_{\frac{n}{2}}(u)(y)dy \\
 \nonumber &\geq& \int_{\mathbb{R}^{n}}\frac{R_{2,n}}{|y^{\frac{n}{2}}|^{n-2}}\left(\int_{\mathbb{R}^{n}}\frac{R_{2,n}}{|y^{\frac{n}{2}}-y^{\frac{n}{2}-1}|^{n-2}}\cdots
  \int_{\mathbb{R}^{n}}\frac{R_{2,n}}{|y^{2}-y^{1}|^{n-2}}\frac{u^{p}(y^{1})}{|y^{1}|^{a}}dy^{1}dy^{2}\cdots dy^{\frac{n}{2}-1}\right)dy^{\frac{n}{2}}.
\end{eqnarray}
From the properties of Riesz potential, we have the following formula (see \cite{Stein}), that is, for any $\alpha_{1},\alpha_{2}\in(0,n)$ such that $\alpha_{1}+\alpha_{2}\in(0,n)$, one has
\begin{equation}\label{2c26}
  \int_{\mathbb{R}^{n}}\frac{R_{\alpha_{1},n}}{|x-y|^{n-\alpha_{1}}}\cdot\frac{R_{\alpha_{2},n}}{|y-z|^{n-\alpha_{2}}}dy
=\frac{R_{\alpha_{1}+\alpha_{2},n}}{|x-z|^{n-(\alpha_{1}+\alpha_{2})}}.
\end{equation}
Now, by applying the formula \eqref{2c26} and direct calculations, we obtain that
\begin{eqnarray}\label{2c27}
% \nonumber to remove numbering (before each equation)
  && \int_{\mathbb{R}^{n}}\frac{R_{2,n}}{|y^{\frac{n}{2}}-y^{\frac{n}{2}-1}|^{n-2}}\cdots
\int_{\mathbb{R}^{n}}\frac{R_{2,n}}{|y^{3}-y^{2}|^{n-2}}\cdot\frac{R_{2,n}}{|y^{2}-y^{1}|^{n-2}}dy^{2}\cdots dy^{\frac{n}{2}-1} \\
 \nonumber &=& \frac{R_{n-2,n}}{|y^{\frac{n}{2}}-y^{1}|^{2}}.
\end{eqnarray}

Now, we can deduce from \eqref{formula}, \eqref{2c27} and Fubini's theorem that
\begin{eqnarray}\label{contradiction}
% \nonumber to remove numbering (before each equation)
  +\infty&>&u(0)\geq\int_{\mathbb{R}^{n}}\frac{R_{2,n}}{|y^{\frac{n}{2}}|^{n-2}}\left(\int_{\mathbb{R}^{n}}\frac{R_{n-2,n}}{|y^{\frac{n}{2}}-y^{1}|^{2}}\cdot
  \frac{u^{p}(y^{1})}{|y^{1}|^{a}}dy^{1}\right)dy^{\frac{n}{2}} \\
 \nonumber &=& \frac{1}{(2\pi)^{n}}\int_{\mathbb{R}^{n}}\frac{1}{|y|^{n-2}}\left(\int_{\mathbb{R}^{n}}\frac{1}{|y-z|^{2}}\cdot\frac{u^{p}(z)}{|z|^{a}}dz\right)dy.
\end{eqnarray}

We will get a contradiction from \eqref{contradiction}. Indeed, if we assume that $u$ is not identically zero, then by the integrability \eqref{2c7}, we have
\begin{equation}\label{2c60}
0<C_{0}:=\int_{\mathbb{R}^n}\frac{1}{|z|^{n-2}}\cdot\frac{u^p(z)}{|z|^a}dz<+\infty.
\end{equation}
For any given $|y|\geq 3$, if $|z|\geq\big(\ln|y|\big)^{-\frac{1}{n-2}}$, then one has immediately
\begin{equation}\label{2c61}
|y-z|\leq|y|+|z|\leq\left(|y|\big(\ln|y|\big)^{\frac{1}{n-2}}+1\right)|z|\leq 2|y|\big(\ln|y|\big)^{\frac{1}{n-2}}|z|.
\end{equation}
Thus it follows from \eqref{2c60} and \eqref{2c61} that, there exists a $R_{0}$ sufficiently large, such that, for any $|y|\geq R_{0}$, we have
\begin{eqnarray}\label{2c63}
\int_{\mathbb{R}^{n}}\frac{1}{|y-z|^2}\cdot\frac{u^p(z)}{|z|^a}dz&\geq& \frac{1}{4|y|^2\ln|y|}\int_{|z|\geq\big(\ln|y|\big)^{-\frac{1}{n-2}}}\frac{1}{|z|^{n-2}}\cdot\frac{u^p(z)}{|z|^a}dz \\
\nonumber &\geq&  \frac{C_{0}}{8|y|^2\ln|y|}.
\end{eqnarray}

Therefore, we can finally deduce from \eqref{contradiction} and \eqref{2c63} that
\begin{equation}\label{final}
 +\infty>u(0)\geq\frac{C_{0}}{8(2\pi)^{n}}\int_{|y|\geq R_{0}}\frac{1}{|y|^{n}\ln|y|}dy=+\infty,
\end{equation}
which is a contradiction! Therefore, we must have $u\equiv0$ in $\mathbb{R}^{n}$.

\emph{Case ii)} $a\geq2$. From Lemma \ref{lemma1} and \eqref{PDE}, we derive that $u\in C^{n}(\mathbb{R}^{n}\setminus\{0\})\cap C^{n-2}(\mathbb{R}^{n})$ and $u_{i}=(-\Delta)^{i} u\in C^{n-2i}(\mathbb{R}^{n}\setminus\{0\})\cap C^{n-2-2i}(\mathbb{R}^{n})$ ($i=1,\cdots,\frac{n}{2}-1$) form a positive solution to the following Lane-Emden-Hardy system
\begin{equation}\label{PDES}
\left\{{\begin{array}{l} {-\Delta u_{\frac{n}{2}-1}(x)=\frac{u^{p}(x)}{|x|^{a}}, \,\,\,\,\,\, x\in\mathbb{R}^{n}\setminus\{0\}},\\  {} \\ {-\Delta u_{\frac{n}{2}-2}(x)=u_{\frac{n}{2}-1}(x), \,\,\,\,\,\, x\in\mathbb{R}^{n}}, \\ \cdots\cdots \\ {-\Delta u(x)=u_1(x), \,\,\,\,\,\, x\in\mathbb{R}^{n}}. \\ \end{array}}\right.
\end{equation}
Since $u\in C^{n}(\mathbb{R}^{n}\setminus\{0\})\cap C^{n-2}(\mathbb{R}^{n})$, $u>0$, $u_{i}>0$, $\Delta u<0$ and $\Delta u_{i}<0$ for $|x|>0$, by direct calculations, we can deduce that
\begin{equation}\label{2-3-17}
  \frac{d}{dr}\overline{u}(r)\leq0, \,\,\,\,\,\,\,\,\, \frac{d}{dr}\overline{u_{i}}(r)\leq0 \,\,\,\,\,\,\,\,\,\,\,\, \text{for any} \,\,\, 0<r<\infty.
\end{equation}
By taking the spherical average of equations of \eqref{PDES}  with respect to the center $0$ and Jensen's inequality, we have
\begin{equation}\label{2-3-18}
  \overline{u}''(r)+\frac{n-1}{r}\overline{u}'(r)+\overline{u_{1}}(r)=0, \,\,\,\,\,\,\,\, \overline{u_{i}}''(r)+\frac{n-1}{r}\overline{u_{i}}'(r)+\overline{u_{i+1}}(r)=0,
   \,\,\,\,\,\,\,\,\, \forall \,\, r\geq0,
\end{equation}
\begin{equation}\label{2-3-19}
  \overline{u_{\frac{n}{2}-1}}''(r)+\frac{n-1}{r}\overline{u_{\frac{n}{2}-1}}'(r)+r^{-a}\overline{u}^{p}(r)\leq0, \,\,\,\,\,\,\,\,\, \forall \,\, r>0.
\end{equation}
Thus we can infer from \eqref{2-3-17} and \eqref{2-3-19} that, for any $0<r\leq1$,
\begin{equation}\label{2-3-20}
  -\overline{u_{\frac{n}{2}-1}}'(r)\geq r^{1-n}\int_{0}^{r}s^{n-1-a}\overline{u}^{p}(s)ds\geq c^pr^{1-n}\int_{0}^{r}s^{n-1-a}ds,
\end{equation}
where $c:=\min_{|x|\leq1}u(x)>0$. For $2\leq a<n$, one can deduce further from \eqref{2-3-20} that
\begin{equation}\label{2-3-21}
  -\overline{u_{\frac{n}{2}-1}}'(r)\geq \frac{c^p}{n-a}r^{1-a}, \,\,\,\,\,\,\,\, \forall \,\, 0<r\leq1.
\end{equation}
Integrating both sides of \eqref{2-3-21} from $0$ to $1$ yields that
\begin{eqnarray}\label{2-3-22}
  (-\Delta)^{\frac{n}{2}-1}u(0)=\overline{u_{\frac{n}{2}-1}}(0)&\geq&\overline{u_{\frac{n}{2}-1}}(1)
  +\frac{c^p}{n-a}\int_{0}^{1}s^{1-a}ds \\
  \nonumber &\geq& \frac{c^p}{n-a}\int_{0}^{1}s^{1-a}ds=+\infty,
\end{eqnarray}
which is a contradiction! Therefore, we must have $u\equiv0$ in $\mathbb{R}^{n}$.

This concludes the proof of Theorem \ref{Thm0}.

\section{Proof of Theorem \ref{Thm1}}
In this section, we will prove Theorem \ref{Thm1} via the method of moving planes in local way and blowing-up techniques.

\subsection{Boundary layer estimates}
In this subsection, we will first establish the following boundary layer estimates by applying Kelvin transform and the method of moving planes in local way. The properties of the boundary $\partial\Omega$ will play a crucial role in our discussions.
\begin{thm}\label{Boundary}
Assume one of the following two assumptions
\begin{equation*}
  \text{i)} \,\,\, \Omega \,\, \text{is strictly convex}, \,\, 1<p<\infty, \quad\quad\quad\, \text{or} \quad\quad\quad\, \text{ii)} \,\,\, 1<p\leq\frac{n+2}{n-2}
\end{equation*}
holds. Then, there exists a $\bar{\delta}>0$ depending only on $\Omega$ such that, for any positive solution $u\in C^{n}(\Omega)\cap C^{n-2}(\overline{\Omega})$ to the critical order Navier problem \eqref{tNavier}, we have
\begin{equation*}
  \|u\|_{L^{\infty}(\overline{\Omega}_{\bar{\delta}})}\leq C(n,p,\lambda_{1},\Omega),
\end{equation*}
where the boundary layer $\overline{\Omega}_{\bar{\delta}}:=\{x\in\overline{\Omega}\,|\,dist(x,\partial\Omega)\leq\bar{\delta}\}$.
\end{thm}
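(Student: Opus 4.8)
The plan is to combine the method of moving planes in a neighbourhood of $\partial\Omega$, which reduces the sought bound on the boundary layer $\overline{\Omega}_{\bar\delta}$ to a bound on a \emph{fixed} compact set $K\subset\Omega$ lying at positive distance from the boundary, with a blow-up analysis on $K$ (where no boundary effect is felt) and the Liouville theorem, Theorem \ref{Thm0}. First I would record a reduction. Since $n\geq4$ is even, $m:=\frac n2\geq2$ is an integer, and under the Navier conditions problem \eqref{tNavier} is equivalent to the triangular cooperative system
\begin{equation*}
  -\Delta u_i=u_{i+1}\ \ (0\leq i\leq m-2),\qquad -\Delta u_{m-1}=u^p+t\ \ \text{in }\Omega,\qquad u_0=\cdots=u_{m-1}=0\ \ \text{on }\partial\Omega,
\end{equation*}
where $u_0:=u$ and $u_i:=(-\Delta)^i u$. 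Iterating the maximum principle upward from the last equation gives $u_i>0$ in $\Omega$ for every $i$; moreover, since the Green function of this Navier problem is the $m$-fold composition of the positive Dirichlet Green function of $-\Delta$, $u$ satisfies a positive integral equation $u(x)=\int_\Omega\mathcal{G}_\Omega(x,y)(u^p(y)+t)\,dy$. The $C^{n-2}$ regularity of $\partial\Omega$ supplies the regularity used below, and this positive-kernel formulation is what makes the moving-plane step run smoothly.

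Next comes the boundary step: I would show that there are $\delta_1>0$ and a compact set $K\subset\Omega$, both depending only on $\Omega$, such that every positive solution of \eqref{tNavier} satisfies $u(x)\leq\sup_K u$ whenever $\mathrm{dist}(x,\partial\Omega)\leq\frac{\delta_1}{2}=:\bar\delta$. In case i), the strict convexity of $\Omega$ lets one, for each $x_0\in\partial\Omega$, slide hyperplanes parallel to the tangent hyperplane at $x_0$ inward; strict convexity and compactness of $\partial\Omega$ give a uniform width up to which the reflected caps remain inside $\Omega$, and the moving-plane machinery for the cooperative system (carried out in integral form using the positivity of $\mathcal{G}_\Omega$, or component-wise via narrow-domain maximum principles and the Hopf lemma) gives monotonicity of $u$ along the inward normal throughout a fixed collar, hence the displayed inequality. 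In case ii), $\Omega$ need not be convex, so instead I would apply near each $x_0\in\partial\Omega$ a Kelvin transform centred at a point just outside $\overline\Omega$; the restriction $1<p\leq\frac{n+2}{n-2}$ is used precisely to keep the transformed poly-harmonic problem under control near the Kelvin centre, so that the planes can be started and advanced by a uniform amount, again producing $u\leq\sup_K u$ near $\partial\Omega$.

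Finally, the blow-up. Suppose the theorem fails; then there are positive solutions $u_k$ with $\|u_k\|_{L^\infty(\overline{\Omega}_{\bar\delta})}\to\infty$, hence, by the boundary step, $M_k:=\sup_K u_k\to\infty$, attained at some $z_k\in K$ with $\mathrm{dist}(z_k,\partial\Omega)$ bounded below by a positive constant. Rescaling $v_k(y):=M_k^{-1}u_k(z_k+\mu_k y)$ with $\mu_k:=M_k^{-(p-1)/n}\to0$ --- a scaling under which \eqref{tNavier} is invariant except for the $+t$ term, which picks up the vanishing factor $M_k^{-p}$ --- and combining interior elliptic estimates with a doubling argument, one extracts along a subsequence $v_k\to v$ in $C^n_{loc}(\mathbb{R}^n)$ with $v\geq0$, $v(0)=1$ and $(-\Delta)^{\frac n2}v=v^p$ in $\mathbb{R}^n$, the rescaled domains exhausting $\mathbb{R}^n$ because $\mathrm{dist}(z_k,\partial\Omega)/\mu_k\to\infty$. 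This contradicts Theorem \ref{Thm0} (the case $a=0$ is admissible), so the bound holds; the dependence of $C$ on $\lambda_1$ enters, if one wants it explicitly, through the a priori control $\int_\Omega u^p\phi_1\,dx\leq C(n,p,\lambda_1,\Omega)$ obtained by testing \eqref{tNavier} against the first Navier eigenfunction $\phi_1$ and using $p>1$.

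The main obstacle is the boundary step, in particular the non-convex case ii): running the moving-plane scheme uniformly over all solutions along a merely $C^{n-2}$, possibly non-convex boundary forces the Kelvin transform, and controlling the resulting poly-harmonic cooperative system near its singular centre --- while verifying the maximum and Hopf principles, or equivalently the integral inequalities from the positive kernel $\mathcal{G}_\Omega$, inside the thin reflected caps --- is where the exponent restriction $p\leq\frac{n+2}{n-2}$ is indispensable.
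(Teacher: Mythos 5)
Your outline is workable, but it takes a genuinely different route from the paper at the decisive step, and as written it falls slightly short of the precise statement. The paper's proof of Theorem \ref{Boundary} uses no blow-up and no Liouville theorem: after the moving-plane step (planes slid along the inward normal under strict convexity in case i), and, in case ii), a Kelvin transform centred at the centre $\overline{x^0}$ of an exterior tangent ball so that $\partial\Omega^{\ast}$ becomes locally strictly convex), it upgrades monotonicity along a single normal to monotonicity along a whole cone of directions, so that $u(x)=\min_{\overline{V_{x}}}u$ over a cone $\overline{V_{x}}$ of uniform size for every $x$ at a fixed small depth; combining this with Lemma \ref{lemma2} (testing \eqref{tNavier} against the first Navier eigenfunction and discarding the $t$-term gives $\int_{\Omega}u^{p}\phi\,dx\le\lambda_{1}^{p'}|\Omega|$) yields a \emph{direct} pointwise bound on the inner set $D$, hence on the collar, with a constant manifestly independent of $t$ and of the solution. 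You instead reduce the collar to $\sup_{K}u$ on a fixed interior compact set and then bound $\sup_{K}u$ by rescaling and Theorem \ref{Thm0}; since Theorem \ref{Thm0} is established independently in Section 2 this is not circular, and it in effect proves Theorem \ref{Thm1} and Theorem \ref{Boundary} simultaneously (your route also only needs monotonicity along the normal, not along cones). What you lose is exactly what the paper's cone-plus-eigenfunction argument buys: an explicit, non-contradiction bound that is uniform in $t$.

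That uniformity is part of the claim (the constant is $C(n,p,\lambda_{1},\Omega)$, not $C(n,p,t,\lambda_{1},\Omega)$) and is what the paper invokes in the proof of Theorem \ref{Thm2}, where $t$ ranges over an interval. Your contradiction argument, run at fixed $t$ with the remainder $t/M_{k}^{p}\to0$, only delivers a $t$-dependent constant; to repair this you must allow $t_{k}$ to vary along the contradicting sequence and check $t_{k}/M_{k}^{p}\to0$, which follows, for instance, from testing \eqref{tNavier} against the eigenfunction to get $t\le\lambda_{1}\|u\|_{L^{\infty}(\overline{\Omega})}$, whence $t_{k}/M_{k}^{p}\le\lambda_{1}M_{k}^{1-p}\to0$; you also need $K$ chosen so that $K$ together with the collar covers $\overline{\Omega}$, ensuring $M_{k}=\|u_{k}\|_{L^{\infty}(\overline{\Omega})}$ and hence $v_{k}\le1$ on the rescaled domains. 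Finally, a small correction to your case ii): the Kelvin centre lies \emph{outside} $\overline{\Omega}$, so there is no singular centre inside $\Omega^{\ast}$; the restriction $1<p\le\frac{n+2}{n-2}$ is used to make $\tau=n+2-p(n-2)\ge0$, so that $|x^{\ast}_{\lambda}-\overline{x^0}|<|x^{\ast}-\overline{x^0}|$ makes the Kelvin weights (and, with $t\ge0$, the transformed $t$-terms) compare favourably under reflection, which is what allows the planes to start and advance uniformly.
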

\begin{proof}
We will carry our our proof of Theorem \ref{Boundary} by discussing the two different assumptions i) and ii) separately.

\emph{Case i)} $\Omega$ is strictly convex and $1<p<\infty$. For any $x^{0}\in\partial\Omega$, let $\nu^{0}$ be the unit internal normal vector of $\partial\Omega$ at $x^{0}$, we will show that $u(x)$ is monotone increasing along the internal normal direction in the region
\begin{equation}\label{3-1}
  \overline{\Sigma_{\delta_{0}}}=\left\{x\in\overline{\Omega}\,|\,0\leq(x-x^{0})\cdot\nu^{0}\leq\delta_{0}\right\},
\end{equation}
where $\delta_{0}>0$ depends only on $x^{0}$ and $\Omega$.

To this end, we define the moving plane by
\begin{equation}\label{3-2}
  T_{\lambda}:=\{x\in\mathbb{R}^n\,|\,(x-x^{0})\cdot\nu^{0}=\lambda\},
\end{equation}
and denote
\begin{equation}\label{3-2'}
  \Sigma_{\lambda}:=\{x\in\Omega\,|\,0<(x-x^{0})\cdot\nu^{0}<\lambda\}
\end{equation}
for $\lambda>0$, and let $x^{\lambda}$ be the reflection of the point $x$ about the plane $T_{\lambda}$.

Let $u_{i}:=(-\Delta)^{i}u$ for $1\leq i\leq\frac{n}{2}-1$. By maximum principle, we have
\begin{equation}\label{3-3}
  u_{i}(x)>0 \quad\quad\, \text{in} \,\, \Omega
\end{equation}
for $1\leq i\leq\frac{n}{2}-1$. Define
\begin{equation}\label{3-4}
  U^{\lambda}(x):=u(x^{\lambda})-u(x) \quad\quad\, \text{and} \quad\quad\, U^{\lambda}_{i}(x):=u_{i}(x^{\lambda})-u_{i}(x)
\end{equation}
for $1\leq i\leq\frac{n}{2}-1$. Then we can deduce from \eqref{tNavier} that, for any $\lambda$ satisfying the reflection of $\Sigma_{\lambda}$ is contained in $\Omega$,
\begin{equation}\label{3-5}
\left\{{\begin{array}{l} {-\Delta U^{\lambda}_{\frac{n}{2}-1}(x)=u^{p}(x^{\lambda})-u^{p}(x)=p\xi^{p-1}_{\lambda}(x)U^{\lambda}(x), \,\,\,\,\,\, x\in\Sigma_{\lambda},}\\  {} \\ {-\Delta U^{\lambda}_{\frac{n}{2}-2}(x)=U^{\lambda}_{\frac{n}{2}-1}(x), \,\,\,\,\,\, x\in\Sigma_{\lambda},} \\ \cdots\cdots \\ {-\Delta U^{\lambda}(x)=U^{\lambda}_1(x), \,\,\,\,\,\, x\in\Sigma_{\lambda},} \\ {} \\
{U^{\lambda}(x)\geq0, \, U^{\lambda}_{1}(x)\geq0, \cdots, U^{\lambda}_{\frac{n}{2}-1}(x)\geq0, \,\,\,\,\,\, x\in\partial\Sigma_{\lambda},} \\ \end{array}}\right.
\end{equation}
where $\xi_{\lambda}(x)$ is valued between $u(x^{\lambda})$ and $u(x)$ by mean value theorem. Now, we will prove that there exists some $\delta>0$ sufficiently small (depending on $n$, $p$, $\|u\|_{L^{\infty}(\overline{\Omega})}$ and $\Omega$), such that
\begin{equation}\label{3-6}
  U^{\lambda}(x)\geq 0 \quad\quad\, \text{in} \,\, \Sigma_{\lambda}
\end{equation}
for all $0<\lambda\leq\delta$. This provides a starting point to move the plane $T_{\lambda}$.

Indeed, suppose on the contrary that there exists a $0<\lambda\leq\delta$ such that
\begin{equation}\label{3-7}
  U^{\lambda}(x)<0 \quad\quad\, \text{somewhere in} \,\, \Sigma_{\lambda}.
\end{equation}
Let
\begin{equation}\label{3-8}
  \beta(x):=\cos\frac{(x-x^{0})\cdot\nu^{0}}{\delta},
\end{equation}
then it follows that $\beta(x)\in[\cos1,1]$ for any $x\in\Sigma_{\lambda}$ and $-\frac{\Delta\beta(x)}{\beta(x)}=\frac{1}{\delta^2}$. Define
\begin{equation}\label{3-9}
  \overline{U^{\lambda}}(x):=\frac{U^{\lambda}(x)}{\beta(x)} \quad\quad \text{and} \quad\quad \overline{U^{\lambda}_{i}}(x):=\frac{U^{\lambda}_{i}(x)}{\beta(x)}
\end{equation}
for $i=1,\cdots,\frac{n}{2}-1$ and $x\in\Sigma_{\lambda}$. Then there exists a $x_{0}\in\Sigma_{\lambda}$ such that
\begin{equation}\label{3-10}
  \overline{U^{\lambda}}(x_{0})=\min_{\overline{\Sigma_{\lambda}}}\overline{U^{\lambda}}(x)<0.
\end{equation}
Since
\begin{equation}\label{3-11}
  -\Delta U^{\lambda}(x_{0})=-\Delta\overline{U^{\lambda}}(x_{0})\beta(x_{0})-2\nabla\overline{U^{\lambda}}(x_{0})\cdot\nabla\beta(x_{0})
  -\overline{U^{\lambda}}(x_{0})\Delta\beta(x_{0}),
\end{equation}
one immediately has
\begin{equation}\label{3-12}
  U^{\lambda}_{1}(x_{0})=-\Delta U^{\lambda}(x_{0})\leq\frac{1}{\delta^2}U^{\lambda}(x_{0})<0.
\end{equation}
Thus there exists a $x_{1}\in\Sigma_{\lambda}$ such that
\begin{equation}\label{3-13}
  \overline{U^{\lambda}_{1}}(x_{1})=\min_{\overline{\Sigma_{\lambda}}}\overline{U^{\lambda}_{1}}(x)<0.
\end{equation}
Similarly, it follows that
\begin{equation}\label{3-14}
  U^{\lambda}_{2}(x_{1})=-\Delta U^{\lambda}_{1}(x_{1})\leq\frac{1}{\delta^2}U^{\lambda}_{1}(x_{1})<0.
\end{equation}
Continuing this way, we get $\{x_{i}\}_{i=1}^{\frac{n}{2}-1}\subset\Sigma_{\lambda}$ such that
\begin{equation}\label{3-15}
  \overline{U^{\lambda}_{i}}(x_{i})=\min_{\overline{\Sigma_{\lambda}}}\overline{U^{\lambda}_{i}}(x)<0,
\end{equation}
\begin{equation}\label{3-16}
  U^{\lambda}_{i+1}(x_{i})=-\Delta U^{\lambda}_{i}(x_{i})\leq\frac{1}{\delta^2}U^{\lambda}_{i}(x_{i})<0
\end{equation}
for $i=1,2,\cdots,\frac{n}{2}-2$, and
\begin{equation}\label{3-17}
  \overline{U^{\lambda}_{\frac{n}{2}-1}}(x_{\frac{n}{2}-1})=\min_{\overline{\Sigma_{\lambda}}}\overline{U^{\lambda}_{\frac{n}{2}-1}}(x)<0,
\end{equation}
\begin{equation}\label{3-18}
  p\xi^{p-1}_{\lambda}(x_{\frac{n}{2}-1})U^{\lambda}(x_{\frac{n}{2}-1})=-\Delta U^{\lambda}_{\frac{n}{2}-1}(x_{\frac{n}{2}-1})\leq\frac{1}{\delta^2}U^{\lambda}_{\frac{n}{2}-1}(x_{\frac{n}{2}-1})<0.
\end{equation}
Therefore, we have
\begin{eqnarray}\label{3-19}
% \nonumber to remove numbering (before each equation)
  U^{\lambda}(x_{0}) &\geq& \delta^2U^{\lambda}_{1}(x_{0})\geq \delta^{2}U^{\lambda}_{1}(x_{1})\frac{\beta(x_{0})}{\beta(x_{1})}
  \geq\delta^{4}U^{\lambda}_{2}(x_{1})\frac{\beta(x_{0})}{\beta(x_{1})} \\
 \nonumber &\geq& \delta^{4}U^{\lambda}_{2}(x_{2})\frac{\beta(x_{0})}{\beta(x_{2})}\geq\delta^{6}U^{\lambda}_{3}(x_{2})\frac{\beta(x_{0})}{\beta(x_{2})}
 \geq\delta^{6}U^{\lambda}_{3}(x_{3})\frac{\beta(x_{0})}{\beta(x_{3})} \\
 \nonumber  &\geq& \cdots\cdots\geq\delta^{n-2}U^{\lambda}_{\frac{n}{2}-1}(x_{\frac{n}{2}-1})\frac{\beta(x_{0})}{\beta(x_{\frac{n}{2}-1})} \\
 \nonumber  &\geq& p\delta^{n}\xi^{p-1}_{\lambda}(x_{\frac{n}{2}-1})U^{\lambda}(x_{\frac{n}{2}-1})\frac{\beta(x_{0})}{\beta(x_{\frac{n}{2}-1})} \\
 \nonumber  &\geq& p\delta^{n}\|u\|^{p-1}_{L^{\infty}(\overline{\Omega})}U^{\lambda}(x_{0}),
\end{eqnarray}
that is,
\begin{equation}\label{3-20}
  1\leq p\delta^{n}\|u\|^{p-1}_{L^{\infty}(\overline{\Omega})},
\end{equation}
which is absurd if we choose $\delta>0$ small enough such that
\begin{equation}\label{3-21}
  0<\delta<\left(p\|u\|^{p-1}_{L^{\infty}(\overline{\Omega})}\right)^{-\frac{1}{n}}.
\end{equation}
So far, our conclusion is: the method of moving planes can be carried on up to $\lambda=\delta$.

Next, we will move the plane $T_{\lambda}$ further along the internal normal direction at $x^{0}$ as long as the property
\begin{equation}\label{3-22}
  U^{\lambda}(x)\geq0 \quad\quad\, \text{in} \,\, \Sigma_{\lambda}
\end{equation}
holds. One can conclude that the moving planes process can be carried on (with the property \eqref{3-22}) as long as the reflection of $\overline{\Sigma_{\lambda}}$ is still contained in $\Omega$.

In fact, let $T_{\lambda_{0}}$ be a plane such that \eqref{3-22} holds and the reflection of $\overline{\Sigma_{\lambda_{0}}}$ about $T_{\lambda_{0}}$ is contained in $\Omega$. Then there exists a $\eta>0$ such that, the reflection of $\overline{\Sigma_{\lambda_{0}+\eta}}$ about $T_{\lambda_{0}+\eta}$ is still contained in $\Omega$. By \eqref{3-5}, \eqref{3-22} and strong maximum principles, one actually has
\begin{equation}\label{3-23}
  U^{\lambda_{0}}(x)>0, \quad U_{i}^{\lambda_{0}}(x)>0 \quad\quad\, \text{in} \,\, \Sigma_{\lambda_{0}},
\end{equation}
thus there exists a constant $c_{\delta}>0$ such that
\begin{equation}\label{3-24}
  U^{\lambda_{0}}(x)\geq c_{\delta}>0, \quad U_{i}^{\lambda_{0}}(x)\geq c_{\delta}>0 \quad\quad\, \text{in} \,\, \overline{\Sigma_{\lambda_{0}-\frac{\delta}{2}}}.
\end{equation}
By the smoothness of $u$, we infer that, there exists a $0<\epsilon<\min\{\eta,\frac{\delta}{2}\}$ such that, for any $\lambda\in(\lambda_{0},\lambda_{0}+\epsilon]$,
\begin{equation}\label{3-25}
  U^{\lambda}(x)>0, \quad U_{i}^{\lambda}(x)>0 \quad\quad\, \text{in} \,\, \overline{\Sigma_{\lambda_{0}-\frac{\delta}{2}}}.
\end{equation}
Suppose there exists a $\lambda_{0}<\lambda\leq\lambda_{0}+\epsilon$ such that
\begin{equation}\label{3-26}
  U^{\lambda}(x)<0 \quad\quad\, \text{somewhere in} \,\, \Sigma_{\lambda}\setminus\overline{\Sigma_{\lambda_{0}-\frac{\delta}{2}}}.
\end{equation}
Let
\begin{equation}\label{3-27}
  \overline{\beta}(x):=\cos\frac{\left(x-x^{0}-(\lambda_{0}-\frac{\delta}{2})\nu^{0}\right)\cdot\nu^{0}}{\delta} \quad\quad \text{and} \quad\quad \widetilde{U^{\lambda}}(x):=\frac{U^{\lambda}(x)}{\overline{\beta}(x)}
\end{equation}
for $x\in\Sigma_{\lambda}\setminus\overline{\Sigma_{\lambda_{0}-\frac{\delta}{2}}}$. Then there exists a $x_{0}\in\Sigma_{\lambda}\setminus\overline{\Sigma_{\lambda_{0}-\frac{\delta}{2}}}$ such that
\begin{equation}\label{3-28}
  \widetilde{U^{\lambda}}(x_{0})=\min_{\overline{\Sigma_{\lambda}\setminus\overline{\Sigma_{\lambda_{0}-\frac{\delta}{2}}}}}\widetilde{U^{\lambda}}(x)<0,
\end{equation}
by using similar arguments as proving \eqref{3-19}, one can also arrive at
\begin{equation}\label{3-29}
  U^{\lambda}(x_{0})\geq p\delta^n\|u\|^{p-1}_{L^{\infty}(\overline{\Omega})}U^{\lambda}(x_{0}),
\end{equation}
which contradicts with the choice of $\delta$. Therefore, we have proved that
\begin{equation}\label{3-30}
  U^{\lambda}(x)\geq0 \quad\quad\, \text{in} \,\, \Sigma_{\lambda}
\end{equation}
for any $\lambda\in(\lambda_{0},\lambda_{0}+\epsilon]$, that is, the plane $T_{\lambda}$ can be moved forward a little bit from $T_{\lambda_{0}}$.

Therefore, there exists a $\delta_{0}>0$ depending only on $x^{0}$ and $\Omega$ such that, $u(x)$ is monotone increasing along the internal normal direction in the region
\begin{equation}\label{3-31}
  \overline{\Sigma_{\delta_{0}}}:=\left\{x\in\overline{\Omega}\,|\,0\leq(x-x^{0})\cdot\nu^{0}\leq\delta_{0}\right\}.
\end{equation}
Since $\partial\Omega$ is $C^{n-2}$, there exists a small $0<r_{0}<\frac{\delta_{0}}{8}$ depending on $x^{0}$ and $\Omega$ such that, for any $x\in B_{r_{0}}(x^{0})\cap\partial\Omega$, $u(x)$ is monotone increasing along the internal normal direction at $x$ in the region
\begin{equation}\label{3-32}
  \overline{\Sigma_{x}}:=\left\{z\in\overline{\Omega}\,\Big|\,0\leq(z-x)\cdot\nu_{x}\leq\frac{3}{4}\delta_{0}\right\}.
\end{equation}
where $\nu_{x}$ denotes the unit internal normal vector at the point $x$ ($\nu_{x^{0}}:=\nu^{0}$). Since $\Omega$ is strictly convex, there also exists a $\theta>0$ depending on $x^{0}$ and $\Omega$ such that
\begin{equation}\label{3-33}
  I:=\left\{\nu\in\mathbb{R}^n\,|\,|\nu|=1, \, \nu\cdot\nu^{0}\geq\cos\theta\right\}\subset\left\{\nu_{x}\,|\,x\in B_{r_{0}}(x^{0})\cap\partial\Omega\right\},
\end{equation}
and hence, we have, for any $x\in B_{r_{0}}(x^{0})\cap\partial\Omega$ and $\nu\in I$,
\begin{equation}\label{3-34}
  u(x+s\nu) \quad \text{is monotone increasing with respect to} \,\,\, s\in\left[0,\frac{\delta_{0}}{2}\right].
\end{equation}
Let
\begin{equation}\label{3-35}
  D:=\{x+r_{0}\nu^{0}\,|\,x\in B_{r_{0}}(x^{0})\cap\partial\Omega\},
\end{equation}
one can easily verify that
\begin{equation}\label{3-36}
  \max_{\overline{B_{r_{0}}(x^{0})\cap\Omega}}u(x)\leq\max_{\overline{D}}u(x).
\end{equation}
For any $x\in\overline{D}$, let
\begin{equation}\label{3-37}
  \overline{V_{x}}:=\left\{x+\nu\,\Big|\,\nu\cdot\nu^{0}\geq|\nu|\cos\theta, \, |\nu|\leq\frac{\delta_{0}}{4}\right\}
\end{equation}
be a piece of cone with vertex at $x$, then it is easy to see that
\begin{equation}\label{3-38}
  u(x)=\min_{z\in\overline{V_{x}}}u(z).
\end{equation}

Now we need the following Lemma to control the integral of $u$ on $\overline{V_{x}}$.
\begin{lem}\label{lemma2}
Let $\lambda_{1}$ be the first eigenvalue for $(-\Delta)^{\frac{n}{2}}$ in $\Omega$ with Navier boundary condition, and $0<\phi\in C^n(\Omega)\cap C^{n-2}(\overline{\Omega})$ be the corresponding eigenfunction (without loss of generality, we may assume $\|\phi\|_{L^{\infty}(\overline{\Omega})}=1$), i.e.,
\begin{equation*}\\\begin{cases}
(-\Delta)^{\frac{n}{2}}\phi(x)=\lambda_{1}\phi(x) \,\,\,\,\,\,\,\,\,\, \text{in} \,\,\, \Omega, \\
\phi(x)=-\Delta \phi(x)=\cdots=(-\Delta)^{\frac{n}{2}-1}\phi(x)=0 \,\,\,\,\,\,\,\, \text{on} \,\,\, \partial\Omega.
\end{cases}\end{equation*}
Then, we have
\begin{equation*}
  \int_{\Omega}u^{p}(x)\phi(x)dx\leq C(\lambda_{1},p,|\Omega|).
\end{equation*}
\end{lem}
\begin{proof}
Multiply both side of \eqref{tNavier} by the eigenfunction $\phi(x)$ and integrate by parts, one gets
\begin{eqnarray}\label{3-39}
  \int_{\Omega}u^{p}(x)\phi(x)dx&\leq&\int_{\Omega}(u^{p}(x)+t)\phi(x)dx=\int_{\Omega}(-\Delta)^{\frac{n}{2}}u(x)\cdot\phi(x)dx \\
 \nonumber &=&\int_{\Omega}u(x)\cdot(-\Delta)^{\frac{n}{2}}\phi(x)dx=\lambda_{1}\int_{\Omega}u(x)\phi(x)dx.
\end{eqnarray}
By H\"{o}lder's inequality, we have
\begin{equation}\label{3-40}
  \int_{\Omega}u^p(x)\phi(x)dx\leq\lambda_{1}\left(\int_{\Omega}u^{p}(x)\phi(x)dx\right)^{\frac{1}{p}}\left(\int_{\Omega}\phi(x)dx\right)^{\frac{1}{p'}},
\end{equation}
and hence
\begin{equation}\label{3-41}
  \int_{\Omega}u^{p}(x)\phi(x)dx\leq\lambda^{p'}_{1}\int_{\Omega}\phi(x)dx\leq\lambda^{p'}_{1}|\Omega|.
\end{equation}
This completes the proof of Lemma \ref{lemma2}.
\end{proof}

By \eqref{3-38} and Lemma \ref{lemma2}, we see that, for any $x\in\overline{D}$,
\begin{eqnarray}\label{3-42}
% \nonumber to remove numbering (before each equation)
  C(\lambda_{1},p,\Omega) &\geq& \int_{\Omega}u^{p}(x)\phi(x)dx\geq\int_{\overline{V_{x}}}u^{p}(z)\phi(z)dz \\
 \nonumber &\geq& u^p(x)|\overline{V_{x}}|\cdot\min_{\overline{\Omega^{r_{0}/2}}}\phi=:u^{p}(x)\cdot C(n,x^{0},\Omega),
\end{eqnarray}
where $\overline{\Omega^{r_{0}/2}}:=\{x\in\Omega\,|\,dist(x,\partial\Omega)\geq \frac{r_{0}}{2}\}$, and hence
\begin{equation}\label{3-43}
  u(x)\leq C(n,p,x^{0},\lambda_{1},\Omega), \quad\quad \forall x\in\overline{D}.
\end{equation}
Therefore, we arrive at
\begin{equation}\label{3-44}
  \max_{\overline{B_{r_{0}}(x^{0})\cap\Omega}}u(x)\leq\max_{\overline{D}}u(x)\leq C(n,p,x^{0},\lambda_{1},\Omega).
\end{equation}

Since $x^{0}\in\partial\Omega$ is arbitrary and $\partial\Omega$ is compact, we can cover $\partial\Omega$ by finite balls $\{B_{r_{k}}(x^{k})\}_{k=0}^{K}$ with centers $\{x^{k}\}_{k=0}^{K}\subset\partial\Omega$ ($K$ depends only on $\Omega$). Therefore, there exists a $\bar{\delta}>0$ depending only on $\Omega$ such that
\begin{equation}\label{3-45}
  \|u\|_{L^{\infty}(\overline{\Omega}_{\bar{\delta}})}\leq\max_{0\leq k\leq K}\max_{\overline{B_{r_{k}}(x^{k})\cap\Omega}}u(x)\leq\max_{0\leq k\leq K}C(n,p,x^{k},\lambda_{1},\Omega)=:C(n,p,\lambda_{1},\Omega),
\end{equation}
where the boundary layer $\overline{\Omega}_{\bar{\delta}}:=\{x\in\overline{\Omega}\,|\,dist(x,\partial\Omega)\leq\bar{\delta}\}$. This completes the proof of boundary layer estimates under assumption i).

\emph{Case ii)} $1<p\leq\frac{n+2}{n-2}$. Under this assumption, we do not require the convexity of $\Omega$ anymore. Since $\partial\Omega$ is $C^{n-2}$, there exists a $R_{0}>0$ depending only on $\Omega$ such that, for any $x^{0}\in\partial\Omega$, there exists a $\overline{x^{0}}$ satisfying $\overline{B_{R_{0}}(\overline{x^{0}})}\cap\overline{\Omega}=\{x^{0}\}$. For any $x^{0}\in\partial\Omega$, we define the Kelvin transform centered at $\overline{x^{0}}$ by
\begin{equation}\label{3-46}
  x\mapsto x^{\ast}:=\frac{x-\overline{x^{0}}}{|x-\overline{x^{0}}|^{2}}+\overline{x^{0}}, \quad\quad \Omega\rightarrow\Omega^{\ast}\subset B_{\frac{1}{R_{0}}}(\overline{x^{0}}),
\end{equation}
and hence there exists a small $0<\varepsilon_{0}<\frac{1}{100R_{0}}$ depending on $x^{0}$ and $\Omega$ such that $B_{\varepsilon_{0}}\big((\overline{x^{0}})^{\ast}\big)\cap\partial\Omega^{\ast}$ is strictly convex.

Now we define
\begin{equation}\label{3-47}
  \overline{u}(x^{\ast}):=\frac{1}{|x^{\ast}-\overline{x^{0}}|^{n-2}}u\left(\frac{x^{\ast}-\overline{x^{0}}}{|x^{\ast}-\overline{x^{0}}|^{2}}+\overline{x^{0}}\right),
\end{equation}
\begin{equation}\label{3-48}
  \overline{u_{i}}(x^{\ast}):=\frac{1}{|x^{\ast}-\overline{x^{0}}|^{n-2}}u_{i}\left(\frac{x^{\ast}-\overline{x^{0}}}{|x^{\ast}-\overline{x^{0}}|^{2}}+\overline{x^{0}}\right)
\end{equation}
for $i=1,\cdots,\frac{n}{2}-1$. Then, we have
\begin{equation}\label{3-48'}
  \overline{u}(x^{\ast})>0, \quad\, \overline{u_{i}}(x^{\ast})>0 \quad\quad \text{in} \,\, \Omega^{\ast},
\end{equation}
and from \eqref{tNavier}, we infer that $\overline{u}(x^{\ast})$ and $\overline{u_{i}}(x^{\ast})$ satisfy
\begin{equation}\label{3-49}
\left\{{\begin{array}{l} {-\Delta \overline{u_{\frac{n}{2}-1}}(x^{\ast})=\frac{1}{|x^{\ast}-\overline{x^{0}}|^{\tau}}\overline{u}^{p}(x^{\ast})+\frac{t}{|x^{\ast}-\overline{x^{0}}|^{n+2}}, \,\,\,\,\,\, x^{\ast}\in\Omega^{\ast},}\\  {} \\ {-\Delta \overline{u_{\frac{n}{2}-2}}(x^{\ast})=\frac{1}{|x^{\ast}-\overline{x^{0}}|^{4}}\overline{u_{\frac{n}{2}-1}}(x^{\ast}), \,\,\,\,\,\, x^{\ast}\in\Omega^{\ast},} \\ \cdots\cdots \\ {-\Delta \overline{u}(x^{\ast})=\frac{1}{|x^{\ast}-\overline{x^{0}}|^{4}}\overline{u_{1}}(x^{\ast}), \,\,\,\,\,\, x^{\ast}\in\Omega^{\ast},} \\ {} \\
{\overline{u}(x^{\ast})=\overline{u_{1}}(x^{\ast})=\cdots=\overline{u_{\frac{n}{2}-1}}(x^{\ast})=0, \,\,\,\,\,\, x^{\ast}\in\partial\Omega^{\ast},} \\ \end{array}}\right.
\end{equation}
where $\tau:=n+2-p(n-2)\geq0$. Let $\nu^{0}$ be the unit internal normal vector of $\partial\Omega^{\ast}$ at $(x^{0})^{\ast}$, we will show that $\overline{u}(x^{\ast})$ is monotone increasing along the internal normal direction in the region
\begin{equation}\label{3-50}
  \overline{\Sigma_{\delta_{\ast}}}=\left\{x^{\ast}\in\overline{\Omega^{\ast}}\,|\,0\leq(x^{\ast}-(x^{0})^{\ast})\cdot\nu^{0}\leq\delta_{\ast}\right\},
\end{equation}
where $\delta_{\ast}>0$ depends only on $x^{0}$ and $\Omega$.

For this purpose, we define the moving plane by
\begin{equation}\label{3-51}
  T^{\ast}_{\lambda}:=\{x^{\ast}\in\mathbb{R}^n\,|\,(x^{\ast}-(x^{0})^{\ast})\cdot\nu^{0}=\lambda\},
\end{equation}
and denote
\begin{equation}\label{3-52}
  \Sigma^{\ast}_{\lambda}:=\{x^{\ast}\in\Omega^{\ast}\,|\,0<(x^{\ast}-(x^{0})^{\ast})\cdot\nu^{0}<\lambda\}
\end{equation}
for $\lambda>0$, and let $x^{\ast}_{\lambda}$ be the reflection of the point $x^{\ast}$ about the plane $T^{\ast}_{\lambda}$.

Define
\begin{equation}\label{3-53}
  W^{\lambda}(x^{\ast}):=\overline{u}(x^{\ast}_{\lambda})-\overline{u}(x^{\ast}) \quad\quad\, \text{and} \quad\quad\, W^{\lambda}_{i}(x^{\ast}):=\overline{u_{i}}(x^{\ast}_{\lambda})-\overline{u_{i}}(x^{\ast})
\end{equation}
for $1\leq i\leq\frac{n}{2}-1$. Then we can deduce from \eqref{3-49} that, for any $\lambda$ satisfying the reflection of $\Sigma^{\ast}_{\lambda}$ is contained in $\Omega^{\ast}$,
\begin{equation}\label{3-54}
\left\{{\begin{array}{l} {-\Delta W^{\lambda}_{\frac{n}{2}-1}(x^{\ast})=\frac{\overline{u}^{p}(x^{\ast}_{\lambda})}{|x^{\ast}_{\lambda}-\overline{x^{0}}|^{\tau}}
-\frac{\overline{u}^{p}(x^{\ast})}{|x^{\ast}-\overline{x^{0}}|^{\tau}}
+\frac{t}{|x^{\ast}_{\lambda}-\overline{x^{0}}|^{n+2}}-\frac{t}{|x^{\ast}-\overline{x^{0}}|^{n+2}}, \,\,\,\,\,\, x^{\ast}\in\Sigma^{\ast}_{\lambda},}\\  {} \\ {-\Delta W^{\lambda}_{\frac{n}{2}-2}(x^{\ast})=\frac{\overline{u_{\frac{n}{2}-1}}(x^{\ast}_{\lambda})}{|x^{\ast}_{\lambda}-\overline{x^{0}}|^{4}}
-\frac{\overline{u_{\frac{n}{2}-1}}(x^{\ast})}{|x^{\ast}-\overline{x^{0}}|^{4}}, \,\,\,\,\,\, x^{\ast}\in\Sigma^{\ast}_{\lambda},} \\ \cdots\cdots \\ {-\Delta W^{\lambda}(x^{\ast})=\frac{\overline{u_{1}}(x^{\ast}_{\lambda})}{|x^{\ast}_{\lambda}-\overline{x^{0}}|^{4}}
-\frac{\overline{u_{1}}(x^{\ast})}{|x^{\ast}-\overline{x^{0}}|^{4}}, \,\,\,\,\,\, x^{\ast}\in\Sigma^{\ast}_{\lambda},} \\ {} \\
{W^{\lambda}(x^{\ast})\geq0, \, W^{\lambda}_{1}(x^{\ast})\geq0, \cdots, W^{\lambda}_{\frac{n}{2}-1}(x^{\ast})\geq0, \,\,\,\,\,\, x^{\ast}\in\partial\Sigma^{\ast}_{\lambda}.} \\ \end{array}}\right.
\end{equation}
Notice that for any $x^{\ast}\in\Sigma^{\ast}_{\lambda}$ with $\lambda<\frac{1}{R_{0}}$, one has
\begin{equation}\label{3-55}
  0<|x^{\ast}_{\lambda}-\overline{x^{0}}|<|x^{\ast}-\overline{x^{0}}|<\frac{1}{R_{0}},
\end{equation}
and hence, by direct calculations, it follows from \eqref{3-54} and $t\geq0$ that
\begin{equation}\label{3-56}
\left\{{\begin{array}{l} {-\Delta W^{\lambda}_{\frac{n}{2}-1}(x^{\ast})\geq\frac{p\varphi^{p-1}_{\lambda}(x^{\ast})}{|x^{\ast}-\overline{x^{0}}|^{\tau}}W^{\lambda}(x^{\ast})\geq pR^{\tau}_{0}\varphi^{p-1}_{\lambda}(x^{\ast})W^{\lambda}(x^{\ast}), \,\,\,\,\,\, x^{\ast}\in\Sigma^{\ast}_{\lambda},}\\  {} \\ {-\Delta W^{\lambda}_{\frac{n}{2}-2}(x^{\ast})\geq R^{4}_{0}\,W^{\lambda}_{\frac{n}{2}-1}(x^{\ast}), \,\,\,\,\,\, x^{\ast}\in\Sigma^{\ast}_{\lambda},} \\ \cdots\cdots \\ {-\Delta W^{\lambda}(x^{\ast})\geq R^{4}_{0}\,W^{\lambda}_{1}(x^{\ast}), \,\,\,\,\,\, x^{\ast}\in\Sigma^{\ast}_{\lambda},} \\ {} \\
{W^{\lambda}(x^{\ast})\geq0, \, W^{\lambda}_{1}(x^{\ast})\geq0, \cdots, W^{\lambda}_{\frac{n}{2}-1}(x^{\ast})\geq0, \,\,\,\,\,\, x^{\ast}\in\partial\Sigma^{\ast}_{\lambda}.} \\ \end{array}}\right.
\end{equation}
where $\varphi_{\lambda}(x^{\ast})$ is valued between $\overline{u}(x^{\ast}_{\lambda})$ and $\overline{u}(x^{\ast})$ by mean value theorem, and thus
\begin{equation}\label{3-57}
  \|\varphi_{\lambda}\|_{L^{\infty}(\overline{\Sigma^{\ast}_{\lambda}})}\leq\left(diam\,\Omega+R_{0}\right)^{n-2}\|u\|_{L^{\infty}(\overline{\Omega})}.
\end{equation}
Now, we will prove that there exists some $\delta>0$ sufficiently small (depending on $n$, $p$, $\|u\|_{L^{\infty}(\overline{\Omega})}$ and $\Omega$), such that
\begin{equation}\label{3-58}
  W^{\lambda}(x^{\ast})\geq 0 \quad\quad\, \text{in} \,\, \Sigma^{\ast}_{\lambda}
\end{equation}
for all $0<\lambda\leq\delta$. This provides a starting point to move the plane $T^{\ast}_{\lambda}$.

In fact, suppose on the contrary that there exists a $0<\lambda\leq\delta$ such that
\begin{equation}\label{3-59}
  W^{\lambda}(x^{\ast})<0 \quad\quad\, \text{somewhere in} \,\, \Sigma^{\ast}_{\lambda}.
\end{equation}
Let
\begin{equation}\label{3-60}
  \psi(x^{\ast}):=\cos\frac{(x^{\ast}-(x^{0})^{\ast})\cdot\nu^{0}}{\delta},
\end{equation}
then $\psi(x^{\ast})\in[\cos1,1]$ for any $x^{\ast}\in\Sigma^{\ast}_{\lambda}$ and $-\frac{\Delta\psi}{\psi}=\frac{1}{\delta^2}$. Define
\begin{equation}\label{3-61}
  \overline{W^{\lambda}}(x^{\ast}):=\frac{W^{\lambda}(x^{\ast})}{\psi(x^{\ast})} \quad\quad \text{and} \quad\quad \overline{W^{\lambda}_{i}}(x^{\ast}):=\frac{W^{\lambda}_{i}(x^{\ast})}{\psi(x^{\ast})}
\end{equation}
for $i=1,\cdots,\frac{n}{2}-1$ and $x^{\ast}\in\Sigma^{\ast}_{\lambda}$. Then there exists a $x^{\ast}_{0}\in\Sigma^{\ast}_{\lambda}$ such that
\begin{equation}\label{3-62}
  \overline{W^{\lambda}}(x^{\ast}_{0})=\min_{\overline{\Sigma^{\ast}_{\lambda}}}\overline{W^{\lambda}}(x^{\ast})<0.
\end{equation}
Since
\begin{equation}\label{3-63}
  -\Delta W^{\lambda}(x^{\ast}_{0})=-\Delta\overline{W^{\lambda}}(x^{\ast}_{0})\psi(x^{\ast}_{0})-2\nabla\overline{W^{\lambda}}(x^{\ast}_{0})\cdot\nabla\psi(x^{\ast}_{0})
  -\overline{W^{\lambda}}(x^{\ast}_{0})\Delta\psi(x^{\ast}_{0}),
\end{equation}
one immediately has
\begin{equation}\label{3-64}
  R^{4}_{0}\,W^{\lambda}_{1}(x^{\ast}_{0})\leq-\Delta W^{\lambda}(x^{\ast}_{0})\leq\frac{1}{\delta^2}W^{\lambda}(x^{\ast}_{0})<0.
\end{equation}
Thus there exists a $x^{\ast}_{1}\in\Sigma^{\ast}_{\lambda}$ such that
\begin{equation}\label{3-65}
  \overline{W^{\lambda}_{1}}(x^{\ast}_{1})=\min_{\overline{\Sigma^{\ast}_{\lambda}}}\overline{W^{\lambda}_{1}}(x^{\ast})<0.
\end{equation}
Similarly, it follows that
\begin{equation}\label{3-66}
  R^{4}_{0}\,W^{\lambda}_{2}(x^{\ast}_{1})\leq-\Delta W^{\lambda}_{1}(x^{\ast}_{1})\leq\frac{1}{\delta^2}W^{\lambda}_{1}(x^{\ast}_{1})<0.
\end{equation}
Continuing this way, we get $\{x^{\ast}_{i}\}_{i=1}^{\frac{n}{2}-1}\subset\Sigma^{\ast}_{\lambda}$ such that
\begin{equation}\label{3-67}
  \overline{W^{\lambda}_{i}}(x^{\ast}_{i})=\min_{\overline{\Sigma^{\ast}_{\lambda}}}\overline{W^{\lambda}_{i}}(x^{\ast})<0,
\end{equation}
\begin{equation}\label{3-68}
  R^{4}_{0}\,W^{\lambda}_{i+1}(x^{\ast}_{i})\leq-\Delta W^{\lambda}_{i}(x^{\ast}_{i})\leq\frac{1}{\delta^2}W^{\lambda}_{i}(x^{\ast}_{i})<0
\end{equation}
for $i=1,2,\cdots,\frac{n}{2}-2$, and
\begin{equation}\label{3-69}
  \overline{W^{\lambda}_{\frac{n}{2}-1}}(x^{\ast}_{\frac{n}{2}-1})=\min_{\overline{\Sigma^{\ast}_{\lambda}}}\overline{W^{\lambda}_{\frac{n}{2}-1}}(x^{\ast})<0,
\end{equation}
\begin{equation}\label{3-70}
  pR^{\tau}_{0}\varphi^{p-1}_{\lambda}(x^{\ast}_{\frac{n}{2}-1})W^{\lambda}(x^{\ast}_{\frac{n}{2}-1})\leq-\Delta W^{\lambda}_{\frac{n}{2}-1}(x^{\ast}_{\frac{n}{2}-1})\leq\frac{1}{\delta^2}W^{\lambda}_{\frac{n}{2}-1}(x^{\ast}_{\frac{n}{2}-1})<0.
\end{equation}
Therefore, we have
\begin{eqnarray}\label{3-71}
% \nonumber to remove numbering (before each equation)
  W^{\lambda}(x^{\ast}_{0}) &\geq& (\delta R^{2}_{0})^{2}W^{\lambda}_{1}(x^{\ast}_{0})\geq (\delta R^{2}_{0})^{2}W^{\lambda}_{1}(x^{\ast}_{1})\frac{\psi(x^{\ast}_{0})}{\psi(x^{\ast}_{1})} \\
 \nonumber &\geq& (\delta R^{2}_{0})^{4}W^{\lambda}_{2}(x^{\ast}_{1})\frac{\psi(x^{\ast}_{0})}{\psi(x^{\ast}_{1})}
 \geq(\delta R^{2}_{0})^{4}W^{\lambda}_{2}(x^{\ast}_{2})\frac{\psi(x^{\ast}_{0})}{\psi(x^{\ast}_{2})} \\
 \nonumber &\geq& (\delta R^{2}_{0})^{6}W^{\lambda}_{3}(x^{\ast}_{2})\frac{\psi(x^{\ast}_{0})}{\psi(x^{\ast}_{2})}
 \geq(\delta R^{2}_{0})^{6}W^{\lambda}_{3}(x^{\ast}_{3})\frac{\psi(x^{\ast}_{0})}{\psi(x^{\ast}_{3})} \\
 \nonumber  &\geq& \cdots\cdots\geq(\delta R^{2}_{0})^{n-2}W^{\lambda}_{\frac{n}{2}-1}(x^{\ast}_{\frac{n}{2}-1})\frac{\psi(x^{\ast}_{0})}{\psi(x^{\ast}_{\frac{n}{2}-1})} \\
 \nonumber  &\geq& p\delta^{n}R^{n+2-(p-2)(n-2)}_{0}\varphi^{p-1}_{\lambda}(x^{\ast}_{\frac{n}{2}-1})W^{\lambda}(x^{\ast}_{\frac{n}{2}-1})
 \frac{\psi(x^{\ast}_{0})}{\psi(x^{\ast}_{\frac{n}{2}-1})} \\
 \nonumber  &\geq& p\delta^{n}R^{n+2-(p-2)(n-2)}_{0}\left(diam\,\Omega+R_{0}\right)^{(p-1)(n-2)}\|u\|^{p-1}_{L^{\infty}(\overline{\Omega})}W^{\lambda}(x^{\ast}_{0}),
\end{eqnarray}
that means,
\begin{equation}\label{3-72}
  1\leq p\delta^{n}\left(diam\,\Omega+R_{0}\right)^{2n}\|u\|^{p-1}_{L^{\infty}(\overline{\Omega})},
\end{equation}
which is absurd if we choose $\delta>0$ small enough such that
\begin{equation}\label{3-73}
  0<\delta<\left(diam\,\Omega+R_{0}\right)^{-2}\left(p\|u\|^{p-1}_{L^{\infty}(\overline{\Omega})}\right)^{-\frac{1}{n}}.
\end{equation}
So far, we have proved that the plane $T^{\ast}_{\lambda}$ can be moved on up to $\lambda=\delta$.

Next, we will move the plane $T^{\ast}_{\lambda}$ further along the internal normal direction at $(x^{0})^{\ast}$ as long as the property
\begin{equation}\label{3-74}
  W^{\lambda}(x^{\ast})\geq0 \quad\quad\, \text{in} \,\, \Sigma^{\ast}_{\lambda}
\end{equation}
holds. Completely similar to the proof of \emph{Case i)}, one can actually show that the method of moving planes can be carried on (with the property \eqref{3-74}) as long as the reflection of $\overline{\Sigma^{\ast}_{\lambda}}$ is still contained in $\Omega^{\ast}$. We omit the details here.

Therefore, there exists a $\delta_{\ast}>0$ depending only on $x^{0}$ and $\Omega$ such that, $\overline{u}(x^{\ast})$ is monotone increasing along the internal normal direction in the region
\begin{equation}\label{3-75}
  \overline{\Sigma_{\delta_{\ast}}}:=\left\{x^{\ast}\in\overline{\Omega^{\ast}}\,|\,0\leq\left(x^{\ast}-(x^{0})^{\ast}\right)\cdot\nu^{0}\leq\delta_{\ast}\right\}.
\end{equation}
Since $\partial\Omega^{\ast}$ is $C^{n-2}$, there exists a small $0<\varepsilon_{1}<\min\{\frac{\delta_{\ast}}{8},\varepsilon_{0}\}$ depending on $x^{0}$ and $\Omega$ such that, for any $x^{\ast}\in B_{\varepsilon_{1}}\big((x^{0})^{\ast}\big)\cap\partial\Omega^{\ast}$, $\overline{u}(x^{\ast})$ is monotone increasing along the internal normal direction at $x^{\ast}$ in the region
\begin{equation}\label{3-76}
  \overline{\Sigma_{x^{\ast}}}:=\left\{z^{\ast}\in\overline{\Omega^{\ast}}\,\Big|\,0\leq(z^{\ast}-x^{\ast})\cdot\nu_{x^{\ast}}\leq\frac{3}{4}\delta_{\ast}\right\}.
\end{equation}
where $\nu_{x^{\ast}}$ denotes the unit internal normal vector at the point $x^{\ast}$ ($\nu_{(x^{0})^{\ast}}:=\nu^{0}$). Since $B_{\varepsilon_{1}}\big((x^{0})^{\ast}\big)\cap\partial\Omega^{\ast}$ is strictly convex, there exists a $\theta>0$ depending on $x^{0}$ and $\Omega$ such that
\begin{equation}\label{3-77}
  S:=\left\{\nu^{\ast}\in\mathbb{R}^n\,|\,|\nu^{\ast}|=1, \, \nu^{\ast}\cdot\nu^{0}\geq\cos\theta\right\}\subset\left\{\nu_{x^{\ast}}\,|\,x^{\ast}\in B_{\varepsilon_{1}}\big((x^{0})^{\ast}\big)\cap\partial\Omega^{\ast}\right\},
\end{equation}
and hence, it follows that, for any $x^{\ast}\in B_{\varepsilon_{1}}\big((x^{0})^{\ast}\big)\cap\partial\Omega^{\ast}$ and $\nu^{\ast}\in S$,
\begin{equation}\label{3-78}
  \overline{u}(x^{\ast}+s\nu^{\ast}) \quad \text{is monotone increasing with respect to} \,\,\, s\in\left[0,\frac{\delta_{\ast}}{2}\right].
\end{equation}
Now, let
\begin{equation}\label{3-79}
  D^{\ast}:=\left\{x^{\ast}+\varepsilon_{1}\nu^{0}\,|\,x^{\ast}\in B_{\varepsilon_{1}}\big((x^{0})^{\ast}\big)\cap\partial\Omega^{\ast}\right\},
\end{equation}
one immediately has
\begin{equation}\label{3-80}
  \max_{\overline{B_{\varepsilon_{1}}((x^{0})^{\ast})\cap\Omega^{\ast}}}\overline{u}(x^{\ast})\leq\max_{\overline{D^{\ast}}}\overline{u}(x^{\ast}).
\end{equation}
For any $x^{\ast}\in\overline{D^{\ast}}$, let
\begin{equation}\label{3-81}
  \overline{V_{x^{\ast}}}:=\left\{x^{\ast}+\nu^{\ast}\,\Big|\,\nu^{\ast}\cdot\nu^{0}\geq|\nu^{\ast}|\cos\theta, \, |\nu^{\ast}|\leq\frac{\delta_{\ast}}{4}\right\}
\end{equation}
be a piece of cone with vertex at $x^{\ast}$, then it is obvious that
\begin{equation}\label{3-82}
  \overline{u}(x^{\ast})=\min_{z^{\ast}\in\overline{V_{x^{\ast}}}}\overline{u}(z^{\ast}).
\end{equation}

Therefore, by \eqref{3-82} and Lemma \ref{lemma2}, we get, for any $x^{\ast}\in\overline{D^{\ast}}$,
\begin{eqnarray}\label{3-83}
% \nonumber to remove numbering (before each equation)
 && C(\lambda_{1},p,\Omega)\geq\int_{\Omega}u^{p}(x)\phi(x)dx \\
 \nonumber &=&\int_{\Omega^{\ast}}\frac{\overline{u}^{p}(x^{\ast})}{|x^{\ast}-\overline{x^{0}}|^{2n-p(n-2)}}
  \phi\left(\frac{x^{\ast}-\overline{x^{0}}}{|x^{\ast}-\overline{x^{0}}|^2}+\overline{x^{0}}\right)dx^{\ast} \\
 \nonumber &\geq& \int_{\overline{V_{x^{\ast}}}}\frac{\overline{u}^{p}(z^{\ast})}{|z^{\ast}-\overline{x^{0}}|^{2n-p(n-2)}}
  \phi\left(\frac{z^{\ast}-\overline{x^{0}}}{|z^{\ast}-\overline{x^{0}}|^2}+\overline{x^{0}}\right)dz^{\ast}\\
 \nonumber &\geq& \overline{u}^p(x^{\ast})R^{2n-p(n-2)}_{0}|\overline{V_{x^{\ast}}}|\cdot\min_{\overline{\Omega^{r_{1}/2}}}\phi
 =:\overline{u}^{p}(x^{\ast})\cdot C(n,p,x^{0},\Omega),
\end{eqnarray}
where $\overline{\Omega^{r_{1}/2}}:=\{x\in\Omega\,|\,dist(x,\partial\Omega)\geq \frac{r_{1}}{2}\}$ with $r_{1}=\varepsilon_{1}R^{2}_{0}$, and hence
\begin{equation}\label{3-84}
  \overline{u}(x^{\ast})\leq C(n,p,x^{0},\lambda_{1},\Omega), \quad\quad \forall x\in\overline{D^{\ast}}.
\end{equation}
As a consequence, we derive that
\begin{equation}\label{3-85}
  \max_{\overline{B_{\varepsilon_{1}}((x^{0})^{\ast})\cap\Omega^{\ast}}}\overline{u}(x^{\ast})\leq\max_{\overline{D^{\ast}}}\overline{u}(x^{\ast})\leq C(n,p,x^{0},\lambda_{1},\Omega).
\end{equation}
There exists a small $r_{0}>0$ depending only on $x^{0}$ and $\Omega$ such that, for each $x\in\overline{B_{r_{0}}(x^{0})\cap\Omega}$, one has $x^{\ast}\in\overline{B_{\varepsilon_{1}}\big((x^{0})^{\ast}\big)\cap\Omega^{\ast}}$. Therefore, \eqref{3-85} yields
\begin{eqnarray}\label{3-86}
% \nonumber to remove numbering (before each equation)
  \max_{\overline{B_{r_{0}}(x^{0})\cap\Omega}}u(x)&=&\max_{x\in\overline{B_{r_{0}}(x^{0})\cap\Omega}}|x^{\ast}-\overline{x^{0}}|^{n-2}\overline{u}(x^{\ast}) \\
  \nonumber &\leq& \frac{1}{R^{n-2}_{0}}\max_{\overline{B_{\varepsilon_{1}}((x^{0})^{\ast})\cap\Omega^{\ast}}}\overline{u}(x^{\ast})\leq C(n,p,x^{0},\lambda_{1},\Omega).
\end{eqnarray}

Since $x^{0}\in\partial\Omega$ is arbitrary and $\partial\Omega$ is compact, we can cover $\partial\Omega$ by finite balls $\{B_{r_{k}}(x^{k})\}_{k=0}^{K}$ with centers $\{x^{k}\}_{k=0}^{K}\subset\partial\Omega$ ($K$ depends only on $\Omega$). Therefore, there exists a $\bar{\delta}>0$ depending only on $\Omega$ such that
\begin{equation}\label{3-87}
  \|u\|_{L^{\infty}(\overline{\Omega}_{\bar{\delta}})}\leq\max_{0\leq k\leq K}\max_{\overline{B_{r_{k}}(x^{k})\cap\Omega}}u(x)\leq\max_{0\leq k\leq K}C(n,p,x^{k},\lambda_{1},\Omega)=:C(n,p,\lambda_{1},\Omega),
\end{equation}
where the boundary layer $\overline{\Omega}_{\bar{\delta}}:=\{x\in\overline{\Omega}\,|\,dist(x,\partial\Omega)\leq\bar{\delta}\}$. This completes the proof of boundary layer estimates under assumption ii).

This concludes our proof of Theorem \ref{Boundary}.
\end{proof}

\subsection{Blowing-up analysis and interior estimates}

In this subsection, we will obtain the interior estimates (and hence, global a priori estimates) via the blowing-up analysis arguments (for related literatures on blowing-up methods, please refer to \cite{BC,BM,CL3,CL4,CY0,Li,SZ1}).

Suppose on the contrary that Theorem \ref{Thm1} does not hold. By the boundary layer estimates (Theorem \ref{Boundary}), there exists a sequence of positive solutions $\{u_{k}\}\subset C^{n}(\Omega)\cap C^{n-2}(\overline{\Omega})$ to the critical order Navier problem \eqref{tNavier} and a sequence of interior points $\{x^{k}\}\subset\Omega\setminus\overline{\Omega}_{\bar{\delta}}$ such that
\begin{equation}\label{32-1}
  m_{k}:=u_{k}(x^{k})=\|u_{k}\|_{L^{\infty}(\overline{\Omega})}\rightarrow+\infty \quad \text{as} \,\, k\rightarrow\infty.
\end{equation}
For $x\in\Omega_{k}:=\{x\in\mathbb{R}^{n}\,|\,\lambda_{k}x+x^{k}\in\Omega\}$, we define
\begin{equation}\label{32-2}
  v_{k}(x):=\frac{1}{m_{k}}u_{k}(\lambda_{k}x+x^{k}) \quad \text{with} \,\, \lambda_{k}:=m_{k}^{\frac{1-p}{n}}\rightarrow0 \quad \text{as} \,\, k\rightarrow\infty.
\end{equation}
Then $v_{k}(x)$ satisfies $\|v_{k}\|_{L^{\infty}(\overline{\Omega_{k}})}=v_{k}(0)=1$ and
\begin{eqnarray}\label{32-3}
% \nonumber to remove numbering (before each equation)
  (-\Delta)^{\frac{n}{2}}v_{k}(x)&=&\frac{1}{m_{k}}\lambda^{n}_{k}(-\Delta)^{\frac{n}{2}}u_{k}(\lambda_{k}x+x^{k}) \\
 \nonumber &=& \frac{1}{m_{k}}\lambda^{n}_{k}\left(u^{p}_{k}(\lambda_{k}x+x^{k})+t\right)=v^{p}_{k}(x)+\frac{t}{m^{p}_{k}}
\end{eqnarray}
for any $x\in\Omega_{k}$. Since $dist(x^{k},\partial\Omega)>\bar{\delta}$, one has
\begin{equation}\label{32-4}
  \Omega_{k}\supset\left\{x\in\mathbb{R}^{n}\,|\,|\lambda_{k}x|\leq\bar{\delta}\right\}=\overline{B_{\frac{\bar{\delta}}{\lambda_{k}}}(0)},
\end{equation}
and hence
\begin{equation}\label{32-5}
  \Omega_{k}\rightarrow\mathbb{R}^{n} \quad \text{as} \,\, k\rightarrow\infty.
\end{equation}

For arbitrary $x^{0}\in\mathbb{R}^{n}$, there exists a $N_{1}>0$, such that $\overline{B_{1}(x^{0})}\subset\Omega_{k}$ for any $k\geq N_{1}$. By \eqref{32-3} and $\|v_{k}\|_{L^{\infty}(\overline{\Omega_{k}})}\leq1$, we can infer from regularity theory and Sobolev embedding that
\begin{equation}\label{32-6}
  \|v_{k}\|_{C^{n-1,\gamma}(\overline{B_{1}(0)})}\leq C(1+t),
\end{equation}
and further that
\begin{equation}\label{32-7}
   \|v_{k}\|_{C^{2(n-1),\gamma}(\overline{B_{1}(0)})}\leq C(1+t)
\end{equation}
for $k\geq N_{1}$, where $0\leq\gamma<1$. As a consequence, by Arzel\`{a}-Ascoli Theorem, there exists a subsequence $\{v^{(1)}_{k}\}\subset\{v_{k}\}$ and a function $v\in C^{n}(\overline{B_{1}(x^{0})})$ such that
\begin{equation}\label{32-8}
  v^{(1)}_{k}\rightrightarrows v \quad \text{and} \quad (-\Delta)^{\frac{n}{2}}v^{(1)}_{k}\rightrightarrows(-\Delta)^{\frac{n}{2}}v \quad\quad \text{in} \,\, \overline{B_{1}(x^{0})}.
\end{equation}
There also exists a $N_{2}>0$ such that $\overline{B_{2}(x^{0})}\subset\Omega_{k}$ for any $k\geq N_{2}$. By \eqref{32-3} and $\|v_{k}\|_{L^{\infty}(\overline{\Omega_{k}})}\leq1$, we can deduce that
\begin{equation}\label{32-9}
  \|v^{(1)}_{k}\|_{C^{2(n-1),\gamma}(\overline{B_{2}(0)})}\leq C(1+t)
\end{equation}
for $k\geq N_{2}$, where $0\leq\gamma<1$. Therefore, by Arzel\`{a}-Ascoli Theorem again, there exists a subsequence $\{v^{(2)}_{k}\}\subset\{v^{(1)}_{k}\}$ and $v\in C^{n}(\overline{B_{2}(x^{0})})$ such that
\begin{equation}\label{32-10}
  v^{(2)}_{k}\rightrightarrows v \quad \text{and} \quad (-\Delta)^{\frac{n}{2}}v^{(2)}_{k}\rightrightarrows(-\Delta)^{\frac{n}{2}}v \quad\quad \text{in} \,\, \overline{B_{2}(x^{0})}.
\end{equation}
Continuing this way, for any $j\in\mathbb{N}^{+}$, we can extract a subsequence $\{v^{(j)}_{k}\}\subset\{v^{(j-1)}_{k}\}$ and find a function $v\in C^{n}(\overline{B_{j}(x^{0})})$ such that
\begin{equation}\label{32-11}
  v^{(j)}_{k}\rightrightarrows v \quad \text{and} \quad (-\Delta)^{\frac{n}{2}}v^{(j)}_{k}\rightrightarrows(-\Delta)^{\frac{n}{2}}v \quad\quad \text{in} \,\, \overline{B_{j}(x^{0})}.
\end{equation}
By extracting the diagonal sequence, we finally obtain that the subsequence $\{v^{(k)}_{k}\}$ satisfies
\begin{equation}\label{32-12}
  v^{(k)}_{k}\rightrightarrows v \quad \text{and} \quad (-\Delta)^{\frac{n}{2}}v^{(k)}_{k}\rightrightarrows(-\Delta)^{\frac{n}{2}}v \quad\quad \text{in} \,\, \overline{B_{j}(x^{0})}
\end{equation}
for any $j\geq1$. Therefore, we get from \eqref{32-3} that $0\leq v\in C^{n}(\mathbb{R}^{n})$ satisfies
\begin{equation}\label{32-13}
  (-\Delta)^{\frac{n}{2}}v(x)=v^{p}(x) \quad\quad \text{in} \,\, \mathbb{R}^{n}.
\end{equation}
By the Liouville theorem (Theorem \ref{Thm0}), we must have $v\equiv0$ in $\mathbb{R}^{n}$, which is a contradiction with
\begin{equation}\label{32-14}
  v(0)=\lim_{k\rightarrow\infty}v^{(k)}_{k}(0)=1.
\end{equation}

This concludes our proof of Theorem \ref{Thm1}.

\section{Proof of Theorem \ref{Thm2}}

In this section, by applying the a priori estimates (Theorem \ref{Thm1}) and the following Leray-Schauder fixed point theorem (see e.g. \cite{CLM}), we will prove the existence of positive solutions to the critical order Lane-Emden equations \eqref{Navier} with Navier boundary conditions.
\begin{thm}\label{L-S}
Suppose that $X$ is a real Banach space with a closed positive cone $P$, $U\subset P$ is bounded open and contains $0$. Assume that there exists $\rho>0$ such that $B_{\rho}(0)\cap P\subset U$ and that $K:\,\overline{U}\rightarrow P$ is compact and satisfies
\vskip 5pt
\noindent i) For any $x\in P$ with $|x|=\rho$ and any $\lambda\in[0,1)$, $x\neq\lambda Kx$;
\vskip 3pt
\noindent ii) There exists some $y\in P\setminus\{0\}$ such that $x-Kx\neq ty$ for any $t\geq0$ and $x\in\partial U$.
\vskip 5pt
\noindent Then, $K$ possesses a fixed point in $\overline{U_{\rho}}$, where $U_{\rho}:=U\setminus B_{\rho}(0)$.
\end{thm}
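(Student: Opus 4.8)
The plan is to deduce the conclusion from degree theory for compact maps on the positive cone $P$. Recall that for a closed cone $P$ in a Banach space $X$, a bounded relatively open subset $V\subset P$, and a compact map $F:\overline V\to P$ having no fixed point on the boundary $\partial_P V$ of $V$ relative to $P$, the fixed point index $i(F,V,P)\in\mathbb Z$ is well defined and enjoys the standard properties: normalization ($i(F,V,P)=1$ whenever $F$ is constant equal to some point of $V$), homotopy invariance along homotopies that remain compact, stay valued in $P$, and are fixed-point-free on $\partial_P V$ throughout, additivity (if $V_1,V_2$ are disjoint relatively open subsets of $V$ and $F$ has no fixed point in $\overline V\setminus(V_1\cup V_2)$, then $i(F,V,P)=i(F,V_1,P)+i(F,V_2,P)$), and the solution property (if $i(F,V,P)\neq0$ then $F$ has a fixed point in $V$). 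These facts I would simply quote from the standard monographs on nonlinear functional analysis; everything else is bookkeeping.

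First I would dispose of a trivial case: if $K$ has a fixed point $x$ with $|x|=\rho$, then $x\in\overline{U_\rho}$ and there is nothing more to prove. So assume from now on that $Kx\neq x$ for every $x\in P$ with $|x|=\rho$; together with assumption i) this yields $x\neq\lambda Kx$ for all $\lambda\in[0,1]$ and all $x\in P$ with $|x|=\rho$. Since $P$ is a cone the homotopy $(\lambda,x)\mapsto\lambda Kx$ is valued in $P$, and it is fixed-point-free on $\{x\in P:|x|=\rho\}$, which contains the relative boundary of $B_\rho(0)\cap P$; note that $B_\rho(0)\cap P$ is relatively open, bounded, and its closure lies in $\overline U$ because $B_\rho(0)\cap P\subset U$. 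Homotopy invariance and normalization therefore give $i(K,B_\rho(0)\cap P,P)=i(0,B_\rho(0)\cap P,P)=1$.

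Next I would compute $i(K,U,P)=0$ using assumption ii). Fix $y\in P\setminus\{0\}$ as in ii). Since $\overline U$ is bounded and $K$ is compact, $M:=\sup_{x\in\overline U}\|Kx\|<\infty$; put $R:=\sup_{x\in\overline U}\|x\|<\infty$ and choose $T>(R+M)/\|y\|$. Consider the homotopy $(t,x)\mapsto Kx+ty$, $t\in[0,T]$, which is compact and valued in $P$; by ii) it is fixed-point-free on $\partial U$ for every $t\in[0,T]$ (the case $t=0$ being precisely the statement, contained in ii), that $K$ has no fixed point on $\partial U$). Hence $i(K,U,P)=i(K+Ty,U,P)$. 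But for any $x\in\overline U$ one has $\|Kx+Ty\|\geq T\|y\|-M>R\geq\|x\|$, so $K+Ty$ has no fixed point in $\overline U$ and $i(K+Ty,U,P)=0$; therefore $i(K,U,P)=0$. Now $B_\rho(0)\cap P$ and $U_\rho=U\setminus\overline{B_\rho(0)}$ are disjoint relatively open subsets of $U$, and a point of $\overline U$ lying in neither of them is either a point of $\partial U$ or a point $x\in U$ with $|x|=\rho$ — at none of which does $K$ have a fixed point, by ii) and the reduction above. Additivity then gives $0=i(K,U,P)=i(K,B_\rho(0)\cap P,P)+i(K,U_\rho,P)=1+i(K,U_\rho,P)$, so $i(K,U_\rho,P)=-1\neq0$, and the solution property produces a fixed point of $K$ in $U_\rho\subset\overline{U_\rho}$.

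The argument is routine once the fixed point index on the cone is available; the only places demanding care are verifying that the two homotopies are admissible (keeping them valued in $P$, keeping them compact, and checking fixed-point-freeness on the relevant relative boundaries) and identifying exactly the set $\overline U\setminus\big[(B_\rho(0)\cap P)\cup U_\rho\big]$ so that additivity applies. If one wished to avoid cone-index theory, an equivalent route is to extend $K$ to a compact self-map of $X$ valued in $P$ via Dugundji's extension theorem, choose a retraction of $X$ onto the closed convex set $P$, and run the same three computations with the classical Leray--Schauder degree; the logic is identical, only the notation heavier.
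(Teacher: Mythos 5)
Your proof is correct, but it cannot be compared line-by-line with the paper, because the paper does not prove Theorem \ref{L-S} at all: it is quoted as a known result (``see e.g.\ \cite{CLM}'') and used as a black box in Section 4. What you supply is essentially the standard proof of this cone version of the Leray--Schauder theorem via the fixed point index for compact maps on a closed cone: index $1$ on $B_{\rho}(0)\cap P$ from the admissible homotopy $\lambda Kx$ (hypothesis i) plus the harmless reduction to the case of no fixed point on $\{|x|=\rho\}\cap P$), index $0$ on $U$ from the homotopy $Kx+ty$ with $t\in[0,T]$, $T$ large (hypothesis ii)), and then additivity forcing index $-1$ on $U\setminus\overline{B_{\rho}(0)}$, hence a fixed point there. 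The bookkeeping is done correctly: the relative boundaries are identified properly, the set $\overline{U}\setminus\big[(B_{\rho}(0)\cap P)\cup(U\setminus\overline{B_{\rho}(0)})\big]$ is exactly $\partial U$ together with the points of $U$ of norm $\rho$, and fixed-point-freeness there is exactly what ii) and your reduction give. Three small points deserve explicit mention. First, in your ``trivial case'' the inclusion $x\in\overline{U_{\rho}}$ for a fixed point with $|x|=\rho$ is not automatic from the geometry (the hypotheses do not force $\overline{B_{\rho}(0)}\cap P\subset U$); you should note that ii) with $t=0$ rules out fixed points on $\partial U$, so such an $x$ lies in $U$ and hence in $U_{\rho}$. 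Second, the admissibility of the homotopy $Kx+ty$ uses that $P$ is closed under addition, i.e.\ the convexity built into the standard notion of a positive cone; this is implicit in the statement but worth saying. Third, $Kx$ is defined for every $x\in P$ with $|x|=\rho$ because such $x$ lies in $\overline{B_{\rho}(0)\cap P}\subset\overline{U}$ (approximate by $t x$, $t\uparrow1$). With these remarks your argument is complete, and it has the advantage over the paper's citation of making visible exactly where each hypothesis enters; the paper's choice simply keeps the exposition short by outsourcing the functional-analytic input to \cite{CLM}.
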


Now we let
\begin{equation}\label{4-1}
  X:=C^{0}(\overline{\Omega}) \quad\quad \text{and} \quad\quad P:=\{u\in X \,|\, u\geq0\}.
\end{equation}
Define
\begin{equation}\label{4-2}
  K(u)(x):=\int_{\Omega}G_{2}(x,y^{\frac{n}{2}})\int_{\Omega}G_{2}(y^{\frac{n}{2}},y^{\frac{n}{2}-1})\int_{\Omega}\cdots\int_{\Omega}G_{2}(y^{2},y^{1})u^{p}(y^{1})
  dy^{1}dy^{2}\cdots dy^{\frac{n}{2}},
\end{equation}
where $G_{2}(x,y)$ is the Green's function for $-\Delta$ with Dirichlet boundary condition in $\Omega$. Suppose $u\in C^{0}(\overline{\Omega})$ is a fixed point of $K$, i.e., $u=Ku$, then it is easy to see that $u\in C^{n}(\Omega)\cap C^{n-2}(\overline{\Omega})$ and satisfies the Navier problem
\begin{equation}\label{4-3}\\\begin{cases}
(-\Delta)^{\frac{n}{2}}u(x)=u^{p}(x) \,\,\,\,\,\,\,\,\,\, \text{in} \,\,\, \Omega, \\
u(x)=-\Delta u(x)=\cdots=(-\Delta)^{\frac{n}{2}-1}u(x)=0 \,\,\,\,\,\,\,\, \text{on} \,\,\, \partial\Omega.
\end{cases}\end{equation}

Our goal is to show the existence of a fixed point for $K$ in $P\setminus B_{\rho}(0)$ for some $\rho>0$ (to be determined later) by using Theorem \ref{L-S}. To this end, we need to verify the two conditions i) and ii) in Theorem \ref{L-S} separately.

\emph{i)} First, we show that there exists $\rho>0$ such that for any $u\in\partial B_{\rho}(0)\cap P$ and $0\leq\lambda<1$,
\begin{equation}\label{4-4}
  u-\lambda K(u)\neq0.
\end{equation}
For any $x\in\overline{\Omega}$, it holds that
\begin{eqnarray}\label{4-5}
% \nonumber to remove numbering (before each equation)
 \nonumber |K(u)(x)|&=&\left|\int_{\Omega}G_{2}(x,y^{\frac{n}{2}})\int_{\Omega}G_{2}(y^{\frac{n}{2}},y^{\frac{n}{2}-1})\cdots\int_{\Omega}G_{2}(y^{2},y^{1})u^{p}(y^{1})
  dy^{1}\cdots dy^{\frac{n}{2}}\right| \\
  &\leq& \int_{\Omega}G_{2}(x,y^{\frac{n}{2}})\int_{\Omega}G_{2}(y^{\frac{n}{2}},y^{\frac{n}{2}-1})\cdots\int_{\Omega}G_{2}(y^{2},y^{1})dy^{1}\cdots dy^{\frac{n}{2}}\cdot\|u\|^{p}_{C^{0}(\overline{\Omega})} \\
 \nonumber &\leq& \rho^{p-1}\left\|\int_{\Omega}G_{2}(x,y)dy\right\|^{\frac{n}{2}}_{C^{0}(\overline{\Omega})}\cdot\|u\|_{C^{0}(\overline{\Omega})}.
\end{eqnarray}
Let $h(x):=\int_{\Omega}G_{2}(x,y)dy$, then it solves
\begin{equation}\label{4-6}\\\begin{cases}
-\Delta_{x}h(x)=1 \,\,\,\,\,\,\,\,\,\, \text{in} \,\,\, \Omega, \\
h(x)=0 \,\,\,\,\,\,\,\, \text{on} \,\,\, \partial\Omega.
\end{cases}\end{equation}
For a fixed point $x^{0}\in\Omega$, we define the function
\begin{equation}\label{4-7}
  \zeta(x):=\frac{(diam\,\Omega)^{2}}{2n}\left(1-\frac{|x-x^{0}|^{2}}{(diam\,\Omega)^{2}}\right)_{+},
\end{equation}
then it satisfies
\begin{equation}\label{4-8}\\\begin{cases}
-\Delta_{x}\zeta(x)=1 \,\,\,\,\,\,\,\,\,\, \text{in} \,\,\, \Omega, \\
\zeta(x)>0 \,\,\,\,\,\,\,\, \text{on} \,\,\, \partial\Omega.
\end{cases}\end{equation}
By maximum principle, we get
\begin{equation}\label{4-9}
  0\leq h(x)<\zeta(x)\leq\frac{(diam\,\Omega)^{2}}{2n}, \quad\quad \forall \,\, x\in\overline{\Omega}.
\end{equation}
Therefore, we infer from \eqref{4-5} and \eqref{4-9} that
\begin{equation}\label{4-10}
  \|K(u)\|_{C^{0}(\overline{\Omega})}<\rho^{p-1}\frac{(diam\,\Omega)^{n}}{(2n)^{\frac{n}{2}}}\|u\|_{C^{0}(\overline{\Omega})}=\|u\|_{C^{0}(\overline{\Omega})}
\end{equation}
if we take
\begin{equation}\label{4-11}
  \rho=\left(\frac{\sqrt{2n}}{diam\,\Omega}\right)^{\frac{n}{p-1}}>0.
\end{equation}
This implies that $u\neq\lambda K(u)$ for any $u\in\partial B_{\rho}(0)\cap P$ and $0\leq\lambda<1$.

\emph{ii)} Now, let $\eta\in C^{n}(\Omega)\cap C^{n-2}(\overline{\Omega})$ be the unique positive solution of
\begin{equation}\label{4-12}\\\begin{cases}
(-\Delta)^{\frac{n}{2}}\eta(x)=1, \,\,\,\,\,\,\,\,\,\, \,\,\, x\in\Omega, \\
\eta(x)=-\Delta\eta(x)=\cdots=(-\Delta)^{\frac{n}{2}-1}\eta(x)=0, \,\,\,\,\,\,\,\,\,\,\,\, x\in\partial\Omega.
\end{cases}\end{equation}
We will show that
\begin{equation}\label{4-13}
  u-K(u)\neq t\eta \quad\quad \forall \,\, t\geq0, \quad \forall u\in\partial U,
\end{equation}
where $U:=B_{R}(0)\cap P$ with sufficiently large $R>\rho$ (to be determined later). First, observe that for any $u\in\overline{U}$,
\begin{equation}\label{4-14}
  \left\|(-\Delta)^{\frac{n}{2}}K(u)\right\|_{C^{0}(\overline{\Omega})}=\|u\|^{p}_{C^{0}(\overline{\Omega})}\leq R^{p},
\end{equation}
and hence
\begin{equation}\label{4-15}
  \|K(u)\|_{C^{0,\gamma}(\Omega)}\leq CR^{p} \quad\quad \forall \,\, 0<\gamma<1,
\end{equation}
thus $K:\, \overline{U}\rightarrow P$ is compact.

We use contradiction arguments to prove \eqref{4-13}. Suppose on the contrary that, there exists some $u\in\partial U$ and $t\geq0$ such that
\begin{equation}\label{4-16}
  u-K(u)=t\eta,
\end{equation}
then one has $\|u\|_{C^{0}(\overline{\Omega})}=R>\rho>0$, $u\in C^{n}(\Omega)\cap C^{n-2}(\overline{\Omega})$ and satisfies the Navier problem
\begin{equation}\label{4-17}\\\begin{cases}
(-\Delta)^{\frac{n}{2}}u(x)=u^{p}(x)+t, \,\,\,\,\,\,\,\,\, u(x)>0, \,\,\,\,\,\,\,\,\,\, x\in\Omega, \\
u(x)=-\Delta u(x)=\cdots=(-\Delta)^{\frac{n}{2}-1}u(x)=0, \,\,\,\,\,\,\,\,\,\,\,\, x\in\partial\Omega.
\end{cases}\end{equation}
Choose a constant $C_{1}>\lambda_{1}$. Since $u(x)>0$ in $\Omega$ and $p>1$, it is easy to see that, there exists another constant $C_{2}>0$ (e.g., take $C_{2}=C_{1}^{\frac{p}{p-1}}$), such that
\begin{equation}\label{4-18}
  u^{p}(x)\geq C_{1}u(x)-C_{2}.
\end{equation}
If $t\geq C_{2}$, then we have
\begin{equation}\label{4-19}
  (-\Delta)^{\frac{n}{2}}u(x)=u^{p}(x)+t\geq C_{1}u(x)-C_{2}+t\geq C_{1}u(x) \quad\quad \text{in} \,\, \Omega.
\end{equation}
Multiplying both side of \eqref{4-19} by the eigenfunction $\phi(x)$, and integrating by parts yield
\begin{eqnarray}\label{4-20}
  C_{1}\int_{\Omega}u(x)\phi(x)dx&\leq&\int_{\Omega}(-\Delta)^{\frac{n}{2}}u(x)\cdot\phi(x)dx=\int_{\Omega}u(x)\cdot(-\Delta)^{\frac{n}{2}}\phi(x)dx \\
 \nonumber &=&\lambda_{1}\int_{\Omega}u(x)\phi(x)dx,
\end{eqnarray}
and hence
\begin{equation}\label{4-21}
  0<(C_{1}-\lambda_{1})\int_{\Omega}u(x)\phi(x)dx\leq0,
\end{equation}
which is absurd. Thus, we must have $0\leq t<C_{2}$. By the a priori estimates (Theorem \ref{Thm1}), we know that
\begin{equation}\label{4-22}
  \|u\|_{L^{\infty}(\overline{\Omega})}\leq C(n,p,t,\lambda_{1},\Omega).
\end{equation}
We will show that the above a priori estimates are uniform with respect to $0\leq t<C_{2}$, i.e., for $0\leq t<C_{2}$,
\begin{equation}\label{4-23}
  \|u\|_{L^{\infty}(\overline{\Omega})}\leq C(n,p,C_{2},\lambda_{1},\Omega)=:C_{0}.
\end{equation}
Indeed, it is clear from Theorem \ref{Boundary} that, the thickness $\bar{\delta}$ of the boundary layer and the boundary layer estimates are uniform with respect to $t$. Therefore, if \eqref{4-23} does not hold, there exist sequences $\{t_{k}\}\subset[0,C_{2})$, $\{x^{k}\}\subset\Omega\setminus\overline{\Omega}_{\bar{\delta}}$ and $\{u_{k}\}$ satisfying
\begin{equation}\label{4-24}\\\begin{cases}
(-\Delta)^{\frac{n}{2}}u_{k}(x)=u_{k}^{p}(x)+t_{k}, \,\,\,\,\,\,\,\,\,\, \,\,\, x\in\Omega, \\
u_{k}(x)=-\Delta u_{k}(x)=\cdots=(-\Delta)^{\frac{n}{2}-1}u_{k}(x)=0, \,\,\,\,\,\,\,\,\,\,\,\, x\in\partial\Omega,
\end{cases}\end{equation}
but $m_{k}:=u_{k}(x^{k})=\|u_{k}\|_{L^{\infty}(\overline{\Omega})}\rightarrow+\infty$ as $k\rightarrow\infty$. For $x\in\Omega_{k}:=\{x\in\mathbb{R}^{n}\,|\,\lambda_{k}x+x^{k}\in\Omega\}$, we define $v_{k}(x):=\frac{1}{m_{k}}u_{k}(\lambda_{k}x+x^{k})$ with $\lambda_{k}:=m^{\frac{1-p}{n}}_{k}\rightarrow0$ as $k\rightarrow\infty$. Then $v_{k}(x)$ satisfies $\|v_{k}\|_{L^{\infty}(\overline{\Omega_{k}})}=v_{k}(0)=1$ and
\begin{equation}\label{4-25}
(-\Delta)^{\frac{n}{2}}v_{k}(x)=v^{p}_{k}(x)+\frac{t_{k}}{m^{p}_{k}}
\end{equation}
for any $x\in\Omega_{k}$. Since $0\leq t<C_{2}$ and $m_{k}\rightarrow+\infty$, by completely similar blowing-up methods as in the proof of Theorem \ref{Thm1} in subsection 3.2, we can also derive a subsequence $\{v^{(k)}_{k}\}\subset\{v_{k}\}$ and a function $v\in C^{n}(\mathbb{R}^{n})$ such that
\begin{equation}\label{4-26}
  v^{(k)}_{k}\rightrightarrows v \quad\quad \text{and} \quad\quad (-\Delta)^{\frac{n}{2}}v^{(k)}_{k}\rightrightarrows(-\Delta)^{\frac{n}{2}}v \quad\quad \text{in} \,\, \overline{B_{j}(x^{0})}
\end{equation}
for arbitrary $j\geq1$, and hence $0\leq v\in C^{n}(\mathbb{R}^{n})$ solves
\begin{equation}\label{4-27}
  (-\Delta)^{\frac{n}{2}}v(x)=v^{p}(x) \quad\quad \text{in} \,\, \mathbb{R}^{n}.
\end{equation}
By Theorem \ref{Thm0}, one immediately has $v\equiv0$, which contradicts with $v(0)=1$. Therefore, the uniform estimates \eqref{4-23} must hold.

Now we let $R:=C_{0}+\rho$ and $U:=B_{C_{0}+\rho}(0)\cap P$, then \eqref{4-23} implies
\begin{equation}\label{4-28}
  \|u\|_{L^{\infty}(\overline{\Omega})}\leq C_{0}<C_{0}+\rho,
\end{equation}
which contradicts with $u\in\partial U$. This implies that
\begin{equation}\label{4-29}
  u-K(u)\neq t\eta
\end{equation}
for any $t\geq0$ and $u\in\partial U$ with $U=B_{C_{0}+\rho}(0)\cap P$.

From Theorem \ref{L-S}, we deduce that there exists a $u\in\overline{\big(B_{C_{0}+\rho}(0)\cap P\big)\setminus B_{\rho}(0)}$ satisfies
\begin{equation}\label{4-30}
  u=K(u),
\end{equation}
and hence $\rho\leq\|u\|_{L^{\infty}(\overline{\Omega})}\leq C_{0}+\rho$ solves the critical order Navier problem
\begin{equation}\label{4-31}\\\begin{cases}
(-\Delta)^{\frac{n}{2}}u(x)=u^{p}(x), \,\,\,\,\,\,\, u(x)>0, \,\,\,\,\,\,\,\,\, x\in\Omega, \\
u(x)=-\Delta u(x)=\cdots=(-\Delta)^{\frac{n}{2}-1}u(x)=0, \,\,\,\,\,\,\,\,\,\,\,\, x\in\partial\Omega.
\end{cases}\end{equation}
By regularity theory, we can see that $u\in C^{n}(\Omega)\cap C^{n-2}(\overline{\Omega})$.

This concludes our proof of Theorem \ref{Thm2}.

\end{document}